\newcommand{\Eta}{\mathrm{H}}
\newtheorem{theorem}{Theorem}[section]
\newtheorem*{theorem*}{Theorem}
\newtheorem*{lemma*}{Lemma}
\newtheorem*{fact*}{Fact}
\newtheorem*{conjecture*}{Szpiro's conjecture}
\newtheorem{proposition}[theorem]{Proposition}
\newtheorem{lemma}[theorem]{Lemma}
\newtheorem{corollary}[theorem]{Corollary}
\newtheorem{conjecture}[theorem]{Conjecture}
\newtheorem{lemdef}[theorem]{Lemma-Definition}
\newtheorem*{maintheorem}{Main theorem}
\theoremstyle{remark}
\newtheorem{remark}[theorem]{Remark}
\numberwithin{equation}{section}
\begin{document}
\title{Ramification of Tate modules for rank 2 Drinfeld modules}
\author{Takuya Asayama and Maozhou Huang\footnote{\texttt{huang.maozhou@outlook.com}}}

\maketitle

\unmarkedfntext{\text{2020\it{  Mathematics Subject Classification}}. 11G09, 11S15, 11S20, 11F80.}

\unmarkedfntext{\text{\it{Key words and phrases}}: Drinfeld modules, Herbrand functions, higher ramification subgroups, conductors, Szpiro's conjecture.}

\begin{abstract}
%Mathematical symbols and formulas should not be used in the abstract.
In this paper, we study the ramification of extensions of a function field generated by division points of rank 2 Drinfeld modules.
Also conductors of certain rank 2 Drinfeld modules are defined as analogues of those for elliptic curves. 
A calculation of these conductors allows us to show an analogue of Szpiro's conjecture under a certain limited situation.
\end{abstract}

\section{Introduction} %Introductionに見出し番号を付ける場合は*をとる
%Acknowledgments may be included at the end of the Introduction.

\subsection{The Galois representation of the wild ramification subgroup on the division points}\label{s11}

Let us first introduce the notation used throughout this paper.
Put $\Bbb{F}_{q}[t]$ to be the polynomial ring in $t$ over the field $\Bbb{F}_{q}$ of order $q=p^{s}$ which is a power of a rational prime $p.$
Let $F$ be a finite extension of the fraction field $\Bbb{F}_{q}(t)$ of $\Bbb{F}_{q}[t].$ 
Let $\phi$ be a rank $2$ Drinfeld $\Bbb{F}_{q}[t]$-module over some extension $L$ over $F.$ 
It is characterized by the polynomial $\phi_{t}(X)=tX+a_{1}X^{q}+a_{2}X^{q^{2}}\in L[X].$
Let $\bm{j}=\bm{j}(\phi)$ denote the $j$-invariant $a_{1}^{q+1}/a_{2}$ of $\phi.$ 

Let $v$ be a prime of $F$ and let it also denote the valuation associated to $v$ normalized so that $v(F^{\times})=\Bbb{Z}.$ 
Let $K$ be the completion of $F$ at $v.$ 
In this paper, $v$ is uniquely extended to the fixed separable closure $K^{\text{sep}}/K$ even if nothing is mentioned.
For a finite separable extension $L/K,$ let $v_{L}\coloneqq e_{L/K}\cdot v$ be the normalized valuation of $L,$ where $e_{L/K}$ is the ramification index of $L/K.$ 
We have $v_{L}(L^{\times})=\Bbb{Z}.$
An infinite prime of $F$ is a prime of $F$ lying above the prime $(1/t)$ of $\Bbb{F}_{q}(t).$
A finite prime of $F$ is a prime which is not infinite. 
For an element $a\in \Bbb{F}_{q}[t],$ let $\phi[a]$ denote the set of the roots of $\phi_{a}(X).$
We denote the Galois group of a Galois extension $L/K$ by $G(L/K).$ We denote the cardinality of a set $S$ by $\#S.$

Let $\pi\in \Bbb{F}_{q}[t]$ be a degree $1$ polynomial and $n$ a positive integer.
In this paper, we first investigate the structure of the wild ramification subgroup of the extension $K(\phi[\pi^{n}])/K$ and how the elements of this group act on $K(\phi[\pi^{n}]).$ Our main result is 
\begin{maintheorem}[Theorems~\ref{p2211}~(2) and \ref{p2221}~(2)]
Let $v$ be a prime of $F$ and $n$ a positive integer.
Let $\phi$ be a Drinfeld $\Bbb{F}_{q}[t]$-module of rank $2$ over $K.$
Let $\pi$ be a degree $1$ polynomial in $\Bbb{F}_{q}[t]$ such that $v\nmid \pi.$ 
Assume
\begin{equation}
    \begin{cases}
    \begin{split}
    &p\nmid v(\bm{j})\text{ and }v(\bm{j})<v(\pi)q\text{ if }v\text{ is infinite}\text{\rm{;}}
    \end{split}\\
    \begin{split}&q\neq 2,\,\,\,p\nmid v(\bm{j}),\text{ and }v(\bm{j})<0\text{ if }v\text{ is finite.}\end{split}
    \end{cases}\nonumber
\end{equation}
Under this assumption, there exists \text{\rm{(}}see Propositions~\text{\rm{\ref{p2111}}} and \text{\rm{\ref{p2121}~(1)}}\text{\rm{)}} an $\Bbb{F}_{q}$-basis $\\\{\xi_{-n},\ldots, \xi_{-1},\xi_{n},\ldots, \xi_{1}\}$ of $\phi[\pi^{n}]$ such that\text{\rm{:}}
\begin{itemize}
\item $\xi_{-1}$ and $\xi_{1}$ are nonzero roots of $\phi_{\pi}(X)$ 
such that $v(\xi_{-1})>v(\xi_{1});$ 

\item each $\xi_{-i}$ \text{\rm{(}}resp.\ $\xi_{i}$\text{\rm{)}} is a root of $\phi_{\pi}(X)-\xi_{-(i-1)}$ \text{\rm{(}}resp.\ $\phi_{\pi}(X)-\xi_{i-1}$\text{\rm{)}} with the largest valuation among the valuations of all roots of $\phi_{\pi}(X)-\xi_{-(i-1)}$ \text{\rm{(}}resp.\ $\phi_{\pi}(X)-\xi_{i-1}$\text{\rm{)}}.
\end{itemize}
\begin{enumerate}[\rm{(}1)]
    \item 
Assume that $v$ is infinite. 
Let $m$ be the integer such that $v(\bm{j})\in (v(\pi)q^{m+1},v(\pi)q^{m}).$ 
For $n\geq m,$ the wild ramification subgroup of $K(\phi[\pi^{n}])/K$ is identified with that of $K(\phi[\pi^{m}])/K$ and is isomorphic to $(\Bbb{Z}/p\Bbb{Z})^{sm}.$ 
It is generated by $\sigma_{l,u}$ for $l=1,\ldots,m$ and $u\in\Bbb{F}_{q}$ such that each $\sigma_{l,u}$ acts on the basis $\{\xi_{-m},\ldots,\xi_{-1},\xi_{m},\ldots,\xi_{1}\}$ of $\phi[\pi^{m}]$ via the matrix with coefficients in $\Bbb{F}_{q}$
\[\begin{pmatrix}I_{m}&0\\
u\cdot A_{m,l}&I_{m}
\end{pmatrix},
\]
where $I_{m}$ denotes the $m\times m$ identity matrix and $A_{m,l}$ is the $m\times m$ matrix defined by $(\delta_{i,j-l+1})_{ij}$ with the Kronecker delta $\delta.$
\item
Assume that $v$ is finite. 
For any $n,$ the wild ramification subgroup of $K(\phi[\pi^{n}])/K$ is isomorphic to $(\Bbb{Z}/p\Bbb{Z})^{sn}.$ 
It is generated by $\sigma_{l,u}$ for $l=1,\ldots,n$ and $u\in \Bbb{F}_{q}$ such that each $\sigma_{l,u}$ acts on the basis $\{\xi_{-n},\ldots,\xi_{-1},\xi_{n},\ldots,\xi_{1}\}$ of $\phi[\pi^{n}]$ via the matrix with coefficients in $\Bbb{F}_{q}$
\[\begin{pmatrix}I_{n}&0\\
u\cdot A_{n,l}&I_{n}
\end{pmatrix}
\]
for $A_{n,l}$ and $I_{n}$ defined as in \text{\rm{(1)}}.
\end{enumerate}
\end{maintheorem}

We remark that this theorem heavily relies on the valuations of elements in $\phi[\pi^{n}]$ for any positive integer $n.$
These valuations are studied in Section~\ref{s21} if the degree of $\pi$ is $1.$
The second author has generalized these results when the degree of $\pi\geq 2.$
However, there are still difficulties in generalizing the results on the ramification of $\phi[\pi^{n}].$ 
We do not pursue the general case in the present paper, but we hope to study this question further in the future.

\subsection{Review on conductors of elliptic curves}\label{s12}
Before we turn to the application of Main theorem, let us recall some facts about the theory of elliptic curves in this subsection.

Let $E$ be an elliptic curve over a local number field $K$ of residue characteristic $p>0$. 
For a prime number $\ell\nmid p,$ let $E[\ell]$ denote the $\Bbb{F}_{\ell}$-vector space of the $\ell$-division points of $E.$
Let $G_{i}$ (resp.\ $G^{y}$) denote the $i$-th lower (resp.\ $y$-th upper) ramification subgroup of the Galois group of the extension $K(E[\ell])/K$ generated by the $\ell$-division points of $E.$ 
Define the wild part of the conductor of $E/K$ to be the quantity 
\begin{align}\delta(E/K)\coloneqq
\int_{0}^{+\infty}\frac{\#\,G_{i}}{\#\,G_{0}}\text{codim}_{\Bbb{F}_{\ell}}(E[\ell]^{G_{i}})di=\int_{0}^{+\infty}\text{codim}_{\Bbb{F}_{\ell}}(E[\ell]^{G^{y}})dy,\nonumber\end{align}
where $E[\ell]^{G_{i}}$ is the subspace of elements of $E[\ell]$ fixed by $G_{i}$ and $E[\ell]^{G^{y}}$ is similarly defined.
Define the tame part of the conductor to be
\[\varepsilon(E/K)\coloneqq \text{codim}_{\Bbb{Q}_{\ell}}(V_{\ell}(E)^{I(K^{\text{sep}}/K)})=\begin{cases}
0&E\text{ has good reduction};\\
1&E\text{ has multiplicative reduction};\\
2&E\text{ has additive reduction.}
\end{cases}\]
Here $V_{\ell}(E)=\varprojlim E[\ell^{n}]\otimes_{\Bbb{Z}_{\ell}}\Bbb{Q}_{\ell}$ denotes the rational $\ell$-adic Tate module and $I(K^{\text{sep}}/K)$ is the inertia subgroup of the absolute Galois group of $K.$
Put as in \cite[p.\,380]{Sil} 
\begin{align}f(E/K)=\delta(E/K)+\varepsilon(E/K)\label{f121}\end{align}
to be the conductor of $E$ over $K.$
The quantity $f(E/K)$ is an integer which is independent of the choice of $\ell.$

Let $E$ be an elliptic curve over a (global) number field $F.$ The conductor of $E/F$ is the ideal $\mathfrak{f}(E/F)=\prod_{\mathfrak{p}\text{ finite}}\mathfrak{p}^{f(E/F_{\mathfrak{p}})}$ defined by all conductors of $E/F_{\mathfrak{p}},$ where $F_{\mathfrak{p}}$ is the completion of $F$ at $\mathfrak{p}.$ 
Here the product extends over all finite primes $\mathfrak{p}$ of $F.$  
The conductor measures the extent to which an elliptic curve has bad reduction.

There is another invariant, called the minimal discriminant, which measures how bad the reduction is.
The minimal discriminant $\mathcal{D}(E/F)$ of $E/F$ is the product of the minimal discriminants of integral models of $E/F_{\mathfrak{p}}$ for all finite primes $\mathfrak{p}$ of $F.$
Szpiro proposed a conjecture (see \cite[p.\,10]{Sz} or \cite[Chapter~IV, Szpiro's Conjecture~10.6]{Sil}) concerning a relation between these two invariants. 
A stronger form of this conjecture was proposed by Lockhart-Rosen-Silverman in \cite[Remark 5]{LRS}.
\begin{conjecture*}
\begin{enumerate}[\rm{(}1)]
\item 
Fix a number field $F$ and a real positive number $\varepsilon.$ 
Then there exists a constant $C(F,\varepsilon)$ such that\text{\rm{:}} for any elliptic curve $E$ over $F,$ its minimal discriminant $\mathcal{D}(E/F)$ and its conductor $\mathfrak{f}(E/F)$ satisfy
\[N_{F/\Bbb{Q}}(\mathcal{D}(E/F))\leq C(F,\varepsilon)(N_{F/\Bbb{Q}}(\mathfrak{f}(E/F)))^{6+\varepsilon}.\]

\item \text{\rm{(Stronger form)}} 
Put $\mathfrak{f}^{\text{\rm{tame}}}(E/F)\coloneqq \prod_{\mathfrak{p}\text{\rm{ finite}}}\mathfrak{p}^{\varepsilon(E/F_{\mathfrak{p}})}.$ Then there exists a constant $C(F,\varepsilon)$ such that for any elliptic curve $E$ over $F,$ 
\[N_{F/\Bbb{Q}}(\mathcal{D}(E/F))\leq C(F,\varepsilon)(N_{F/\Bbb{Q}}(\mathfrak{f}^{\text{\rm{tame}}}(E/F)))^{6+\varepsilon}.\]
\end{enumerate}
\end{conjecture*}

The conductor $f(E/K)$ of $E$ over a local number field $K$ is estimated by 
\begin{theorem}[\text{Lockhart-Rosen-Silverman \cite{LRS}, Brumer-Kramer \cite{BK}}]\label{t121}
Let $K/\Bbb{Q}_{p}$ be a local field with normalized valuation $v_{K},$ and let $E/K$ be an elliptic curve. 
Then $f(E/K)$ has an upper bound 
\[f(E/K)\leq 2+3v_{K}(3)+6v_{K}(2).\]
\end{theorem}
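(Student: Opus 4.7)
The plan is to separate $f(E/K) = \delta(E/K) + \varepsilon(E/K)$ and bound each part. Since $V_{\ell}(E)$ has dimension $2$, the tame part satisfies $\varepsilon(E/K) \leq 2$ directly from its definition, so the real content of the theorem is the wild bound $\delta(E/K) \leq 3v_{K}(3) + 6v_{K}(2)$.

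The first step I would take is the elementary inequality
\[\delta(E/K) \leq 2 \cdot \mathrm{meas}\{\,y \geq 0 \mid G^{y} \neq 1\,\},\]
obtained by bounding the integrand $\mathrm{codim}_{\mathbb{F}_{\ell}}(E[\ell]^{G^{y}})$ in the definition of $\delta$ by $\dim_{\mathbb{F}_{\ell}} E[\ell] = 2$. It therefore suffices to control the largest upper-numbering wild break of the extension $K(E[\ell])/K$.

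Second, I would invoke Grothendieck's semistable reduction theorem to pick a finite Galois extension $L/K$ over which $E$ acquires semistable reduction; then the inertia subgroup of $\mathrm{Gal}(K^{\mathrm{sep}}/L)$ acts unipotently on $T_{\ell}(E)$, so the wild ramification of $K(E[\ell])/K$ is entirely controlled by that of $L/K$. For $p \geq 5$ one may arrange $[L:K^{\mathrm{unr}}]$ to divide $12$, hence tame, giving $\delta(E/K) = 0$. For $p = 2$ or $p = 3$ one realizes semistable reduction by adjoining a small auxiliary torsion level (naturally $N = 3$ when $p = 2$ and $N = 4$ when $p = 3$), and then one must estimate the wild breaks of $K(E[N])/K$ by writing down a minimal Weierstrass model and reading off the Newton polygon of the relevant division polynomial.

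The main obstacle will be the quantitative wild-break analysis at $p = 2$. There the Galois group of the semistable reduction extension can carry a $2$-Sylow subgroup of $\mathrm{GL}_{2}(\mathbb{F}_{3})$ (of order $16$), and the successive upper-numbering breaks of such a $2$-extension must be pinned down to contribute at most $3v_{K}(2)$ in total measure; combined with the dimension factor $2$ from the first step this produces the term $6v_{K}(2)$, while the analogous but simpler analysis at $p = 3$ yields $3v_{K}(3)$. This is the technical heart of Brumer-Kramer, and I would follow their approach: work with the formal group $\widehat{E}$, exploit its Lubin-Tate-type uniformization after base change, and extract the break numbers from the valuations of formal torsion elements. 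The auxiliary bounds on ramification in Kummer/Artin-Schreier extensions of residue characteristic $p$ are standard, but their assembly into the sharp constants $3$ and $6$ requires careful bookkeeping that I regard as the only nontrivial step.
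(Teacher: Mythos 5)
First, a point of reference: the paper does not prove this statement at all. Theorem~\ref{t121} is quoted from Lockhart--Rosen--Silverman and Brumer--Kramer, with a pointer to \cite[Chapter~IV, Theorem~10.4]{Sil}; it is used only to motivate ignoring wild ramification in the classical Szpiro setting. So there is no internal argument to compare yours against, and your proposal has to stand on its own as a proof of the cited bound.

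Judged that way, your outline follows the standard strategy (split $f=\delta+\varepsilon$, note $\varepsilon\leq 2$, bound the integrand by $2$, kill $\delta$ for $p\geq 5$ via tame acquisition of semistable reduction, and reduce $p=2,3$ to ramification estimates for the division field of a small auxiliary level), and those reductions are all sound. But the proposal stops exactly where the theorem begins: the derivation of the constants $3v_{K}(3)$ and $6v_{K}(2)$ is declared to be ``the technical heart of Brumer--Kramer'' and is not carried out. Nothing in your sketch produces an actual bound on $\mathrm{meas}\{y\geq 0 : G^{y}\neq 1\}$, which requires quantitative control of the different (equivalently the full lower-numbering filtration, via $v_{L}(\mathfrak{d}_{L/K})=\sum_{i\geq 0}(\#G_{i}-1)$ and the bound $d(M/K)\leq e-1+v_{M}(e)$ for $p$-extensions), together with the group-theoretic constraint that the wild inertia image lands in a $p$-Sylow of $\mathrm{GL}_{2}(\mathbb{F}_{3})$ (order $16$) resp.\ $\mathrm{GL}_{2}(\mathbb{F}_{2})$ (order $3$); assembling these is precisely the bookkeeping that yields $6v_{K}(2)$ and $3v_{K}(3)$. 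Two smaller cautions: the assertion that the wild ramification of $K(E[\ell])/K$ is ``entirely controlled'' by $L/K$ needs a Herbrand-type argument, since upper-numbering breaks do not pass naively to composites; and at $p=3$ your crude factor-$2$ step forces the measure bound $\tfrac{3}{2}v_{K}(3)$, so the phrase ``yields $3v_{K}(3)$'' conflates the measure with the final contribution to $\delta$. As written, the proposal is a correct plan with the decisive estimates missing, i.e.\ it defers the theorem to the very references being proved.
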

\noindent See also \cite[Chapter~IV,\,Theorem~10.4]{Sil}. 
This estimate can be very important for the study of Szpiro's conjecture since it implies that (1) and (2) in the above conjecture are equivalent.

For an abelian variety $A$ over a local number field $K,$ its conductor $f(A/K)$ is defined in \cite[(12), (13)]{LRS}. The definition is similar to (\ref{f121}). 
Lockhart-Rosen-Silverman also proposed a ``partial generalization of Szpiro's conjecture''. 
\begin{conjecture}[\text{\cite[(10)]{LRS}}]\label{c11}
Let $A$ be an abelian variety of dimension $d$ over a number field $F.$ Put $\mathfrak{f}(A/F)\coloneqq \prod_{\mathfrak{p}\text{\rm{ finite}}}\mathfrak{p}^{f(A/F_{\mathfrak{p}})}.$ 
Let $h_{\text{\rm{FP}}}(A/F)$ denote the Faltings-Parshin height of $A/F.$ Then there are constants $C_{1}(F,d)$ and $C_{2}(F,d),$ depending only on $F$ and $d,$ such that 
\[h_{\text{\rm{FP}}}(A/F)\leq C_{1}(F,d)\cdot \text{\rm{log}}|N_{F/\Bbb{Q}}\mathfrak{f}(A/F)|+C_{2}(F,d).\]
\end{conjecture}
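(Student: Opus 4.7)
The statement in question is Conjecture 1.1, and I want to be upfront: this is the Lockhart–Rosen–Silverman conjecture, whose $d=1$ specialization is Szpiro's conjecture itself and therefore equivalent to a strong form of $abc$. So any genuine proof plan is necessarily conditional or strategic, and what follows is the most natural line of attack rather than something expected to close out in the present paper.

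First I would reduce to the semistable case. By Grothendieck–Raynaud (SGA 7), there is a finite extension $F'/F$ over which $A_{F'}$ acquires everywhere semistable reduction; moreover one can take $[F':F]$ bounded in terms of $d$ alone (e.g.\ $[F':F]\mid N$ for $N$ killing the prime-to-$p$ monodromy of a faithful representation). The Faltings–Parshin height $h_{\mathrm{FP}}$ changes under such a base change by a controlled amount (Faltings' invariance formula: it is invariant on the stable locus), and the conductor ideal $\mathfrak{f}(A/F)$ transforms into $\mathfrak{f}(A_{F'}/F')$ with an error involving only $\mathrm{disc}(F'/F)$, which is again bounded in terms of $d$. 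So it suffices to prove the inequality assuming $A$ has semistable reduction everywhere.

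Next, for semistable $A$, I would write $h_{\mathrm{FP}}(A/F)$ as a sum of local contributions using Arakelov / Moret-Bailly theory on the Néron model $\mathcal{A}/\mathcal{O}_F$. At each finite prime $\mathfrak{p}$ of bad reduction, the special fiber of $\mathcal{A}$ is an extension of an abelian variety by a torus of some rank $t_\mathfrak{p}$, and the local non-archimedean contribution to the stable Faltings height is essentially $\tfrac{1}{2}t_\mathfrak{p}\log N\mathfrak{p}$ plus lower-order terms (this is the generalization of the $\log|q_E|$ contribution for Tate curves). On the other hand, for semistable $A$ the conductor exponent at $\mathfrak{p}$ is exactly $f(A/F_\mathfrak{p}) = t_\mathfrak{p}$, so
\[
\sum_{\mathfrak{p}\text{ finite}} t_\mathfrak{p}\log N\mathfrak{p} \;=\; \log|N_{F/\mathbb{Q}}\mathfrak{f}(A/F)|.
\]
This already gives the desired logarithmic bound for the non-archimedean part of $h_{\mathrm{FP}}$.

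The remaining, and main, obstacle is the archimedean contribution. Writing $A_\sigma(\mathbb{C}) = \mathbb{C}^d/\Lambda_\sigma$ for each infinite place $\sigma$ of $F$, the archimedean term in the Faltings height is essentially $-\tfrac{1}{2}\log\mathrm{vol}(\mathbb{C}^d/\Lambda_\sigma)$ taken with respect to a canonical polarization metric, and bounding this uniformly in terms of $F$ and $d$ alone is exactly the point where the Lockhart–Rosen–Silverman statement becomes as hard as Szpiro. In the elliptic curve case it is equivalent to controlling $\log|j(\tau)|$ in the fundamental domain in terms of the multiplicative reduction data, which is the content of the $abc$-$\Leftrightarrow$-Szpiro equivalence. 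For general $d$ one would need an analog of Silverman's theorem on the distance from $\tau$ to the cusp in the Siegel upper half space $\mathfrak{H}_d$, i.e.\ a bound of the form $\max\Im(\tau)\ll \log|N_{F/\mathbb{Q}}\mathfrak{f}|$, and it is here—\emph{not} in the algebraic Steps above—that any genuinely new input would have to appear. Realistically, the most one can hope to prove unconditionally, by the above outline together with the Bogomolov–Hrushovski–Philippon analytic estimates, is the much weaker bound $h_{\mathrm{FP}}(A/F)\le C_1(F,d)\log|N_{F/\mathbb{Q}}\mathrm{disc}(A/F)| + C_2(F,d)$, with the (possibly much larger) minimal discriminant in place of the conductor; bridging that gap is the content of the conjecture.
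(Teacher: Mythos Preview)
The statement you are addressing is Conjecture~1.1, and the paper does \emph{not} prove it. It is quoted verbatim from Lockhart--Rosen--Silverman as background motivation for the paper's function-field analogue (Theorem~\ref{p311}); the paper never claims or attempts a proof of the number-field statement itself. So there is no ``paper's own proof'' to compare your proposal against.

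Your write-up is internally honest about this: you flag at the outset that the $d=1$ case is Szpiro's conjecture and hence equivalent to a strong form of $abc$, and you correctly locate the genuine obstruction in the archimedean term. The reduction to the semistable case and the identification of the non-archimedean local contributions with the toric ranks $t_{\mathfrak{p}}$ are standard and fine as far as they go. But none of this constitutes a proof, as you yourself say in your final paragraph, and the paper agrees: it treats Conjecture~1.1 as open and instead proves an analogue for rank~$2$ Drinfeld modules over function fields, where the archimedean obstruction is replaced by a contribution at infinite primes that can be computed directly from the $j$-invariant.
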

Szpiro's conjecture for $\mathbb{Q}$ follows from the $abc$ conjecture \cite[Section~2, Remarque]{Sz}.
Mochizuki \cite{Mo} announced the proof of the $abc$ conjecture via the inter-universe Teichm\"{u}ller theory.

\subsection{An analogue of Szpiro's conjecture}\label{s13}

Fix the notation used in Section~\ref{s11}.
Let $T_{\pi}$ be the $\pi$-adic Tate module of $\phi.$ 
We require $v\nmid \pi.$ Recall that $\pi$ is of degree $1.$ 

Rank $2$ Drinfeld modules over $F$ are considered as analogues of elliptic curves over a number field.
Our next goal is to find a function field analogue of Conjecture~\ref{c11}. 
To do this, we first need to define an analogue of the conductor above. 
The estimate in Theorem~\ref{t121} suggests that when working with Szpiro's conjecture and its variant for elliptic curves over a number field, one may ignore the contribution of wild ramification. 
On the contrary, for the extensions generated by division points of Drinfeld modules, the wild ramification can be made arbitrarily large (see Remark \ref{r312}).
So it is worth investigating a relation between the height and the wild part of the conductor of a Drinfeld module.

Rather than the vector space of division points, we consider the $G_{v}$-module $T_{\pi},$ where $G_{v}$ denotes the absolute Galois group of $K.$
Since $G_{v}$ is a profinite group, a definition similar to that of ``$\delta(E/K)$'' in (\ref{f121}) using lower ramification subgroups is not valid.

Using the notion of the upper ramification subgroups, we define for a rank $2$ Drinfeld $\Bbb{F}_{q}[t]$-module $\phi$ over $K$ the quantity
\[\mathfrak{f}_{v}(\phi)\coloneqq \int_{0}^{+\infty}(2-\text{rank}_{\Bbb{F}_{q}[t]_{\pi}}T_{\pi}^{G_{v}^{y}})dy\]
as an analogue of the wild part of the conductor $\delta(E/K)$ of an elliptic curve.
Here $G_{v}^{y}$ denotes the $y$-th upper ramification subgroup of $G_{v}.$
Note that the prime $v$ can be infinite or finite. 
In fact, the infinite part of a height (e.g.,\ $J$-height) is not bounded. 
When we want to relate the height to the conductor (as in Theorem~\ref{t131}) in the function field case, we must define the conductors at infinite primes, unlike in the number field case.

\begin{lemma}[Lemma-Definition~\ref{d311} and Corollary~\ref{c311}]\label{l131} 
Let $\phi$ be a rank $2$ Drinfeld $\Bbb{F}_{q}[t]$-module over $F.$
For a prime $v$ of $F,$ let $\pi$ be a degree $1$ polynomial in $\Bbb{F}_{q}[t]$ with $v\nmid \pi.$ 
Assume 
\begin{equation}
    \begin{cases}
    \begin{split}
    &\text{either }\big(p\nmid v(\bm{j})\text{ and }v(\bm{j})<v(\pi)q\big),&\\
    &\quad\quad\quad\quad\text{or }v(\bm{j})\geq v(\pi)q\text{ if }v\text{ is infinite};
    \end{split}\\
    \begin{split}&\text{either }\big(q \neq 2,\,\,p\nmid v(\bm{j}),\text{ and }v(\bm{j})<0\big),&\\
    &\quad\quad\quad\quad\text{or }v(\bm{j})\geq 0\text{ if }v\text{ is finite}.\end{split}
    \end{cases}\nonumber
\end{equation}
Then the integral $\mathfrak{f}_{v}(\phi)$ is independent of the choice of the degree $1$ polynomial $\pi$ and we have
\begin{align}\mathfrak{f}_{v}(\phi)=\begin{cases}
\begin{cases}
0 & v(\bm{j})\in [v(\pi)q,+\infty);\\
\frac{-v(\bm{j})+v(\pi)q}{q-1}& v(\bm{j})\in(-\infty,v(\pi)q),\,\,p\nmid v(\bm{j}),
\end{cases}&v\text{ is infinite};\\
\begin{cases}
0&v(\bm{j})\in [0,+\infty);\\
\frac{-v(\bm{j})}{q-1}&v(\bm{j})\in (-\infty,0),\,\,p\nmid v(\bm{j}),\text{ and }q\neq 2,
\end{cases}&v\text{ is finite}.
\end{cases}\nonumber\end{align}
\end{lemma}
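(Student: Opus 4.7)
The plan is to treat the ``good reduction'' and ``bad reduction'' subcases separately, using the Main theorem to compute $T_\pi^{G_v^y}$ in the latter and then translating lower ramification into upper numbering via Herbrand's function. In the good reduction subcases $v(\bm{j})\geq v(\pi)q$ (infinite $v$) or $v(\bm{j})\geq 0$ (finite $v$), the valuation computations for roots of $\phi_{\pi^n}(X)$ from Section~\ref{s21} show that $K(\phi[\pi^n])/K$ is at worst tamely ramified for every $n$, hence $G_v^y$ acts trivially on $T_\pi$ for every $y>0$, the integrand vanishes on $(0,\infty)$, and $\mathfrak{f}_v(\phi)=0$.

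In the bad reduction subcases, the Main theorem applies. Reading $\bigl(\begin{smallmatrix}I_m&0\\u\cdot A_{m,l}&I_m\end{smallmatrix}\bigr)$ on the ordered basis $\{\xi_{-m},\ldots,\xi_{-1},\xi_m,\ldots,\xi_1\}$, combined with $\phi_\pi(\xi_{\pm k})=\xi_{\pm(k-1)}$, forces
\[\sigma_{l,u}(\xi_{-k})=\xi_{-k},\qquad \sigma_{l,u}(\xi_k)=\xi_k+u\,\xi_{-(k-l+1)}\text{ for }k\geq l,\]
and $\sigma_{l,u}(\xi_k)=\xi_k$ for $k<l$. Consequently the joint fixed $\mathbb{F}_q$-subspace of the whole wild subgroup in $\phi[\pi^m]$ is the span of $\xi_{-1},\ldots,\xi_{-m}$, a free rank-$1$ $\mathbb{F}_q[t]/\pi^m$-submodule. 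The same description propagates to every higher level ($n\geq m$ for infinite $v$; all $n$ for finite $v$); for any nontrivial subgroup $H$ of the wild inertia the extra fixed elements $\xi_j$ with small $j$ are $\pi$-torsion of bounded order and are killed in the inverse limit, so $T_\pi^H$ is $\mathbb{F}_q[t]_\pi$-free of rank exactly $1$ whenever $H\neq\{1\}$. Therefore
\[2-\mathrm{rank}_{\mathbb{F}_q[t]_\pi}T_\pi^{G_v^y}=\begin{cases}1 & G_v^y\text{ has nontrivial image in the wild inertia},\\ 0 & \text{otherwise},\end{cases}\]
so $\mathfrak{f}_v(\phi)$ equals the largest upper-numbering break of the wild ramification.

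To identify this break, I would run Herbrand's function $\psi_{L/K}$ on $L=K(\phi[\pi^m])$ (infinite case) or on $L=K(\phi[\pi^n])$ and let $n\to\infty$ (finite case). The lower breaks of the wild filtration are determined by $v_L(\sigma_{l,u}(\varpi)-\varpi)$ for a uniformizer $\varpi$ of $L$; via the formula above these reduce to the explicit valuations $v_L(\xi_{\pm k})$ computed in Section~\ref{s21} as linear expressions in $v(\bm{j})$ and $v(\pi)$. Applying $\psi_{L/K}$ then collapses the contributions to the closed forms $\tfrac{-v(\bm{j})+v(\pi)q}{q-1}$ (infinite) and $\tfrac{-v(\bm{j})}{q-1}$ (finite). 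Independence from $\pi$ is automatic from the formulas: for finite $v$ with $v\nmid\pi$ one has $v(\pi)=0$, while for infinite $v$ one has $v(\pi)=v(t)$, depending only on $v$.

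The main obstacle is this final Herbrand computation: each of the $m$ generators $\sigma_{l,u}$ contributes its own lower break indexed by $l$, and one has to check that after applying $\psi_{L/K}$ the sum of these contributions telescopes to the clean closed form above, with the combinatorial dependence on $m$ cancelling — a built-in consistency check for independence from the auxiliary integer $m$ (and hence from the choice of $\pi$).
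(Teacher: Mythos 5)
Your skeleton agrees with the paper's proof of Lemma-Definition~\ref{d311}: in the tame cases (Lemmas~\ref{l231}, \ref{l232}) the integrand vanishes; in the wild cases Theorems~\ref{p2211} and \ref{p2221} show that every nontrivial element of the image of $G_v^{y}$ ($y>0$) fixes the tower $(\xi_{-n})_{n}$ and moves $(\xi_{n})_{n}$, so $\mathrm{rank}_{\,\Bbb{F}_{q}[t]_{\pi}}T_{\pi}^{G_v^{y}}$ is $1$ exactly while that image is nontrivial, and $\mathfrak{f}_v(\phi)$ equals the largest upper ramification break of $K_\infty/K$ (equal to $K_m/K$ for infinite $v$). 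Your observation that the resulting formula depends only on $v$, hence is independent of $\pi$, is also in line with the paper, which notes that the whole analysis uses only the valuations of the coefficients of $\phi_{\pi}(X)$.

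The gap is the identification of that largest upper break, which is the actual content of the stated formula, and your sketch of it would not work as written. The lower breaks of $K(\phi[\pi^{m}])/K$ are not obtained from $v_{L}(\sigma_{l,u}(\varpi)-\varpi)$ by ``reducing to the valuations $v_{L}(\xi_{\pm k})$'': the division points are neither uniformizers nor known generators of $\mathcal{O}_{L}$, so their valuations alone do not determine the ramification filtration. The paper instead computes the breaks through the tower $K\subset K'\subset K''\subset K_{1}'\subset K_{1}\subset\cdots\subset K_{n}$, where each wild step is generated by a root of an Artin--Schreier-type polynomial $X^{q}+b_{0}X+c_{n}$ (Lemmas~\ref{l2213}, \ref{l2214}, \ref{l2221}) handled by Proposition~\ref{p221}, and then composes $\psi$-functions (Lemmas~\ref{l2216}, \ref{l2215}, \ref{l2222}). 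Two further slips: the passage from a lower break $i$ to the upper index is via $y$ with $\psi_{L/K}(y)=i$, i.e.\ by the inverse $\varphi_{L/K}=\psi_{L/K}^{-1}$, not by applying $\psi_{L/K}$; and since you have already reduced $\mathfrak{f}_{v}(\phi)$ to a single number (the top upper break, $r_{1}/E$ resp.\ $r/E$), there is no ``sum of contributions'' to telescope. Likewise, in the good-reduction cases tameness does not follow from the valuations of the $\xi_{\pm n}$ alone; one needs the rescaling argument making $\phi_{t}$ monic with separable reduction as in Lemmas~\ref{l231} and \ref{l232}. All the missing inputs are available in the paper --- the top upper break is $\frac{-v(\bm{j})+v(\pi)q}{q-1}$ for infinite $v$ and $\frac{-v(\bm{j})}{q-1}$ for finite $v$, uniformly in $n$ (the uniformity matters because $K_{\infty}/K$ is infinite in the finite-prime case) --- but your proposal defers exactly this computation, labelling it ``the main obstacle,'' so as it stands the value of $\mathfrak{f}_{v}(\phi)$ is not established.
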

\noindent 
The case where $v(\bm{j})\geq v(\pi)q$ and $v$ is infinite, and the case where $v(\bm{j})\geq 0$ and $v$ is finite follow from the fact that the extension $K(\phi[\pi^{n}])/K$ is tamely ramified (see Lemmas~\ref{l231} and \ref{l232}).
The other cases follow from Main theorem. 

For a prime $v$ of $F$ and the completion $F_{v},$ let $\text{deg}(v)$ denote the degree of the residue field of $F_{v}$ over $\Bbb{F}_{q}.$
As $\text{deg}\,\pi=1$ is assumed, we call $\mathfrak{f}(\phi)\coloneqq \sum_{v}\text{deg}(v)\cdot \mathfrak{f}_{v}(\phi)$ the \text{\it{global }}$1$\text{\it{-conductor}} of $\phi,$ where the sum extends over all primes $v$ of $F.$

Since the conductors for certain Drinfeld modules are almost given by the $j$-invariant, the $J$-height (see (\ref{f412}) for definition) of a rank $2$ Drinfeld module defined in Breuer-Pazuki-Razafinjatovo \cite{BPR} using the $j$-invariant can be a good replacement of the Faltings-Parshin height.
By the above lemma, we then obtain the following analogue of Conjecture~\ref{c11}:

\begin{theorem}[Theorem~\ref{p311}]\label{t131}
Make the same assumptions as in the above lemma. Let $h_{J}(\phi)$ denote the $J$-height of $\phi.$ Then we have the inequality
\[h_{J}(\phi)\leq \mathfrak{f}(\phi)\cdot \frac{q-1}{[F:\Bbb{F}_{q}(t)]}+q.\]
\end{theorem}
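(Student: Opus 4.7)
The plan is to expand both sides of the inequality as sums over primes of $F$ and compare them term by term. Using the definition of the $J$-height from (\ref{f412}),
\[
h_J(\phi) = \frac{1}{[F:\Bbb{F}_q(t)]} \sum_v \deg(v) \cdot \max\bigl(0, -v(\bm{j})\bigr),
\]
together with $\mathfrak{f}(\phi) = \sum_v \deg(v) \cdot \mathfrak{f}_v(\phi)$, the claim reduces (after multiplying by $[F:\Bbb{F}_q(t)]$) to showing
\[
\sum_v \deg(v)\bigl(\max(0,-v(\bm{j})) - (q-1)\mathfrak{f}_v(\phi)\bigr) \;\leq\; q\cdot[F:\Bbb{F}_q(t)].
\]

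For a finite prime $v$, Lemma~\ref{l131} yields $(q-1)\mathfrak{f}_v(\phi) = \max(0,-v(\bm{j}))$ exactly: both sides vanish when $v(\bm{j}) \geq 0$, and $(q-1)\mathfrak{f}_v(\phi) = -v(\bm{j})$ when $v(\bm{j}) < 0$ with $p\nmid v(\bm{j})$. Hence finite primes contribute nothing to the left-hand sum, and the slack $q\cdot[F:\Bbb{F}_q(t)]$ must come entirely from the infinite primes.

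For an infinite prime $v$, set $e_v := e_{v/\infty}$, so $v(\pi) = -e_v$ since $\deg\pi = 1$. Substituting into Lemma~\ref{l131} gives three sub-cases: if $v(\bm{j}) \geq 0$, the local surplus is $0$; if $-qe_v \leq v(\bm{j}) < 0$, then $\mathfrak{f}_v(\phi) = 0$ while $\max(0,-v(\bm{j})) = -v(\bm{j}) \leq qe_v$; if $v(\bm{j}) < -qe_v - q + 1$ with $p\nmid v(\bm{j})$, then $(q-1)\mathfrak{f}_v(\phi) = -v(\bm{j}) - qe_v$, so the local surplus equals $qe_v$. In every case the local surplus at $v$ is at most $qe_v$.

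Finally, using $\deg(v) = f_{v/\infty}$ (as the infinite prime of $\Bbb{F}_q(t)$ has residue field $\Bbb{F}_q$) and the fundamental identity $\sum_{v\mid\infty} e_{v/\infty} f_{v/\infty} = [F:\Bbb{F}_q(t)]$, summing over infinite primes yields
\[
\sum_{v\mid\infty}\deg(v)\cdot qe_v = q\cdot[F:\Bbb{F}_q(t)],
\]
which is precisely the slack needed. Dividing through by $[F:\Bbb{F}_q(t)]$ then gives the claimed inequality. The only non-mechanical point is the middle sub-case at infinity, where the conductor vanishes but the height does not; the entire content of the $+q$ term in the theorem is that this lost contribution is absorbed uniformly via the degree identity for $F/\Bbb{F}_q(t)$.
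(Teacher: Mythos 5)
Your proof is correct and follows essentially the same route as the paper: both expand $h_J(\phi)$ and $\mathfrak{f}(\phi)$ via Corollary~\ref{c311}, observe that finite primes cancel exactly, bound the discrepancy at each infinite prime by $q e_v$ (the paper phrases this as $\max\{-v(\bm{j})+v_0q,0\}-\max\{-v(\bm{j}),0\}\geq v_0q$), and conclude with the identity $\sum_{v\mid\infty} e_v f_v = [F:\Bbb{F}_q(t)]$. Your explicit three-case analysis at infinity is just a spelled-out version of the paper's one-line estimate.
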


We may define the conductor (or $1$-conductor) for Drinfeld modules with a higher rank.
We may expect that there is a similar statement to this theorem for higher rank Drinfeld modules. 
However, we do not know much about the generalization.

\subsection{Contents}
Keep the notation (for function fields) above.
Assume that $\pi$ is of degree $1$ and $v\nmid\pi.$ 
We may assume that $\pi$ is monic and we have 
\[\phi_{t}(X)=tX+a_{1}X^{q}+a_{2}X^{q^{2}},\,\,\phi_{\pi}(X)=\pi X+a_{1}X^{q}+a_{2}X^{q^{2}}.\]
For $v$ an infinite prime or a finite prime not dividing $t$ and $\pi,$ the coefficients of $X^{q^{i}}$ in $\phi_{t}(X)$ and $\phi_{\pi}(X)$ have the same valuations at $v.$
It turns out that the results in Sections~\ref{s21} and \ref{s22} rely only on the valuations of coefficients of $\phi_{t}(X).$
This allows us to reduce to considering the case where $\pi=t.$ 
In Section~\ref{s21}, we investigate the structure of $\phi[t^{n}]$ and the valuations of its elements.
In Section~\ref{s22}, we show in Lemma~\ref{l2213} that the extension $K(\phi[t^{n+1}])/K(\phi[t^{n}])$ is generated by a root of some degree $q$ polynomial. Lemma~\ref{l2214} is a similar result concerning the extension $K(\phi[t])/K.$ 
Thanks to Proposition~\ref{p221} on the ramification of the extension generated by the roots of $X^{q}-X-\alpha,$ 
we are able to obtain the ramification breaks of $K(\phi[t^{n+1}])/K(\phi[t^{n}])$ and of $K(\phi[t])/K.$
Then we prove Main theorem. 
In Section~\ref{s311}, we define the conductors under the assumption of Lemma~\ref{l131}. We show an analogue of Conjecture~\ref{c11} in Section~\ref{s312}. 
Appendix \ref{s213} is a supplement to Section~\ref{s21}, where we resume the study of the structure of $\phi[t^{n}]$ and the valuations of its elements when $v$ divides $t.$

We remark that Lemmas~\ref{l2111}, \ref{l2211}, and \ref{l231}~(2) which concern the valuations of the $t^{n}$-division points and the extension $K(\phi[t^{n}])/K,$ where $K$ is the completion of $F$ at an infinite prime, have appeared in the paper of Maurischat~\cite{Mau}.

\section*{Acknowledgments}
The authors express their sincere gratitude to Professor Yuichiro Taguchi for introducing them the initial idea.
His advice and encouragement have been invaluable to them. 
We greatly appreciate the anonymous referee for a careful reading and many thoughtful comments that helped us improve our manuscript.

\section{Valuations of the $t^{n}$-division points}\label{s21}
In this section, let $v$ denote a finite or infinite prime of $F$ which \text{\it{does not}} divide $t.$
We also denote the corresponding valuation by $v$ so that the uniformizer of this prime has valuation $1$ under $v.$
Let $\phi$ be a rank $2$ Drinfeld $\Bbb{F}_{q}[t]$-module over $K.$
Denote by $K_{n}$ the extension of $K$ generated by all roots of $\phi_{t^{n}}(X)$ for an integer $n\geq 0,$ i.e., $K_{n}=K(\phi[t^{n}]).$  

The $\Bbb{F}_{q}$-vector spaces $\phi[t^{i}]$ for $i=0,1,2,\ldots$ form a sequence $\{0\}= \phi[t^{0}]\subset \phi[t^{1}]\subset \phi[t^{2}]\subset\cdots.$ For each $i,$ the vector space $\phi[t^{i}]$ is formed by the roots of all polynomials $\phi_{t}(X)-\xi$ for $\xi\in \phi[t^{i-1}].$
In this section, we obtain the valuations of elements of $\phi[t^{i}]$ from those of elements in $\phi[t^{i-1}]$ and finally obtain those of elements in $\phi[t^{n}].$  
\subsection{Infinite primes}\label{s211}

In this subsection, we denote $v$ by $v_{\infty}$ to emphasize we are working with an infinite prime.
Let $v_{0},$ $v_{1},$ and $v_{2}$ denote respectively the valuations of $t,$ $a_{1},$ and $a_{2}$ in $\phi_{t}(X)=tX+a_{1}X^{q}+a_{2}X^{q^{2}}.$
In the present case, we have $v_{\infty}(t)=v_{0}<0.$

Let from now on $P_{0}=(1,v_{0}),$ $P_{1}=(q, v_{1}),$ and $P_{2}=(q^2, v_{2}).$ 
We define $\mu(P, Q)$ to be the slope of the segment $PQ$ for $P,$ $Q\in \Bbb{R}^{2}.$ 
We have $\mu(P_{0},P_{1})=\frac{v_{1}-v_{0}}{q-1},$ $\mu(P_{0},P_{2})=\frac{v_{2}-v_{0}}{q^2-1},$ and 
\begin{align}\mu(P_{0},P_{1})-\mu(P_{0}, P_{2})=\frac{v_{\infty}(\bm{j})-v_{0}q}{q^2-1}.\label{f2111}\end{align} 
We see that $\mu(P_{0},P_{1})<\mu(P_{0}, P_{2})$ if and only if $v_{\infty}(\bm{j})<v_{0}q.$ 
Assume $v_{\infty}(\bm{j})<v_{0}q.$ 
Then the Newton polygon of $\phi_{t}(X)$ is $P_{0}P_{1}P_{2}$ having exactly two segments $P_{0}P_{1}$ and $P_{1}P_{2}.$
We have $\mu(P_{1}, P_{2})=\frac{v_{2}-v_{1}}{(q-1)q}.$ 
Let $\xi_{-1}$ and $\xi_{1}$ be two roots of $\phi_{t}(X)$ with valuations
\[v_{\infty}(\xi_{-1})=-\mu(P_{0},P_{1})=-\frac{v_{1}-v_{0}}{q-1},\,\,v_{\infty}(\xi_{1})=-\mu(P_{1},P_{2})=-\frac{v_{2}-v_{1}}{(q-1)q}.\]
The roots $\xi_{-1}$ and $\xi_{1}$ generate the $\Bbb{F}_{q}$-vector space $\phi[t].$

The roots of $\phi_{t^{2}}(X)$ are those of the equations \begin{align}\phi_{t}(X)=u_{-1}\cdot \xi_{-1}+u_{1}\cdot \xi_{1}\text{ for all }u_{-1},\,\,u_{1}\in \Bbb{F}_{q}.\nonumber\end{align}
Let $x_{-}$ (resp.\ $x_{+}$) be any root of $\phi_{t}(X)=\xi_{-1}$ (resp.\ $\phi_{t}(X)=\xi_{1}$).
Then $u_{-1}\cdot x_{-}+u_{1}\cdot x_{+}$ is a root of the polynomial $\phi_{t}(X)=u_{-1}\cdot \xi_{-1}+u_{1}\cdot \xi_{1}.$
To understand the valuations of the elements in $\phi[t^{2}],$ it is enough to work with $\phi_{t}(X)=\xi_{-1}$ and $\phi_{t}(X)=\xi_{1}.$

In general, for $n\geq 1,$ denote by $\xi_{-(n+1)}$ (resp.\ $\xi_{n+1}$) a root of $\phi_{t}(X)=\xi_{-n}$ (resp.\ $\phi_{t}(X)=\xi_{n}$) such that $v_{\infty}(\xi_{-(n+1)})$ (resp.\ $v_{\infty}(\xi_{n+1})$) is the largest among the valuations of all roots of $\phi_{t}(X)=\xi_{-n}$ (resp.\ $\phi_{t}(X)=\xi_{n}$).
In this way, the elements $\xi_{-n}$ and $\xi_{n}$ for $n\geq 1$ in $K^{\text{sep}}$ are inductively defined. The next lemma concerns their valuations.
\begin{lemma}\label{l2111}
Assume $v_{\infty}(\bm{j})<v_{0}q.$ 
Let $m$ be the integer such that $v_{\infty}(\bm{j})\in (v_{0}q^{m+1},v_{0}q^{m}].$

\begin{enumerate}[\rm{(}1)]
\item
We have 
\[v_{\infty}(\xi_{n})=\begin{cases}
-\dfrac{v_{2}+v_{1}(q^{n}-q-1)}{(q-1)q^{n}} & 1\leq n\leq m;\\
- \left(v_{0}(n-m) + \dfrac{v_{2} + v_1 (q^m - q -1) }{ (q-1)q^m}\right) & n\geq m+1.
\end{cases}\]
Put $Q_{n}=(0,v_{\infty}(\xi_{n})).$
If $1 \le n \le m-1,$ then the Newton polygon of $\phi_{t}(X)-\xi_{n}$ is
$Q_{n}P_{1}P_{2}$ having exactly two segments. If $n \ge m,$ then the Newton polygon of $\phi_{t}(X)-\xi_{n}$ is $Q_{n}P_{0}P_{1}P_{2}$ having exactly three
segments. 

\item
For $n\geq 1,$ we have  \[v_{\infty}(\xi_{-n})=
-\left(v_{0}(n-1)+\dfrac{v_{1}-v_{0}}{q-1}\right).\]
Put $Q_{-n}=(0,v_{\infty}(\xi_{-n})).$
Then the Newton polygon of $\phi_{t}(X)-\xi_{-n}$ is $Q_{-n}P_{0}P_{1}P_{2}$ having exactly three segments.
\end{enumerate}
\end{lemma}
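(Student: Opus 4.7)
The plan is to prove both parts by induction on $n$, using the Newton polygon of $\phi_{t}(X)-\xi$ at each step, where $\xi$ is the previously-constructed $t^{n}$-division point. The basic principle is that for $\xi\in K^{\text{sep}}$ the valuations of the roots of $\phi_{t}(X)-\xi$ are the negatives of the slopes of the lower convex hull of the points $(0,v_{\infty}(\xi))$, $P_{0}$, $P_{1}$, $P_{2}$. By construction, $\xi_{n+1}$ (resp.\ $\xi_{-(n+1)}$) is a root of largest valuation, hence comes from the leftmost segment of this hull.

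For part (2), the recursion is straightforward. Starting from $v_{\infty}(\xi_{-1})=-\mu(P_{0},P_{1})$, an induction shows $v_{\infty}(\xi_{-n})$ strictly decreases by $-v_{0}>0$ at each step, so $Q_{-n}$ lies strictly above every other $P_{i}$. Equivalently $\mu(Q_{-n},P_{0})<\mu(P_{0},P_{1})$ (the check reduces to $v_{0}n<0$); together with $\mu(P_{0},P_{1})<\mu(P_{1},P_{2})$, which is exactly the hypothesis $v_{\infty}(\bm{j})<v_{0}q$ by~(\ref{f2111}), this yields the three-segment Newton polygon $Q_{-n}P_{0}P_{1}P_{2}$. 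Then $v_{\infty}(\xi_{-(n+1)})=-\mu(Q_{-n},P_{0})=v_{\infty}(\xi_{-n})-v_{0}$, which solves to the stated closed form.

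For part (1), the Newton polygon structure switches regime when $n$ crosses $m$. We show by induction: for $1\leq n\leq m-1$, $P_{0}$ lies (strictly) above the segment $Q_{n}P_{1}$, so the polygon is the two-segment $Q_{n}P_{1}P_{2}$ and $v_{\infty}(\xi_{n+1})=-\mu(Q_{n},P_{1})=(v_{\infty}(\xi_{n})-v_{1})/q$; solving this recurrence with base value $v_{\infty}(\xi_{1})=-\mu(P_{1},P_{2})$ gives the first branch of the formula. At $n=m$, the point $Q_{m}$ has descended far enough that $P_{0}$ enters the lower convex hull as a new vertex, so the polygon becomes $Q_{m}P_{0}P_{1}P_{2}$; thereafter, for $n\geq m$, the leftmost segment is $Q_{n}P_{0}$ and the recursion reads $v_{\infty}(\xi_{n+1})=v_{\infty}(\xi_{n})-v_{0}$, just as in part (2), which yields the second branch.

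The main obstacle is to pin down that the regime change occurs exactly at $n=m$. This reduces to comparing $\mu(Q_{n},P_{0})$ with $\mu(Q_{n},P_{1})$, i.e.\ to whether $P_{0}$ lies above the line $Q_{n}P_{1}$. Substituting the first-branch formula for $v_{\infty}(\xi_{n})$ and clearing denominators, the condition ``$P_{0}$ on or above that line'' simplifies cleanly to $v_{\infty}(\bm{j})\leq v_{0}q^{n+1}$. By the definition of $m$ as the unique integer with $v_{\infty}(\bm{j})\in(v_{0}q^{m+1},v_{0}q^{m}]$, this inequality holds for $n+1\leq m$ and fails strictly for $n+1=m+1$, pinpointing the transition. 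The remaining checks confirming the three-segment shape $Q_{n}P_{0}P_{1}P_{2}$ for $n\geq m$ follow from the same bookkeeping combined with the hypothesis $v_{\infty}(\bm{j})<v_{0}q$.
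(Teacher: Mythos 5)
Your proposal is correct and is essentially the paper's own proof: induction on $n,$ reading off $v_{\infty}(\xi_{\pm(n+1)})$ from the leftmost segment of the Newton polygon of $\phi_{t}(X)-\xi_{\pm n},$ with the regime change located by comparing $P_{0}$ with the line $Q_{n}P_{1},$ which after substituting the first-branch formula is exactly the paper's inequality (\ref{fl213}) and reduces to $v_{\infty}(\bm{j})\leq v_{0}q^{n+1},$ hence to the definition of $m.$ Only minor wording slips: in part (2) the valuations $v_{\infty}(\xi_{-n})$ \emph{increase} by $-v_{0}>0$ at each step, and the aside that $Q_{-n}$ lies strictly above every $P_{i}$ is neither needed nor always true (the operative check is the slope inequality $\mu(Q_{-n},P_{0})<\mu(P_{0},P_{1}),$ i.e.\ $v_{0}n<0,$ which you do state), while ``$P_{0}$ strictly above $Q_{n}P_{1}$'' should be ``on or above,'' as you later acknowledge, to cover the boundary case $v_{\infty}(\bm{j})=v_{0}q^{m}.$
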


\begin{proof}

We check (1). 
Let us first consider the Newton polygon of $\phi_{t}(X)-\xi_{1}.$
We have 
\[\mu(Q_{1},P_{0})=v_{0}+\frac{v_{2}-v_{1}}{(q-1)q},\,\,\mu(Q_{1},P_{1})=\frac{v_{1}+\frac{v_{2}-v_{1}}{(q-1)q}}{q-0}
,\,\,\mu(Q_{1},P_{2})=\frac{v_{2}+\frac{v_{2}-v_{1}}{(q-1)q}}{q^{2}-0}.\]
We calculate
\begin{align}
\mu(Q_{1},P_{0})-\mu(Q_{1},P_{1})&=\frac{-v_{\infty}(\bm{j})+v_{0}q^{2}}{q^{2}},\nonumber\\
\mu(Q_{1},P_{1})-\mu(Q_{1},P_{2})&=\frac{v_{\infty}(\bm{j})(q-1)}{q^{3}}<0.\nonumber\end{align}
If $v_{\infty}(\bm{j})\in (v_{0}q^{2},v_{0}q),$ we have $\mu(Q_{1},P_{0})<\mu(Q_{1},P_{1})$ and then $P_{0}$ is a vertex of the Newton polygon. 
In this case, the Newton polygon is $Q_{1}P_{0}P_{1}P_{2}$ having exactly three segments by the argument below (\ref{f2111}).
If $v_{\infty}(\bm{j})\leq v_{0}q^{2},$ we have $\mu(Q_{1},P_{0})\geq \mu(Q_{1},P_{1})$ and the Newton polygon is $Q_{1}P_{1}P_{2}$ having exactly two segments.

Assume that (1) for $n-1$ is valid.
If $n\leq m-1,$ the valuation of $\xi_{n}$ is \[-\mu(Q_{n-1},P_{1})=-\frac{v_{1}-v_{\infty}(\xi_{n-1})}{q-0}=-\frac{v_{2}+v_{1}(q^{n}-q-1)}{(q-1)q^{n}}.\]
Next, we determine the Newton polygon of $\phi_{t}(X)-\xi_{n}.$ 
We have
\begin{align}\mu(Q_{n},P_{0})=v_{0}-v_{\infty}(\xi_{n}),\,\,\mu(Q_{n},P_{1})=\frac{v_{1}-v_{\infty}(\xi_{n})}{q-0}
,\,\,\mu(Q_{n},P_{2})=\frac{v_{2}-v_{\infty}(\xi_{n})}{q^{2}-0}.\label{fl215}\end{align}
We calculate
\begin{align}\mu(Q_{n},P_{0})-\mu(Q_{n},P_{1})&=\frac{-v_{\infty}(\bm{j})+v_{0}q^{n+1}}{q^{n+1}},\label{fl213}\\
\mu(Q_{n},P_{1})-\mu(Q_{n},P_{2})&=\frac{v_{\infty}(\bm{j})(q^{n}-1)}{q^{n+2}}<0.\label{fl214}
\end{align}
Since $n\leq m-1,$ we have $\mu(Q_{n},P_{0})\geq \mu(Q_{n},P_{1}).$
This implies that $Q_{n}P_{1}$ is the first segment of the Newton polygon and the Newton polygon is $Q_{n}P_{1}P_{2}$ having exactly two segments. 

When $n=m,$ we have the same inductive hypothesis as above and $v_{\infty}(\xi_{m})=-\mu(Q_{m-1},P_{1}).$ 
However, we have $\mu(Q_{m},P_{0})<\mu(Q_{m},P_{1})$ by (\ref{fl213}).
Thus $P_{0}$ is a vertex of the Newton polygon of $\phi_{t}(X)-\xi_{m}$ by (\ref{fl214}). 
By the argument below (\ref{f2111}), the Newton polygon is $Q_{m}P_{0}P_{1}P_{2}$ having exactly three segments.

If $n\geq m+1,$ then the valuation of $\xi_{n}$ is calculated by $-\mu(Q_{n-1},P_{0})=-v_{0}+v_{\infty}(\xi_{n-1}).$ 
We show that the Newton polygon of $\phi_{t}(X)-\xi_{n}$ is $Q_{n}P_{0}P_{1}P_{2}$ having exactly three segments.
We have $\mu(Q_{n},P_{0}),$ $\mu(Q_{n},P_{1}),$ and $\mu(Q_{n},P_{2})$ as in (\ref{fl215}).
We calculate
\begin{align}
\mu(Q_{n},P_{0})-\mu(Q_{n},P_{1})&=\frac{-v_{\infty}(\bm{j})+v_{0}(q^{m+1}(n-m+1)-q^{m}(n-m))}{q^{m+1}}<0,\nonumber\\
\mu(Q_{n},P_{1})-\mu(Q_{n},P_{2})&=\frac{v_{\infty}(\bm{j})(q^{m}-1)+v_{0}(n-m)(q^{m+1}-q^{m})}{q^{m+2}}<0.\nonumber
\end{align}
Then $P_{0}$ is a vertex of the Newton polygon. 
By the argument below (\ref{f2111}), the Newton polygon has the desired form.

For (2), we first show that the Newton polygon of $\phi_{t}(X)-\xi_{-1}$ is $Q_{-1}P_{0}P_{1}P_{2}$ having exactly three segment.
We have 
\[\mu(Q_{-1},P_{0})=v_{0}+\frac{v_{1}-v_{0}}{q-1},\,\,\mu(Q_{-1},P_{1})=\frac{v_{1}+\frac{v_{1}-v_{0}}{q-1}}{q-0},\,\,\mu(Q_{-1},P_{2})=\frac{v_{2}+\frac{v_{1}-v_{0}}{q-1}}{q^{2}-0}.\]
We calculate
\begin{align}
\mu(Q_{-1},P_{0})-\mu(Q_{-1},P_{1})&=\frac{v_{0}(q-1)}{q}<0,\nonumber\\
\mu(Q_{-1},P_{1})-\mu(Q_{-1},P_{2})&=\frac{v_{\infty}(\bm{j})-v_{0}}{q^{2}}<0.\nonumber
\end{align}
Then $P_{0}$ is a vertex of the Newton polygon. 
By the argument below (\ref{f2111}), the Newton polygon has the desired form.
Assume that (2) for $n-1$ is valid.
Then the valuation of $\xi_{-n}$ is calculated by $-\mu(Q_{-(n-1)},P_{1})=-v_{0}+v_{\infty}(\xi_{-(n-1)}).$
We show that the Newton polygon of $\phi_{t}(X)-\xi_{-n}$ is $Q_{-n}P_{0}P_{1}P_{2}$ having exactly three segments.
We have
\[\mu(Q_{-n},P_{0})=v_{0}-v_{\infty}(\xi_{-n}),\,\,\mu(Q_{-n},P_{1})=\frac{v_{1}-v_{\infty}(\xi_{-n})}{q-0},\,\,\mu(Q_{-n},P_{2})=\frac{v_{2}-v_{\infty}(\xi_{-n})}{q^{2}-0}.\]
We calculate
\begin{align}
\mu(Q_{-n},P_{0})-\mu(Q_{-n},P_{1})&=\frac{v_{0}n(q-1)}{q}<0,\nonumber\\
\mu(Q_{-n},P_{1})-\mu(Q_{-n},P_{2})&=\frac{v_{\infty}(\bm{j})+v_{0}((n-1)(q-1)-1)}{q^{2}}<0.\nonumber
\end{align}
Then $P_{0}$ is a vertex of the Newton polygon. By the argument below (\ref{f2111}), the Newton polygon has the desired form.
\end{proof}

Put $\mathcal{I}_{n}\coloneqq \{\pm 1, \ldots, \pm n\}.$
We obtained $2n$ elements $\xi_{i}$ for $i\in \mathcal{I}_{n}$ in the $2n$-dimensional $\Bbb{F}_{q}$-vector space $\phi[t^{n}].$ 
The next proposition claims that they actually form a basis of $\phi[t^{n}]$ and can be arranged with respect to their valuations.

\begin{proposition}\label{p2111}
Assume $v_{\infty}(\bm{j})<v_{0}q.$ Let $m$ be the integer such that $v_{\infty}(\bm{j})\in (v_{0}q^{m+1},v_{0}q^{m}].$
Then $\xi_{i}$ for $i\in \mathcal{I}_{n}$ form a basis of $\phi[t^{n}].$
If $n\leq m,$ then we have
\begin{align}v_{\infty}(\xi_{-n})>v_{\infty}(\xi_{-(n-1)})>\cdots>v_{\infty}(\xi_{-1})\geq v_{\infty}(\xi_{n})>v_{\infty}(\xi_{n-1})>\cdots>v_{\infty}(\xi_{1}),\label{f2114}\end{align}
where the equality holds if and only if $n=m$ and $v_{\infty}(\bm{j})=v_{0}q^{m}.$
If $n\geq m+1,$ then we have
\begin{equation}\begin{split}
&v_{\infty}(\xi_{-n})>v_{\infty}(\xi_{-(n-1)})>\cdots>v_{\infty}(\xi_{-(n-m+1)})\\
\geq&v_{\infty}(\xi_{n})>v_{\infty}(\xi_{-(n-m)})\\
\geq&v_{\infty}(\xi_{n-1})>v_{\infty}(\xi_{-(n-m-1)})\\
\geq& \cdots\\ 
\geq& v_{\infty}(\xi_{m+1})>v_{\infty}(\xi_{-1})\\
\geq& v_{\infty}(\xi_{m})>v_{\infty}(\xi_{m-1})>\cdots>v_{\infty}(\xi_{1}),
\end{split}\label{f2115}
\end{equation}
where each equality holds if and only if $v_{\infty}(\bm{j})=v_{0}q^{m}.$
A root of $\phi_{t^{n}}(X)$ having valuation $v_{\infty}(\xi_{i})$ for some $i\in \mathcal{I}_{n}$ is an $\Bbb{F}_{q}$-linear combination of $\xi_{i'}$ such that 
\begin{itemize}
\item
each $\xi_{i'}$ has valuation equal to or larger than $v_{\infty}(\xi_{i});$
\item
at least one of the coefficients of $\xi_{i'}$ with $v_{\infty}(\xi_{i'})=v_{\infty}(\xi_{i})$ is nonzero.
\end{itemize}
\end{proposition}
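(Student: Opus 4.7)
My plan is to prove the proposition in three steps: verify the valuation chains, establish the basis by linear independence, and deduce the last statement from a non-cancellation property. For Step~1, substituting the explicit formulas from Lemma~\ref{l2111} into (\ref{f2114}) and (\ref{f2115}) reduces every inequality to a sign check. Along the negative-index ray $v_{\infty}(\xi_{-(n+1)}) - v_{\infty}(\xi_{-n}) = -v_{0} > 0$ is immediate; along the positive-index ray $v_{\infty}(\xi_{n+1}) - v_{\infty}(\xi_{n}) = -v_{\infty}(\bm{j})/q^{n+1}$ for $1 \le n \le m-1$ (positive since $v_{\infty}(\bm{j}) < v_{0}q < 0$) and $=-v_{0}>0$ for $n \ge m$. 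The crossing inequalities reduce to
\[
v_{\infty}(\xi_{-k}) - v_{\infty}(\xi_{k+m-1}) \;=\; \frac{v_{0}q^{m} - v_{\infty}(\bm{j})}{(q-1)q^{m}},
\]
which is $\ge 0$ by the assumption $v_{\infty}(\bm{j}) \le v_{0}q^{m}$ and vanishes exactly when $v_{\infty}(\bm{j}) = v_{0}q^{m}$, yielding the stated equality conditions. For Step~2, since $|\mathcal{I}_{n}| = 2n = \dim_{\Bbb{F}_{q}} \phi[t^{n}]$, it suffices to show $\Bbb{F}_{q}$-linear independence, which I would prove by induction on $n$: the base $n=1$ follows from $v_{\infty}(\xi_{-1}) > v_{\infty}(\xi_{1})$, and for the inductive step, applying $\phi_{t}$ to $\sum c_{i} \xi_{i} = 0$ (using $\phi_{t}(\xi_{\pm j}) = \xi_{\pm(j-1)}$ for $j \ge 2$ and $\phi_{t}(\xi_{\pm 1}) = 0$) gives a relation in $\phi[t^{n-1}]$ on the basis $\{\xi_{i}\}_{i \in \mathcal{I}_{n-1}}$ that forces $c_{i}=0$ for $|i|\ge 2$; the residual $c_{1}\xi_{1}+c_{-1}\xi_{-1}=0$ is handled by the base case.

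For Step~3, the last claim reduces to the non-cancellation statement that $v_{\infty}(\xi) = \min\{v_{\infty}(\xi_{i}) : c_{i} \ne 0\}$ for every $\xi = \sum c_{i}\xi_{i} \in \phi[t^{n}]$. In the generic case $v_{\infty}(\bm{j}) < v_{0}q^{m}$ the $v_{\infty}(\xi_{i})$ are pairwise distinct by Step~1, so the claim follows from the ultrametric inequality. In the borderline case $v_{\infty}(\bm{j}) = v_{0}q^{m}$, coincidences of valuations occur only within the pairs $\{\xi_{-k}, \xi_{k+m-1}\}$ for $1 \le k \le n-m+1$, and cancellation within such a pair must be ruled out. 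I would argue this by a dimension count: for each threshold $v$, the $\Bbb{F}_{q}$-subspace $\phi[t^{n}]^{\ge v} \coloneqq \{\xi \in \phi[t^{n}] : v_{\infty}(\xi) \ge v\}$ contains the $\Bbb{F}_{q}$-span of $\{\xi_{i} : v_{\infty}(\xi_{i}) \ge v\}$, and the two subspaces must coincide once one matches their sizes with the root counts of $\phi_{t^{n}}(X)$ read from its Newton polygon. Granting non-cancellation, both halves of the proposition's last sentence follow immediately.

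The main obstacle is the borderline case of Step~3. The Newton-polygon route requires an inductive determination of the vertices of $\phi_{t^{n}}(X)$, paralleling the structure of the proof of Lemma~\ref{l2111}. A neater idea for the pair $\{\xi_{-1},\xi_{m}\}$ exploits the defining maximality of $\xi_{m}$: any nontrivial combination $\xi_{m}+c'\xi_{-1}$ of strictly larger valuation than $\xi_{m}$ would be another root of $\phi_{t}(X) = \xi_{m-1}$ of strictly larger valuation, contradicting the choice of $\xi_{m}$. Extending this maximality argument to the pairs with $k \ge 2$, likely by iterated application of $\phi_{t}$ together with the already-established case $k=1$, is the final technical piece.
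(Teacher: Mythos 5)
Your Steps 1 and 2 match the paper's proof in substance: the paper likewise derives (\ref{f2114}) and (\ref{f2115}) by comparing the explicit valuations of Lemma~\ref{l2111} (the three inequalities it lists are exactly your monotonicity along the two rays plus your crossing computation, including the strict bound $v_{\infty}(\xi_{m+i})>v_{\infty}(\xi_{-i})$ coming from $v_{\infty}(\bm{j})>v_{0}q^{m+1}$), and it proves the basis statement by the mirror image of your induction --- it shows the $\xi_{i}$ span $\phi[t^{n}]$ by lifting a representation of $\phi_{t}(x)$ in $\phi[t^{n-1}]$, whereas you show linear independence by applying $\phi_{t}$ to a relation; both then invoke $\dim_{\Bbb{F}_{q}}\phi[t^{n}]=2n.$ Where you genuinely diverge is Step 3: the paper gives no explicit argument for the final claim, while you correctly isolate the only nontrivial point, namely non-cancellation in the borderline case $v_{\infty}(\bm{j})=v_{0}q^{m},$ where valuations coincide exactly within the pairs $\{\xi_{-k},\xi_{k+m-1}\}.$ Your maximality argument for $k=1$ is correct, and the extension you flag as the remaining piece does go through, but not by iterating $\phi_{t}$ down to the case $k=1$ alone: the needed ingredient is that the Newton polygon of $\phi_{t}(X)-\beta$ depends only on $v_{\infty}(\beta)$ (immediate from the proof of Lemma~\ref{l2111}), so that for $\beta=\xi_{k+m-2}+c\cdot\xi_{-(k-1)},$ which by induction on $k$ has valuation $v_{\infty}(\xi_{k+m-2}),$ every root of $\phi_{t}(X)=\beta$ has valuation at most $-v_{0}+v_{\infty}(\xi_{k+m-2})=v_{\infty}(\xi_{k+m-1});$ combined with the ultrametric lower bound this yields $v_{\infty}(\xi_{k+m-1}+c\cdot\xi_{-k})=v_{\infty}(\xi_{k+m-1}),$ and the general statement follows by grouping an arbitrary combination by its minimal valuation, which is attained by at most one such pair. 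With that observation supplied, your proposal is complete, and on this last point it is actually more thorough than the printed proof.
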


\begin{proof}
The inequalities (\ref{f2114}) and (\ref{f2115}) follow from 

\begin{enumerate}[(1)]
\item
$v_{\infty}(\xi_{-(i+1)})>v_{\infty}(\xi_{-i})\text{ and }v_{\infty}(\xi_{i+1})>v_{\infty}(\xi_{i})\text{ for }i\geq 1;$
\item
$v_{\infty}(\xi_{-1})\geq v_{\infty}(\xi_{n}),$ where the equality holds if and only if $n=m$ and $v_{\infty}(\bm{j})=v_{0}q^{m}$;
\item
$v_{\infty} (\xi_{-(i+1)}) \geq  v_{\infty} (\xi_{m+i}) > v_{\infty} (\xi_{-i})$ for $i=1,\ldots,n-m,$ where the equality holds if and only if $v_{\infty}(\bm{j})=v_{0}q^{m}.$
\end{enumerate}
\noindent These inequalities follow from Lemma~\ref{l2111}, e.g., the left and the right inequalities of (3) are equivalent to $v_{\infty}(\bm{j})\leq v_{0}q^{m}$ and $v_{\infty}(\bm{j})> v_{0}q^{m+1},$ respectively. 

Let us show that $\phi[t^{n}]$ is generated by $\xi_{i}$ for $i\in \mathcal{I}_{n}.$ 
The case $n=1$ is clear.
Assume that $\xi_{i}$ for $i\in \mathcal{I}_{n-1}$ form a basis of $\phi[t^{n-1}].$
It suffices to show that each $x\in \phi[t^{n}]$ is an $\Bbb{F}_{q}$-linear combination of $\xi_{i}$ for $i\in \mathcal{I}_{n}.$
Indeed, there are $u_{i}$ for $i\in \mathcal{I}_{n-1}$ such that $\phi_{t}(x)=\sum_{i\in \mathcal{I}_{n-1}}u_{i}\cdot \xi_{i}$ since $\phi_{t}(x)\in \phi[t^{n-1}].$
Then we have
\[\phi_{t}\bigg(x-\sum_{i=1}^{n-1}u_{-i}\cdot \xi_{-(i+1)}-\sum_{i=1}^{n-1}u_{i}\cdot \xi_{i+1}\bigg)=0.\]
Hence there exist $u,$ $u'\in\Bbb{F}_{q}$ such that 
\[x-\sum_{i=1}^{n-1}u_{-i}\cdot \xi_{-(i+1)}-\sum_{i=1}^{n-1}u_{i}\cdot \xi_{i+1}=u\cdot \xi_{-1}+u'\cdot \xi_{1}.\]
\end{proof}

\begin{remark}
Assume $v_{\infty}(\bm{j})\neq 0$ and  $v_{\infty}(\bm{j})\neq v_{0}q^{m}$ for any integer $m\geq 2$ in this remark. 
We can determine the valuations of the coefficients of $\phi_{t^{n}}(X)$ in terms of $v_{0},$ $v_{1},$ and $v_{2}.$
Here we describe how to determine the valuations for $n=2.$ 
We have
\[\phi_{t^{2}}(X)=t^{2}X+(a_{1}t+a_{1}t^{q})X^{q}+(a_{2}t+a_{1}^{q+1}+a_{2}t^{q^{2}})X^{q^{2}}+(a_{1}a_{2}^{q}+a_{2}a_{1}^{q^{2}})X^{q^{3}}+a_{2}^{q^{2}+1}X^{q^{4}}.\]
We have by comparing the valuations of the summands of each coefficient \begin{align}
v_{\infty}(t^{2})&=2v_{0},\nonumber\\
v_{\infty}(a_{1}t+a_{1}t^{q})&=v_{\infty}(a_{1}t^{q})=v_{1}+v_{0}q,\nonumber\\
v_{\infty}(a_{2}t+a_{1}^{q+1}+a_{2}t^{q^{2}})&=\begin{cases}
v_{\infty}(a_{1}^{q+1})=v_{1}(q+1)&v_{\infty}(\bm{j})<v_{0}q^{2};\\
v_{\infty}(a_{2}t^{q^{2}})=v_{2}+v_{0}q^{2} & v_{\infty}(\bm{j})>v_{0}q^{2},
\end{cases}\nonumber\\
v_{\infty}(a_{1}a_{2}^{q}+a_{2}a_{1}^{q^{2}})&=\begin{cases}v_{\infty}(a_{2}a_{1}^{q^{2}})=v_{2}+v_{1}q^{2}&v_{\infty}(\bm{j})<0;\\
v_{\infty}(a_{1}a_{2}^{q})=v_{1}+v_{2}q&v_{\infty}(\bm{j})>0,
\end{cases}\nonumber\\
v_{\infty}(a_{2}^{q^{2}+1})&=v_{2}(q^{2}+1).\nonumber
\end{align}
The Newton polygon of $\phi_{t^{2}}(X)$ can be determined from these data by comparing the slopes of all possible segments. 
The valuations of the coefficients of $\phi_{t^{n}}(X)$ for general $n$ can be determined in the same way as above.
However, the calculations are complicated.
\end{remark}

Let us look at the case where $v_{\infty}(\bm{j})\in [v_{0}q,+\infty).$
By (\ref{f2111}), the Newton polygon of $\phi_{t}(X)$ is $P_{0}P_{2}$ having exactly one segment and all nonzero roots of $\phi_{t}(X)$ have valuation
$-\frac{v_{2}-v_{0}}{q^{2}-1}.$
Let $\xi_{-1}$ and $\xi_{1}$ be elements of $\phi[t]$ such that they form a basis. 
For $n\geq 1,$ denote by $\xi_{-(n+1)}$ (resp.\ $\xi_{n+1}$) a root of $\phi_{t}(X)=\xi_{-n}$ (resp.\ $\phi_{t}(X)=\xi_{n}$) such that $v_{\infty}(\xi_{-(n+1)})$ (resp.\ $v_{\infty}(\xi_{n+1})$) is the largest among valuations of all roots of $\phi_{t}(X)=\xi_{-n}$ (resp.\ $\phi_{t}(X)=\xi_{n}$).
There are claims similar to Lemma~\ref{l2111} and Proposition~\ref{p2111}. 
\begin{proposition}\label{p2112}
Assume $v_{\infty}(\bm{j})\in [v_{0}q,+\infty).$
\begin{enumerate}[\rm{(}1)] 
\item
For $n\geq 1,$ we have 
\[v_{\infty}(\xi_{-n})=v_{\infty}(\xi_{n})=-\left(v_{0}(n-1)+\dfrac{v_{2}-v_{0}}{q^{2}-1}\right).\]
Put $Q_{n}=(0,v_{\infty}(\xi_{-n}))=(0,v_{\infty}(\xi_{n})).$
Then the Newton polygons of $\phi_{t}(X)-\xi_{-n}$ and $\phi_{t}(X)-\xi_{n}$ are $Q_{n}P_{0}P_{2}$ having exactly two segments. 

\item
The roots $\xi_{i}$ for $i\in \mathcal{I}_{n}$ form a basis of $\phi[t^{n}].$
For $n\geq 1,$ we have 
\[v_{\infty}(\xi_{-n}) = v_{\infty}(\xi_{n}) >  v_{\infty}(\xi_{-(n-1)}) = v_{\infty}(\xi_{n-1}) > \cdots > v_{\infty}(\xi_{-1}) = v_{\infty}(\xi_{1}).\]
A root of $\phi_{t^{n}}(X)$ having valuation $v_{\infty}(\xi_{i})$ for some $i\in \mathcal{I}_{n}$ is an $\Bbb{F}_{q}$-linear combination of $\xi_{i'}$ such that 
\begin{itemize}
\item 
each $\xi_{i'}$ has valuation equal to or larger than $v_{\infty}(\xi_{i});$ 
\item
at least one of the coefficients of $\xi_{i'}$ with $v_{\infty}(\xi_{i'})=v_{\infty}(\xi_{i})$ is nonzero.
\end{itemize}
\end{enumerate}
\end{proposition}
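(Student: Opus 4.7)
The plan is to mirror the inductive structure of Lemma~\ref{l2111} and Proposition~\ref{p2111}, with the simplification that when $v_{\infty}(\bm{j})\in[v_{0}q,+\infty)$ the Newton polygon of $\phi_{t}(X)$ has only one segment. By (\ref{f2111}) the hypothesis $v_{\infty}(\bm{j})\geq v_{0}q$ is exactly $\mu(P_{0},P_{1})\geq \mu(P_{0},P_{2})$, so $P_{1}$ lies on or above the segment $P_{0}P_{2}$ and the Newton polygon of $\phi_{t}(X)$ is $P_{0}P_{2}$. Hence all nonzero roots of $\phi_{t}(X)$ share the valuation $-(v_{2}-v_{0})/(q^{2}-1)$, which gives the base case $n=1$ of~(1).

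For the inductive step, suppose the formula for $v_{\infty}(\xi_{n})$ holds and examine the Newton polygon of $\phi_{t}(X)-\xi_{n}$ with candidate vertices $Q_{n},P_{0},P_{1},P_{2}$. A direct slope computation analogous to (\ref{fl213})--(\ref{fl214}), but with the present closed form for $v_{\infty}(\xi_{n})$, shows that $\mu(Q_{n},P_{0})-\mu(Q_{n},P_{1})$ and $\mu(Q_{n},P_{0})-\mu(Q_{n},P_{2})$ are both negative (each reduces to a negative expression by combining $v_{0}<0$ with the inequality $v_{\infty}(\bm{j})\geq v_{0}q$). Thus $P_{0}$ is a vertex, and since $P_{1}$ already lies on or above $P_{0}P_{2}$ it is not a vertex of the new polygon either. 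The Newton polygon is therefore $Q_{n}P_{0}P_{2}$ with exactly two segments, the first of length one. The segment $Q_{n}P_{0}$ contributes a unique root of valuation $-\mu(Q_{n},P_{0})=v_{\infty}(\xi_{n})-v_{0}$, which is the largest; identifying it with $v_{\infty}(\xi_{n+1})$ and iterating yields the stated closed form. The identical argument in the $\xi_{-n}$ direction completes~(1).

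For~(2), the strict chain of inequalities is immediate from~(1) since $-v_{0}>0$, and the paired equalities $v_{\infty}(\xi_{-n})=v_{\infty}(\xi_{n})$ are built into the formula. To see that $\{\xi_{i}:i\in\mathcal{I}_{n}\}$ is a basis of $\phi[t^{n}]$, I would follow the induction of Proposition~\ref{p2111} verbatim: given $x\in\phi[t^{n}]$, write $\phi_{t}(x)=\sum_{i\in\mathcal{I}_{n-1}}u_{i}\xi_{i}$ and observe that $x-\sum_{i=1}^{n-1}u_{-i}\xi_{-(i+1)}-\sum_{i=1}^{n-1}u_{i}\xi_{i+1}$ lies in $\phi[t]=\Bbb{F}_{q}\xi_{-1}+\Bbb{F}_{q}\xi_{1}$. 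The closing statement on $\Bbb{F}_{q}$-linear combinations then follows from the ultrametric inequality: if the valuation of a combination $\sum c_{i'}\xi_{i'}$ equals $v_{\infty}(\xi_{i})$, at least one coefficient $c_{i'}$ with $v_{\infty}(\xi_{i'})=v_{\infty}(\xi_{i})$ must be nonzero, for otherwise every term would contribute valuation strictly greater than $v_{\infty}(\xi_{i})$.

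I do not anticipate any serious obstacle. The computations here are genuinely simpler than in Lemma~\ref{l2111} because only a single ``regime'' of slopes appears and no analogue of the break at $n=m$ intervenes. The only minor care needed is the boundary case $v_{\infty}(\bm{j})=v_{0}q$, in which $P_{1}$ lies exactly on the segment $P_{0}P_{2}$; since $P_{1}$ is still not a vertex, the slope arguments go through unchanged.
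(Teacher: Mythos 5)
Your treatment of part (1) and of the ordering and basis statements in (2) is correct and is essentially the paper's own argument: the same slope computations show $\mu(Q_{n},P_{0})<\mu(Q_{n},P_{1})$ and $\mu(Q_{n},P_{0})<\mu(Q_{n},P_{2})$, so the Newton polygon is $Q_{n}P_{0}P_{2}$, the unique root coming from the length-one segment $Q_{n}P_{0}$ gives $v_{\infty}(\xi_{\pm(n+1)})=v_{\infty}(\xi_{\pm n})-v_{0}$, and the spanning argument is the one from Proposition~\ref{p2111}.

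There is, however, a gap in your justification of the closing claim about $\Bbb{F}_{q}$-linear combinations. In the present situation the basis vectors come in pairs of \emph{equal} valuation, $v_{\infty}(\xi_{-l})=v_{\infty}(\xi_{l})$, so the ultrametric inequality alone does not control the valuation of a combination: a priori a pair $u\xi_{-l}+u'\xi_{l}$ with $u,u'\in\Bbb{F}_{q}^{\times}$ could have valuation strictly larger than $v_{\infty}(\xi_{l})$ by cancellation, and then a root of valuation $v_{\infty}(\xi_{i})$ could involve basis vectors of strictly smaller valuation, violating the first listed property. Your sentence only addresses the second property, and its reasoning (``otherwise every term would contribute valuation strictly greater'') tacitly assumes the first one. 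What is needed, and what your own computation already supplies, is the observation that the Newton polygon of $\phi_{t}(X)-z$ depends only on $v_{\infty}(z)$; hence, by induction, every nonzero element of $\phi[t^{l}]$ has valuation in $\{v_{\infty}(\xi_{1}),\ldots,v_{\infty}(\xi_{l})\}$, in particular at most $v_{\infty}(\xi_{l})$. Since $u\xi_{-l}+u'\xi_{l}$ (coefficients not both zero) is a nonzero element of $\phi[t^{l}]$ of valuation at least $v_{\infty}(\xi_{l})$, its valuation is exactly $v_{\infty}(\xi_{l})$; with this no-cancellation fact, grouping an arbitrary combination into the pairs $\{\xi_{-l},\xi_{l}\}$ and looking at the lowest level with a nonzero pair yields both listed properties at once. (The paper is terse here as well, saying only that (2) is ``similar to Proposition~\ref{p2111}'', where the valuations are generically pairwise distinct and the issue does not arise; but as written your argument does not prove the claim, so you should add the step above.)
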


\begin{proof}
We prove (1) by induction on $n.$ 
We first check the Newton polygons of $\phi_{t}(X)-\xi_{\pm 1}$ (i.e., the polynomials $\phi_{t}(X)-\xi_{1}$ and $\phi_{t}(X)-\xi_{-1}$) are $Q_{1}P_{0}P_{2}$ having exactly two segments.
We have 
\[\mu(Q_{1},P_{0})=v_{0}+\frac{v_{2}-v_{0}}{q^{2}-1},\,\,\mu(Q_{1},P_{1})=\frac{v_{1}+\frac{v_{2}-v_{0}}{q^{2}-1}}{q-0},\,\,\mu(Q_{1},P_{2})=\frac{v_{2}+\frac{v_{2}-v_{0}}{q^{2}-1}}{q^{2}}.\]
We calculate 
\begin{align}
\mu(Q_{1},P_{0})-\mu(Q_{1},P_{1})&=\frac{-v_{\infty}(\bm{j})+v_{0}(q^{2}+q-1)}{(q+1)q}<0,\nonumber\\
\mu(Q_{1},P_{0})-\mu(Q_{1},P_{2})&=\frac{v_{0}(q^{2}-1)}{q^{2}}<0.\nonumber
\end{align}
Then $P_{0}$ is a vertex of the Newton polygon.
The Newton polygons have the desired form since the Newton polygon of $\phi_{t}(X)$ is $P_{0}P_{2}.$

Assume (1) for $n-1.$
Then the valuation of $\xi_{\pm n}$ is $-\mu(Q_{n},P_{0})=-(v_{0}-v_{\infty}(\xi_{\pm (n-1)})).$ 
We show that the Newton polygons of $\phi_{t}(X)-\xi_{\pm n}$ are $Q_{n}P_{0}P_{2}$ having exactly two segments.
We have 
\[\mu(Q_{n},P_{0})=v_{0}-v_{\infty}(\xi_{\pm n}),\,\,\mu(Q_{n},P_{1})=\frac{v_{1}-v_{\infty}(\xi_{\pm n})}{q-0},\,\,\mu(Q_{n},P_{2})=\frac{v_{2}-v_{\infty}(\xi_{\pm n})}{q^{2}-0}.\]
The Newton polygons are determined by the inequalities
\begin{align}
\mu(Q_{n},P_{0})-\mu(Q_{n},P_{1})&=\frac{-v_{\infty}(\bm{j})+v_{0}((q-1)(q+1)n+q)}{(q+1)q}<0,\nonumber\\
\mu(Q_{n},P_{0})-\mu(Q_{n},P_{2})&=\frac{v_{0}n(q^{2}-1)}{q^{2}}<0.\nonumber
\end{align}

The proof of (2) is similar to that of Proposition~\ref{p2111}.
\end{proof}

\subsection{Finite primes}\label{s212}
Let us denote $v$ by $v_{f}$ to emphasize that we are working with a finite prime.
Let $v_{0},$ $v_{1},$ and $v_{2}$ denote respectively the valuation of $t,$ $a_{1},$ and $a_{2},$ which appear in  $\phi_{t}(X)=tX+a_{1}X^{q}+a_{2}X^{q^{2}}.$
Since we assume $v\nmid t,$ we now have $v_{f}(t)=v_{0}=0.$
If $v_{f}(\bm{j})<0,$ then a calculation similar to that in Section~\ref{s211} shows that the Newton polygon of $\phi_{t}(X)$ has exactly two segments.
In this case, let $\xi_{-1}$ (resp.\ $\xi_{1}$) denote a nonzero root with larger valuation (resp.\ with smaller valuation).
For $n\geq 1,$ let $\xi_{-(n+1)}$ (resp.\ $\xi_{n+1}$) denote a root of $\phi_{t}(X)=\xi_{-n}$ (resp.\ $\phi_{t}(X)=\xi_{n}$) with the largest valuation. 

If $v_{f}(\bm{j})\geq 0,$ then the Newton polygon of $\phi_{t}(X)$ has exactly one segment.
Let $\xi_{-1}$ and $\xi_{1}$ be elements of $\phi[t]$ such that they form a basis. 
For $n\geq 1,$ let $\xi_{-(n+1)}$ (resp.\ $\xi_{n+1}$) denote a root of $\phi_{t}(X)=\xi_{-n}$ (resp.\ $\phi_{t}(X)=\xi_{n}$).
We have the following claims similar to Lemma~\ref{l2111}, Proposition~\ref{p2111}, and Proposition~\ref{p2112}.
\begin{proposition}\label{p2121}
Let $n$ be a positive integer.
Put $P_{0}=(1,v_{0}),$ $P_{1}=(q,v_{1}),$ and $P_{2}=(q^{2},v_{2}).$
\begin{enumerate}[\rm{(}1)]
\item If $v_{f}(\bm{j})<0,$ we have 
\begin{align}v_{f}(\xi_{n})&=-\dfrac{v_{2}+v_{1}(q^{n}-q-1)}{(q-1)q^{n}},\nonumber\\
v_{f}(\xi_{-n})&=-\dfrac{v_{1}}{q-1}.\nonumber\end{align}
Put $Q_{n}=(0,v_{f}(\xi_{n}))$ and $Q_{-n}=(0,v_{f}(\xi_{-n})).$ 
Then the Newton polygon of $\phi_{t}(X)-\xi_{n}$ \text{\rm{(}}resp.\ $\phi_{t}(X)-\xi_{-n}$\text{\rm{)}} is $Q_{n}P_{1}P_{2}$ \text{\rm{(}}resp.\  $Q_{-n}P_{1}P_{2}$\text{\rm{)}} having exactly two segments.

Moreover, $\xi_{i}$ for $i\in \mathcal{I}_{n}$ form a basis of $\phi[t^{n}].$
We have
\[v_{f}(\xi_{-n})=v_{f}(\xi_{-(n-1)})=\cdots=v_{f}(\xi_{-1})>v_{f}(\xi_{n})>v_{f}(\xi_{n-1})>\cdots>v_{f}(\xi_{1}).\]
A root of $\phi_{t^{n}}(X)$ having the valuation $v_{f}(\xi_{i})$ for some $i\in \mathcal{I}_{n}$ is an $\Bbb{F}_{q}$-linear combination of  $\xi_{i'}$ such that 
\begin{itemize}
\item
each $\xi_{i'}$ has valuation equal to or larger than $v_{f}(\xi_{i});$ 
\item
at least one of the coefficients of $\xi_{i'}$ with $v_{f}(\xi_{i'})=v_{f}(\xi_{i})$ is nonzero.\end{itemize}

\item If $v_{f}(\bm{j})\geq 0,$ we have \[v_{f}(\xi_{-n})=v_{f}(\xi_{n})=-\dfrac{v_{2}}{q^{2}-1}.\]
Put $Q=(0,v_{f}(\xi_{-n}))=(0,v_{f}(\xi_{n})).$ Then the Newton polygons of $\phi_{t}(X)-\xi_{-n}$ and $\phi_{t}(X)-\xi_{n}$ are $Q_{n}P_{2}$ having exactly one segment.
The roots $\xi_{i}$ for $i\in \mathcal{I}_{n}$ form a basis of $\phi[t^{n}].$
\end{enumerate}
\end{proposition}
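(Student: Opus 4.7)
The approach is to mirror the Newton polygon analysis of Lemma~\ref{l2111} and Propositions~\ref{p2111}--\ref{p2112}, specialized to the setting $v_0 = 0$. The key simplifications compared to the infinite-prime case are that in part~(1) the threshold integer $m$ of Lemma~\ref{l2111} never appears, so a single closed-form expression for $v_f(\xi_n)$ suffices for all $n$, and in part~(2) the Newton polygon of $\phi_t(X)$ collapses to a single segment $P_0 P_2$.

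For part~(1), the base case $n=1$ follows from~(\ref{f2111}): since $v_f(\bm{j}) < 0 = v_0 q$, the Newton polygon of $\phi_t(X)$ is $P_0 P_1 P_2$ with two segments, yielding the stated $v_f(\xi_{\pm 1})$. Inductively, computing the slopes $\mu(Q_n, P_i)$ as in~(\ref{fl215}) and applying~(\ref{fl213}) with $v_0 = 0$ gives
\[
\mu(Q_n, P_0) - \mu(Q_n, P_1) = -v_f(\bm{j})/q^{n+1} > 0
\]
for all $n$, so $P_0$ is strictly above the line $Q_n P_1$ and never becomes a vertex; combined with $\mu(Q_n, P_1) < \mu(Q_n, P_2)$ (from~(\ref{fl214})), the Newton polygon of $\phi_t(X) - \xi_n$ is $Q_n P_1 P_2$ with two segments, and the recursion $v_f(\xi_{n+1}) = -\mu(Q_n, P_1)$ propagates the closed-form formula. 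For $\xi_{-n}$, the inductive hypothesis $v_f(\xi_{-n}) = -v_1/(q-1)$ yields $\mu(Q_{-n}, P_0) = \mu(Q_{-n}, P_1) = v_1/(q-1)$ and $\mu(Q_{-n}, P_1) - \mu(Q_{-n}, P_2) = v_f(\bm{j})/q^2 < 0$, so the polygon is $Q_{-n} P_1 P_2$ (with $P_0$ lying on the first segment but not a separate vertex), and $v_f(\xi_{-(n+1)}) = -v_1/(q-1)$ is preserved.

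For part~(2), the condition $v_f(\bm{j}) \geq 0$ forces the Newton polygon of $\phi_t(X)$ to be the single segment $P_0 P_2$, so $v_f(\xi_{\pm 1}) = -v_2/(q^2-1)$. Inductively, a direct computation gives $\mu(Q_n, P_0) = \mu(Q_n, P_2) = v_2/(q^2-1)$ and
\[
\mu(Q_n, P_1) - \mu(Q_n, P_2) = v_f(\bm{j})/(q(q+1)) \geq 0,
\]
so the Newton polygon of $\phi_t(X) - \xi_{\pm n}$ is the single segment $Q_n P_2$ and $v_f(\xi_{\pm (n+1)}) = -v_2/(q^2-1)$. In both parts, linear independence of $\{\xi_i : i \in \mathcal{I}_n\}$ in $\phi[t^n]$ follows by the descent argument of Proposition~\ref{p2111}: applying $\phi_t$ to a relation $\sum c_i \xi_i = 0$ and using $\phi_t(\xi_{\pm k}) = \xi_{\pm(k-1)}$ for $k \geq 2$ and $\phi_t(\xi_{\pm 1}) = 0$ reduces to a relation in $\phi[t^{n-1}]$, forcing $c_i = 0$ for $|i| \geq 2$ by induction, after which $c_{\pm 1} = 0$ because $\xi_{\pm 1}$ are chosen to span $\phi[t]$. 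The statement in part~(1) about roots of $\phi_{t^n}(X)$ of valuation $v_f(\xi_i)$ being $\mathbb{F}_q$-linear combinations of $\xi_{i'}$ with $v_f(\xi_{i'}) \geq v_f(\xi_i)$ then follows from the strict valuation ordering and the ultrametric inequality, exactly as in Proposition~\ref{p2111}. The main obstacle is really bookkeeping rather than substance: keeping careful track of the degenerate configurations where $P_0$ lies on (rather than strictly above or below) a Newton segment, and ensuring the boundary case $v_f(\bm{j}) = 0$ of part~(2) is handled without sign confusion.
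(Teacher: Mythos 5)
Your proposal is correct and takes essentially the same route as the paper's proof: induction on $n$ via the same Newton-polygon slope comparisons (e.g.\ $\mu(Q_{n},P_{0})-\mu(Q_{n},P_{1})=-v_{f}(\bm{j})/q^{n+1}>0$ and $\mu(Q_{-n},P_{0})=\mu(Q_{-n},P_{1})$ in case (1), and $\mu(Q_{n},P_{1})-\mu(Q_{n},P_{2})=v_{f}(\bm{j})/((q+1)q)\geq 0$ in case (2)), which is exactly the specialization to $v_{0}=0$ that the paper carries out. The only cosmetic difference is that you obtain the basis statement by proving linear independence (applying $\phi_{t}$ to a relation and descending), whereas the paper shows spanning as in Proposition~\ref{p2111}; both arguments are immediate and equivalent here.
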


\begin{proof}
The claims for the valuations and  for the Newton polygons are proved by induction on $n.$ 
For (1), we first show that the Newton polygons of $\phi_{t}(X)-\xi_{\pm 1}$ are $Q_{\pm 1}P_{1}P_{2}$ having exactly two segments.
We have 
\[\mu(Q_{\pm 1},P_{0})=-v_{f}(\xi_{\pm 1}),\,\,\mu(Q_{\pm 1},P_{1})=\frac{v_{1}-v_{f}(\xi_{\pm 1})}{q-0}
,\,\,\mu(Q_{\pm 1},P_{2})=\frac{v_{2}-v_{f}(\xi_{\pm 1})}{q^{2}-0}.\]
The Newton polygon of $\phi_{t}(X)-\xi_{1}$ is determined by the inequalities
\begin{align}
\mu(Q_{1},P_{0})-\mu(Q_{1},P_{1})&=\frac{-v_{f}(\bm{j})}{q^{2}}>0,\nonumber\\
\mu(Q_{1},P_{1})-\mu(Q_{1},P_{2})&=\frac{v_{f}(\bm{j})(q-1)}{q^{3}}<0\nonumber.
\end{align}
The Newton polygon of $\phi_{t}(X)-\xi_{-1}$ is determined by the equality  $\mu(Q_{-1},P_{0})-\mu(Q_{-1},P_{1})=0,$
and the inequality \[\mu(Q_{-1},P_{1})-\mu(Q_{-1},P_{2})=\frac{v_{f}(\bm{j})}{q^{2}}<0.\]

Assume that (1) for $n-1$ is valid.
The valuations of $\xi_{\pm n}$ are  \begin{align}-\mu(Q_{n-1},P_{1})&=-\frac{v_{1}-v_{f}(\xi_{n-1})}{q-0}=-\frac{v_{2}+v_{1}(q^{n}-q-1)}{(q-1)q^{n}},\nonumber\\
-\mu(Q_{-(n-1)},P_{1})&=-\frac{v_{1}-v_{f}(\xi_{-(n-1)})}{q-0}=-\frac{v_{1}}{q-1}.\nonumber
\end{align}
We show that the Newton polygons of $\phi_{t}(X)-\xi_{\pm n}$ are $Q_{\pm n}P_{1}P_{2}$ having exactly two segments.
We have 
\[\mu(Q_{\pm n},P_{0})=-v_{f}(\xi_{\pm n}),\,\,\mu(Q_{\pm n},P_{1})=\frac{v_{1}-v_{f}(\xi_{\pm n})}{q-0},\,\,\mu(Q_{\pm n},P_{2})=\frac{v_{2}-v_{f}(\xi_{\pm n})}{q^{2}-0}.\]
The Newton polygon of $\phi_{t}(X)-\xi_{n}$ is determined by the inequalities
\begin{align}
\mu(Q_{n},P_{0})-\mu(Q_{n},P_{1})&=\frac{-v_{f}(\bm{j})}{q^{n+1}}>0,\nonumber\\
\mu(Q_{n},P_{1})-\mu(Q_{n},P_{2})&=\frac{v_{f}(\bm{j})(q^{n}-1)}{q^{n+2}}<0.\nonumber
\end{align}
The Newton polygon of $\phi_{t}(X)-\xi_{-n}$ is determined by the equality $\mu(Q_{-n},P_{0})-\mu(Q_{-n},P_{1})=0,$
and the inequality \[\mu(Q_{-n},P_{1})-\mu(Q_{-n},P_{2})=\frac{v_{f}(\bm{j})}{q^{2}}<0.\]
The proof of the rest of (1) is similar to that of Proposition~\ref{p2111}.

For (2), we first check the Newton polygons of $\phi_{t}(X)-\xi_{\pm 1}$ are $QP_{2}$ having exactly one segment.
We have 
\[\mu(Q,P_{0})=-v_{f}(\xi_{\pm 1}),\,\,\mu(Q,P_{1})=\frac{v_{1}-v_{f}(\xi_{\pm 1})}{q-0},\,\,\mu(Q,P_{2})=\frac{v_{2}-v_{f}(\xi_{\pm 1})}{q^{2}-0}.\]
The Newton polygons of $\phi_{t}(X)-\xi_{\pm 1}$ are determined by the equality $\mu(Q,P_{0})-\mu(Q,P_{2})=0,$ and the inequality
\[\mu(Q,P_{1})-\mu(Q,P_{2})=\frac{v_{f}(\bm{j})}{(q+1)q}\geq 0.\]
Assume that (2) for $n-1$ is valid. 
The valuations of $\xi_{\pm n}$ are calculated by $-\mu(Q,P_{2}).$
Since $v_{f}(\xi_{\pm n})=v_{f}(\xi_{\pm 1}),$ the Newton polygons of $\phi_{t}(X)-\xi_{\pm n}$ are the same as those of $\phi_{t}(X)-\xi_{\pm 1}.$ 
The proof of the rest of (2) is similar to that of Proposition~\ref{p2111}.
\end{proof}

\section{The Herbrand $\psi$-functions and ramification subgroups for extensions by division points}\label{s22}
\subsection{The Herbrand $\psi$-functions}
Let us recall the definition of the (Herbrand) $\psi$-function $\psi_{L/K}$ for a finite Galois extension $L/K$ of a complete valuation field.
Let $G^{y}$ denote the $y$-th upper ramification subgroup of the Galois group $G(L/K)$ of $L/K.$
By the $\psi$-function of $L/K,$ we mean the real-valued function on the interval $[0,+\infty)$ defined as 
\[\psi_{L/K}(y)=\int_{0}^{y}\frac{\# G^{0}}{\# G^{r}}dr.\]
We extend $\psi_{L/K}$ to $[-1,+\infty)$ by letting $\psi_{L/K}(y)=y$ if $-1\leq y\leq 0.$
Then $\psi_{L/K}$ is a continuous and piecewise linear function on $[-1,+\infty).$
If $\psi_{L/K}$ is linear on some interval $[a,b]\subset [-1,\infty),$ then we have $G^{b}=G^{y}=G_{\psi_{L/K}(y)}$ for $y\in (a,b].$
By the ramification break of $L/K,$ we mean the real number $\psi_{L/K}(y),$ where $y\geq 0$ is the maximal real number such that $G^{y}\neq 1.$ 
By the wild ramification subgroup of $L/K,$ we mean the first lower ramification subgroup $G_{1},$ which is equal to the union of $G^{y}$ for $y>0.$

\begin{lemma}[\text{\rm{see e.g., \cite[Chapter~III,\,(3.3)]{FV}}}]\label{l221}
Let $L/M$ and $M/K$ be finite Galois extensions of complete valuation fields. Then 
\[\psi_{L/K}=\psi_{L/M}\circ \psi_{M/K}.\]
\end{lemma}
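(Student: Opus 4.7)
The identity is equivalent, upon taking inverses, to the transitivity $\varphi_{L/K}=\varphi_{M/K}\circ\varphi_{L/M}$ for the Herbrand $\varphi$-function $\varphi_{L/K}=\psi_{L/K}^{-1}$, defined via lower ramification groups by $\varphi_{L/K}(u)=\int_{0}^{u}[G_{0}:G_{r}]^{-1}\,dr$ for $u\geq 0$ and extended linearly on $[-1,0]$. I would prove this version and take inverses at the end. Both sides are continuous, piecewise linear functions on $[-1,+\infty)$ that agree on $[-1,0]$, so it suffices to match right derivatives at every $u\geq 0$.

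Set $G=G(L/K)$, $H=G(L/M)\trianglelefteq G$, so that $G/H=G(M/K)$. At a point $u\geq 0$ off the (finitely many) breakpoints, the right derivative of $\varphi_{L/K}$ is $1/[G_{0}:G_{u}]$, while by the chain rule the right derivative of $\varphi_{M/K}\circ\varphi_{L/M}$ is
\[
\frac{1}{[H_{0}:H_{u}]\cdot[(G/H)_{0}:(G/H)_{\varphi_{L/M}(u)}]}.
\]
The lemma therefore reduces to the multiplicative index identity
\[
[G_{0}:G_{u}]=[H_{0}:H_{u}]\cdot[(G/H)_{0}:(G/H)_{\varphi_{L/M}(u)}].
\]

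This identity follows from two standard facts. First, lower ramification groups restrict well to subgroups: $G_{u}\cap H=H_{u}$, immediate from the defining valuation inequality on a uniformizer of $L$. Second, Herbrand's theorem on quotients identifies the image of $G_{u}$ in $G/H$ as $(G/H)_{\varphi_{L/M}(u)}$, i.e.\ $G_{u}H/H=(G/H)_{\varphi_{L/M}(u)}$. Combining these gives $\#(G_{u}H/H)=\#G_{u}/\#H_{u}$, and taking the ratio with the case $u=0$ yields the displayed identity. The main obstacle is Herbrand's theorem on quotients itself, which rests on a delicate analysis of how the valuation $i_{G}(s)=v_{L}(s(\pi_{L})-\pi_{L})$ interacts with coset representatives of $H$ in $G$; here I would invoke it from the cited reference (Fesenko--Vostokov, Chapter III, (3.3)) and assemble the derivative computation to conclude.
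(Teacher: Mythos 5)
Your proposal is correct and matches the paper's treatment: the paper gives no proof of this lemma, quoting it directly from Fesenko--Vostokov, and your argument (reduce to transitivity of $\varphi=\psi^{-1}$, compare right derivatives off breakpoints, and use $G_{u}\cap H=H_{u}$ together with Herbrand's theorem $G_{u}H/H=(G/H)_{\varphi_{L/M}(u)}$, the latter invoked from the same reference) is exactly the standard derivation that citation contains. Note only that since the paper defines $\psi_{L/K}$ via upper groups, your identification $\psi_{L/K}=\varphi_{L/K}^{-1}$ rests on the relation $G^{y}=G_{\psi_{L/K}(y)}$, which the paper records immediately after its definition, so this is consistent.
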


The extension generated by the roots of the polynomial $X^{q}-X-\alpha$ (for some $\alpha$ in the base field) plays a key role in this section.
To obtain its $\psi$-function, we will need the following fact. It is a slight generalization of the function field case of \cite[Chapter~III,\,Proposition~2.5]{FV}.

\begin{proposition}\label{p221}
Let $K$ be a complete discrete valuation field of characteristic $p>0.$
Assume that $K$ contains $\Bbb{F}_{q},$ where $q=p^{s}$ for some $s.$
Let $v_{K}$ denote the normalized valuation.
Let $\alpha$ be an element of $K$ and $\lambda$ a root of $X^{q}-X-\alpha.$
If $v_{K}(\alpha)<0$ and $p\nmid v_{K}(\alpha),$ then the field extension $K(\lambda)/K$ is totally ramified of degree $q$ with Galois group isomorphic to $(\Bbb{Z}/p\Bbb{Z})^{s}.$ Its ramification break is $r\coloneqq -v_{K}(\alpha)$ and the $\psi$-function is  
\[\psi_{K(\lambda)/K}(y)=\begin{cases}
y&-1\leq y\leq r;\\
qy-(q-1)r &r\leq y.\end{cases}
\]
\end{proposition}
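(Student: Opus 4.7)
The plan is to build the proof in four logical stages: identify the roots and Galois structure, pin down the ramification index, compute the (unique) break in lower numbering via a carefully chosen uniformizer, and then convert to $\psi$ by the standard formula.

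First, I would observe that since $\mathbb{F}_{q}\subset K$ and $u^{q}=u$ for $u\in\mathbb{F}_{q}$, every $\lambda+u$ satisfies $(\lambda+u)^{q}-(\lambda+u)-\alpha=\lambda^{q}-\lambda-\alpha=0$. Hence the $q$ distinct elements $\{\lambda+u:u\in\mathbb{F}_{q}\}$ are all the roots of $X^{q}-X-\alpha$, so $L:=K(\lambda)$ is the splitting field, Galois over $K$ with group embedding in $(\mathbb{F}_{q},+)\cong(\mathbb{Z}/p\mathbb{Z})^{s}$ via $\sigma\mapsto\sigma(\lambda)-\lambda$. To show this embedding is surjective I would extend $v_{K}$ to $v_{L}$ and observe: if $v_{L}(\lambda)\geq 0$ then $v_{L}(\alpha)=v_{L}(\lambda^{q}-\lambda)\geq 0$, contradicting $v_{L}(\alpha)=e_{L/K}v_{K}(\alpha)<0$. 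Thus $v_{L}(\lambda)<0$, which forces $v_{L}(\lambda^{q})=qv_{L}(\lambda)<v_{L}(\lambda)$ and therefore $qv_{L}(\lambda)=e_{L/K}v_{K}(\alpha)=-e_{L/K}r$. Since $v_{L}(\lambda)\in\mathbb{Z}$ and $\gcd(r,q)=1$ (because $p\nmid r$ and $q=p^{s}$), we must have $q\mid e_{L/K}$; combined with $e_{L/K}\leq[L:K]\leq q$, this gives $[L:K]=e_{L/K}=q$, so $L/K$ is totally ramified of degree $q$ with Galois group $(\mathbb{Z}/p\mathbb{Z})^{s}$.

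Now $v_{L}(\lambda)=-r$ and $v_{L}(\pi_{K})=q$ for any uniformizer $\pi_{K}$ of $K$. Since $\gcd(q,r)=1$, Bezout yields positive integers $a,b$ with $bq-ar=1$; then $\pi_{L}:=\pi_{K}^{b}\lambda^{a}$ is a uniformizer of $L$. Reducing $bq-ar=1$ modulo $p$ gives $-ar\equiv 1\pmod{p}$, so in particular $p\nmid a$. For $u\in\mathbb{F}_{q}^{\times}$, let $\sigma_{u}$ be the automorphism sending $\lambda\mapsto\lambda+u$. The binomial expansion gives
\[
\sigma_{u}(\pi_{L})-\pi_{L}=\pi_{K}^{b}\sum_{k=1}^{a}\binom{a}{k}u^{k}\lambda^{a-k}.
\]
The $k=1$ summand $au\lambda^{a-1}$ has valuation $bq+(a-1)(-r)=bq-ar+r=r+1$; every $k\geq 2$ summand has valuation at least $bq-(a-k)r>r+1$. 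Since $p\nmid a$ and $u\neq 0$, there is no cancellation, so $v_{L}(\sigma_{u}(\pi_{L})-\pi_{L})=r+1$. This is the only potential sticking point in the argument — it relies crucially on $p\nmid a$, which is forced by the Bezout relation together with $p\nmid r$.

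With the convention $\sigma\in G_{i}\iff v_{L}(\sigma(\pi_{L})-\pi_{L})\geq i+1$, every nontrivial $\sigma_{u}$ lies in $G_{r}\setminus G_{r+1}$, so the lower ramification filtration is $G=G_{0}=\cdots=G_{r}\supsetneq G_{r+1}=1$. Plugging into $\varphi_{L/K}(u)=\int_{0}^{u}dt/[G_{0}:G_{t}]$ yields $\varphi_{L/K}(u)=u$ for $0\leq u\leq r$ and $\varphi_{L/K}(u)=r+(u-r)/q$ for $u\geq r$. Inverting $\varphi_{L/K}$ gives exactly the stated formula for $\psi_{L/K}$, and since $\psi_{L/K}(r)=r$, the (upper) break and hence the ramification break in the sense defined earlier equals $r$, completing the proof.
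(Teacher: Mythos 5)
Your proof is correct, and while it follows the same overall skeleton as the paper (roots $\lambda+u$ for $u\in\mathbb{F}_{q}$, the isomorphism $\sigma\mapsto\sigma(\lambda)-\lambda$ onto $\mathbb{F}_{q}$, total ramification of degree $q$ forced by $p\nmid v_{K}(\alpha)$, a single break, then passage to $\psi_{L/K}$), it takes a genuinely different route at the key step of locating the break. The paper reads the break off in one line as $v_{L}(\sigma(\lambda)\lambda^{-1}-1)=v_{L}(\sigma(\lambda)-\lambda)-v_{L}(\lambda)=r$, i.e.\ it evaluates the higher-unit filtration on $\lambda$ itself; this is quick, but it implicitly uses the fact that the break can be computed from $\sigma(x)/x$ for any $x$ with $p\nmid v_{L}(x)$ rather than from a uniformizer (essentially the statement borrowed from Fesenko--Vostokov, Chapter~III, Proposition~2.5). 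You instead manufacture an explicit uniformizer $\pi_{L}=\pi_{K}^{b}\lambda^{a}$ with $bq-ar=1$, observe that $p\nmid a$, and compute $v_{L}(\sigma_{u}(\pi_{L})-\pi_{L})=r+1$ by the binomial expansion, after which the standard criterion $\sigma\in G_{i}\iff v_{L}(\sigma(\pi_{L})-\pi_{L})\geq i+1$ (legitimate here because $\mathcal{O}_{L}=\mathcal{O}_{K}[\pi_{L}]$ for a totally ramified extension) gives $G_{0}=\cdots=G_{r}\supsetneq G_{r+1}=1$. Your determination of the ramification index, via $qv_{L}(\lambda)=-e_{L/K}r$ and $\gcd(q,r)=1$, is an equivalent repackaging of the paper's Newton-polygon computation of $v_{K}(\lambda)=-v_{K}(\alpha)/q$. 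What your route buys is a self-contained verification of exactly the point the paper leaves implicit, at the cost of the Bezout/uniformizer bookkeeping; the final inversion of $\varphi_{L/K}$ and the identification of the break with $\psi_{L/K}(r)=r$ agree with the paper's conventions.
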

\begin{proof}
Let $L$ be the splitting field of $X^{q}-X-\alpha.$ 
Then $L=K(\lambda)$ since the difference of any two roots of $X^{q}-X-\alpha$ belongs to $\Bbb{F}_{q}\subset K.$ 
Since $p\nmid v_{K}(\alpha)$ and $ v_{K}(\alpha)<0,$ we have by the Newton polygon  $v_{K}(\lambda)=-v_{K}(\alpha)/q,$ which implies that  the ramification index of $L/K$ is $q.$ 
Thus $L/K$ is a totally ramified Galois extension of degree $q$.  For any $\sigma\in G(L/K),$ we have $\sigma(\lambda)-\lambda\in \Bbb{F}_{q}.$
The element $\sigma$ is determined by $\sigma(\lambda)$ since $\lambda$ generates $L/K.$ Thus $\sigma \mapsto \sigma(\lambda)-\lambda$ defines an isomorphism $G(L/K)\to \Bbb{F}_{q}\cong (\Bbb{Z}/p\Bbb{Z})^{s}.$ The ramification break of $L/K$ is $v_{L}(\sigma(\lambda)\lambda^{-1}-1)=v_{L}(\sigma(\lambda)-\lambda)-v_{L}(\lambda)=r$ for a nontrivial $\sigma.$ 
The $\psi$-function $\psi_{L/K}$ is as the proposition describes.
\end{proof}

\subsection{Infinite primes}\label{s221}
From now on, let us resume the notation in Section~\ref{s21}.
Let $K$ denote the completion of $F$ at an infinite prime or a finite prime $v$ which does not divide $t.$
Let $\phi$ be a rank $2$ Drinfeld $\Bbb{F}_{q}[t]$-module over $K.$
Let $K_{n}$ denote $K(\phi[t^{n}])$ for each $n\geq 1.$
In the rest of this section, based on the results in Section~\ref{s21}, we are concerned with the $\psi$-function of the extension $K_{n}/K.$ 
Let us first consider the case where $v=v_{\infty}$ is infinite.
Recall $v_0 = v_\infty(t) < 0.$

\begin{lemma}\label{l2211}
Assume $v_{\infty}(\bm{j})<v_{0}q.$ 
Let $m$ be the integer satisfying $v_{\infty}(\bm{j})\in (v_{0}q^{m+1},v_{0}q^{m}].$ 
We have $\xi_{-n-1}\in K_{n}$ for $n\geq 1$ and $\xi_{n+1}\in K_{n}$ for $n\geq m,$ i.e., 
\[K_{n}(\xi_{-n-1})=K_{n}\text{ for }n\geq 1\text{ and }K_{n}(\xi_{n+1})=K_{n}\text{ for }n\geq m.\]
\end{lemma}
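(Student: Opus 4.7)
The plan is to combine Lemma~\ref{l2111} (the Newton polygons of $\phi_t(X)-\xi_{\pm n}$) with a standard Galois-theoretic uniqueness argument. The setup I would use is: $K_n/K$ is Galois, being the splitting field over $K$ of the separable polynomial $\phi_{t^n}(X)$, so $\xi_{-n},\xi_n\in K_n$ and hence $\phi_t(X)-\xi_{-n},\phi_t(X)-\xi_n\in K_n[X]$. Moreover, since $K$ is complete, the valuation $v_\infty$ extends uniquely to $K^{\text{sep}}$, so every $\sigma\in\mathrm{Gal}(K^{\text{sep}}/K_n)$ preserves $v_\infty$ and permutes the roots of $\phi_t(X)-\xi_{\pm n}$ while preserving their valuations.

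First I would handle $\xi_{-(n+1)}$. By Lemma~\ref{l2111}~(2), the Newton polygon of $\phi_t(X)-\xi_{-n}$ is $Q_{-n}P_0P_1P_2$ with three segments. The initial segment $Q_{-n}P_0$ has horizontal length $1-0=1$, so $\phi_t(X)-\xi_{-n}$ has a \emph{unique} root whose valuation equals the smallest slope $\mu(Q_{-n},P_0)$ negated, i.e., the largest root valuation; by construction this root is $\xi_{-(n+1)}$. For any $\sigma\in\mathrm{Gal}(K^{\text{sep}}/K_n)$, the image $\sigma(\xi_{-(n+1)})$ is a root of the same polynomial with the same valuation, so $\sigma(\xi_{-(n+1)})=\xi_{-(n+1)}$, giving $\xi_{-(n+1)}\in K_n$. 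For $\xi_{n+1}$ with $n\geq m$, I would apply exactly the same argument using Lemma~\ref{l2111}~(1): in that range the Newton polygon of $\phi_t(X)-\xi_n$ is $Q_nP_0P_1P_2$, again with initial segment $Q_nP_0$ of horizontal length $1$, so there is a unique root of maximum valuation, forcing $\xi_{n+1}\in K_n$.

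I do not expect a serious obstacle, since the slope and segment-length computations have already been carried out in Lemma~\ref{l2111}. The substantive point is simply the observation that an initial Newton polygon segment of horizontal length $1$ produces a unique root of largest valuation, which is therefore rational over the base field. The hypothesis $n\geq m$ in the positive-index case is indispensable: it is exactly what promotes the two-segment polygon $Q_nP_1P_2$ of the range $1\leq n\leq m-1$ into the three-segment polygon $Q_nP_0P_1P_2$ (by making $P_0$ a vertex). For $n<m$ the slowest slope belongs to the segment $Q_nP_1$ of horizontal length $q$, so the $q$ roots of maximum valuation are genuine Galois conjugates over $K_n$ and the argument breaks down exactly where expected.
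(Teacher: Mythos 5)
Your proof is correct and follows essentially the same route as the paper: both rest on Lemma~\ref{l2111}, namely that for the relevant indices the Newton polygon of $\phi_t(X)-\xi_{\pm n}$ has an initial segment $Q_{\pm n}P_0$ of horizontal length $1$, so the root of maximal valuation is unique and must be rational over $K_n$. The only (immaterial) difference is how that last rationality step is justified: the paper invokes the Newton-polygon factorization result \cite[Chapter~II, Proposition~6.4]{Neu} to get the linear factor $(X-\xi_{n+1})\in K_n[X]$, whereas you argue via uniqueness of the valuation extension and Galois-invariance, which is an equally standard formulation of the same fact.
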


\begin{proof}
By Lemma~\ref{l2111}~(1), for $n\geq m,$ the Newton polygon of $\phi_{t}(X)-\xi_{n}$ is $Q_{n}P_{0}P_{1}P_{2}$ having exactly three segments, where $Q_{n}$ and $P_{0}$ have horizontal coordinates $0$ and $1,$ respectively. 
We see that $\xi_{n+1}$ is the only root of $\phi_{t}(X)-\xi_{n}$ whose valuation is $-\mu(Q_{n},P_{0}).$ 
Thus due to \cite[Chapter~II,\,Proposition~6.4]{Neu}, we know $(X-\xi_{n+1})\in K_{n}[X]$ and thus $\xi_{n+1}\in K_{n}.$
One can show $\xi_{-n-1}\in K_{n}$ for $n\geq 1$ in the same way.
\end{proof}

We now study the extension $K_{n+1}/K_{n}$ for $n\leq m-1$ and need the following technical lemma.
Note that we have $\prod_{u\in \Bbb{F}_{q}}(X-u\cdot \xi_{-1})\in K[X]$ by \cite[Chapter~II,\,Proposition~6.4]{Neu}.

\begin{lemma}\label{l2212}
Assume $v_{\infty}(\bm{j})<v_{0}q.$
Put  
\begin{align}\eta(X)\coloneqq a_{1}X^{q}+tX.\label{f22110}
\end{align}
Let $K'$ denote the extension of $K$ generated by the roots of $\eta(X).$
Then $K'=K(\xi_{-1}).$
\end{lemma}

\begin{proof}
Our first goal is to show $K'\subset K(\xi_{-1}).$
Let $x_{i}'$ for $i=1, \ldots, q-1$ denote all nonzero roots of $\eta(X).$
Let $|-|$ denote the absolute value on $K^{\text{sep}}$ given by $q^{-v_{\infty}(-)}.$
Note that $K' = K(x_{i}')$ for all $i$ as $K$ contains all $(q-1)$-st roots of unity.
Due to Krasner's lemma (see \cite[p.\,152]{Neu}), it suffices to show that there exists some $i$ such that $|\xi_{-1}-x_{i}'|<|x_{i}'-x_{j}'|$ for any $j \neq i.$
Since $x_{i}'-x_{j}'$ for $i\neq j$ is a nonzero root of $\eta(X),$ we have $|x_{i}'-x_{j}'|=q^{\frac{v_{1}-v_{0}}{q-1}}.$ 
Consider 
\begin{align}\eta(\xi_{-1})=a_{1}\cdot \xi_{-1}\prod_{i=1}^{q-1}(\xi_{-1}-x_{i}')=a_{1}\cdot \xi_{-1}^{q}+t\cdot \xi_{-1}=-a_{2}\cdot \xi_{-1}^{q^{2}}.\nonumber\end{align}
Since 
\begin{align}v_{\infty}(a_{1}^{-1}a_{2}\cdot \xi_{-1}^{q^{2}-1})&=-v_{1}+v_{2}-(v_{1}-v_{0})(q+1)\nonumber\\
&=-v_{\infty}(\bm{j})+v_{0}q-(v_{1}-v_{0}),\nonumber
\end{align}
we have 
\[\left|\prod_{i=1}^{q-1}(\xi_{-1}-x_{i}')\right| = \left|a_{1}^{-1}a_{2}\cdot \xi_{-1}^{q^{2}-1}\right| = q^{v_{\infty}(\bm{j})-v_{0}q+(v_{1}-v_{0})}.\]
As $v_{\infty}(\bm{j}) < v_{0}q,$ there exists some $x_{i}'$ such that
\[|\xi_{-1} - x_{i}'| \leq  q^{\frac{v_{\infty}(\bm{j})-v_{0}q}{q-1}+\frac{v_{1}-v_{0}}{q-1}}<q^{\frac{v_{1}-v_{0}}{q-1}}=|x_{i}'-x_{j}'|,\]
as desired.

Conversely, the conjugates of $\xi_{-1}$ are of the form $u\cdot \xi_{-1}$ for $u\in \Bbb{F}_{q}^{\times}$ and hence $|\xi_{-1}-x_{i}'|<|\xi_{-1}-u\cdot \xi_{-1}|=q^{\frac{v_{1}-v_{0}}{q-1}}$ for $u \in \Bbb{F}_{q}\setminus \{0,1\}.$ 
Apply Krasner's lemma again and $K(\xi_{-1})\subset K'$ follows.
\end{proof}

We need the following modification of certain polynomials.
Assume $v_{\infty}(\bm{j})<v_{0}q.$ 
Let $m$ be the integer satisfying $v_{\infty}(\bm{j})\in (v_{0}q^{m+1},v_{0}q^{m}].$
We can choose an element $b$ in $K^{\prime \times}$ with $v_{\infty}(b)=\frac{v_{1}-v_{0}}{q-1}$
to modify $\phi_{t}(X),$ $\phi_{t}(X)-\xi_{n},$ $\eta(X),$ and $\eta(X)-\xi_{n}$ for $n < m,$ so that the coefficients of $X^{q}$ of the obtained polynomials are $1$:
\begin{align}\label{f2217}
\left. \begin{alignedat}{2}\Phi(X) & = b_{2}X^{q^{2}}+X^{q}+b_{0}X && \coloneqq b'\phi_{t}(X/b), 
\\
\Phi_{n}(X) & = b_{2}X^{q^2}+X^q+b_0X+c_{n} && \coloneqq b'(\phi_{t}(X/b)-\xi_{n}),\\
\Eta(X) & = X^{q}+b_{0}X && \coloneqq b'\eta(X/b), 
\\
\Eta_{n}(X) & = X^{q}+b_{0}X+c_{n} && \coloneqq b'(\eta(X/b)-\xi_{n})  
\end{alignedat}\right.
\end{align}
with $b'=b^{q}/a_{1}.$ 
Then
\[v_{\infty}(c_{n})=\frac{v_{\infty}(\bm{j})-v_{0}q^{n+1}}{(q-1)q^{n}},\,\,v_{\infty}(b_{0})=0,\text{ and }v_{\infty}(b_{2})=-v_{\infty}(\bm{j})+v_{0}q.\]
Notice that all nonzero roots of $\Eta(X)$ have valuation $0$ and belong to $K_{1}.$ 

In \cite[Proposition~3]{KL}, K\"olle and Schmid applied Krasner's lemma to study unramified or tamely ramified extensions of a number field. 
Their proposition roughly claims that two polynomials yield the same field extension if their Newton polygons are the same. 
The following lemma is the analogue for certain wildly ramified extensions. 

\begin{lemma}\label{l2213}
    Assume $v_{\infty}(\bm{j})\leq v_{0}q^{2}.$ Let $m\geq 2$ be the integer such that $v_{\infty}(\bm{j})\in (v_{0}q^{m+1},v_{0}q^{m}].$
    If $1\leq n< m,$ then any root $\xi'$ of the polynomial $\eta(X)-\xi_{n}$
    satisfies
    \[K_{n}(\xi')= K_{n+1}.\]
\end{lemma}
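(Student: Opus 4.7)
The plan is to adapt the Krasner's lemma argument used in the proof of Lemma~\ref{l2212}. By Proposition~\ref{p2111}, $\phi[t^{n+1}]$ has basis $\{\xi_{\pm 1},\ldots,\xi_{\pm(n+1)}\}$; by Lemma~\ref{l2211}, $\xi_{-(n+1)}\in K_n$; and the other basis elements except $\xi_{n+1}$ automatically lie in $\phi[t^n]\subset K_n$. Hence $K_{n+1}=K_n(\xi_{n+1})$, and it suffices to show $K_n(x')=K_n(\xi_{n+1})$ for some root $x'$ of $\eta(X)-\xi_n$. Rescaling by $b\in K_1^\times$ with $v_\infty(b)=(v_1-v_0)/(q-1)$, this reduces to producing some root of $\Eta_n(X)=X^q+b_0X+c_n\in K_n[X]$ from~(\ref{f2218}) that generates $K_n(b\xi_{n+1})$ over $K_n$. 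Moreover, since $\Eta_n(X)-\Eta_n(Y)=(X-Y)^q+b_0(X-Y)$ in characteristic $p$ and $V_{\Eta}\subset K_1\subset K_n$ by Lemma~\ref{l2212}(2), any two roots of $\Eta_n$ differ by an element of $K_n$; so all roots generate the same extension of $K_n$, and it is enough to treat one root.

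To set up Krasner, I would first pin down the relevant distances. By Proposition~\ref{p2111}, the $q$ roots of $\phi_t(X)-\xi_n$ of maximal valuation are exactly $\xi_{n+1}+u\xi_{-1}$ for $u\in\Bbb{F}_q$, and since $\xi_{-1}\in K_1\subset K_n$, every $K_n$-conjugate of $\xi_{n+1}$ has this form. Consequently $v_\infty\bigl(b\xi_{n+1}-b\tau(\xi_{n+1})\bigr)=v_\infty(b\xi_{-1})=0$ for every nontrivial $\tau\in G(K_{n+1}/K_n)$. Likewise, nonzero differences of roots of $\Eta_n$ lie in $V_{\Eta}$ and have valuation $0$. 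Both Krasner inequalities thus collapse to the single requirement that some root $x_{i_0}'$ of $\Eta_n$ satisfy $v_\infty(b\xi_{n+1}-x_{i_0}')>0$.

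To produce such an $x_{i_0}'$, I would use $\Phi_n=\Eta_n+b_2X^{q^2}$ and $\Phi_n(b\xi_{n+1})=0$ to obtain $\Eta_n(b\xi_{n+1})=-b_2(b\xi_{n+1})^{q^2}$, and then read off $v_\infty(b\xi_{n+1})=v_\infty(c_n)/q$ from the Newton polygon of $\Phi_n$. The factorization $\Eta_n(b\xi_{n+1})=\prod_{i=1}^{q}(b\xi_{n+1}-x_i')$ then yields
\[\sum_{i=1}^{q}v_\infty(b\xi_{n+1}-x_i')=v_\infty(b_2)+q\,v_\infty(c_n),\]
which, after substituting the explicit values $v_\infty(b_2)=-v_\infty(\bm{j})+v_0q$ and $v_\infty(c_n)=(v_\infty(\bm{j})-v_0q^{n+1})/((q-1)q^n)$, simplifies to a strictly positive quantity for $1\le n<m$ (using $v_\infty(\bm{j})<0$ and $v_0<0$). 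Since each summand is bounded below by $v_\infty(c_n)/q\le 0$, some summand must be strictly positive; two applications of Krasner's lemma then give $K_n(x_{i_0}')=K_n(b\xi_{n+1})=K_{n+1}$, and the reduction in the first paragraph extends this to every root $x'$ of $\eta(X)-\xi_n$.

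The main subtlety is the positivity check for $v_\infty(b_2)+q\,v_\infty(c_n)$; the identification of the $K_n$-conjugates of $\xi_{n+1}$ with a subset of $\{\xi_{n+1}+u\xi_{-1}:u\in\Bbb{F}_q\}$ and the reduction to a single root of $\Eta_n$ are both direct consequences of Section~\ref{s21} and Lemma~\ref{l2212}(2).
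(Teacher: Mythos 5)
Your proposal is correct and follows essentially the same route as the paper's proof: reduce to $K_{n+1}=K_{n}(\xi_{n+1})$ via Lemma~\ref{l2211}, pass to the rescaled polynomials $\Eta_{n}$ and $\Phi_{n}$, use $\Eta_{n}(b\xi_{n+1})=-b_{2}(b\xi_{n+1})^{q^{2}}$ and the positivity of its valuation to find a root of $\Eta_{n}$ whose distance to $b\xi_{n+1}$ has positive valuation, and then apply Krasner's lemma in both directions, using that nonzero differences of roots of $\Eta_{n}$ (elements of $V_{\Eta}$) and the differences $bu\cdot\xi_{-1}$ between $K_{n}$-conjugates of $b\xi_{n+1}$ all have valuation $0$. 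One small point: the assertion that every $K_{n}$-conjugate of $\xi_{n+1}$ is of the form $\xi_{n+1}+u\cdot\xi_{-1}$ follows not from $\xi_{-1}\in K_{n}$ but from the fact that conjugates over the complete field $K_{n}$ have the same valuation $v_{\infty}(\xi_{n+1})$ (equivalently, from the factorization $\prod_{u\in\Bbb{F}_{q}}(X-\xi_{n+1}-u\cdot\xi_{-1})\in K_{n}[X]$ that the paper invokes).
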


Due to this lemma, the extension $K_{n+1}/K_{n}$ for $n=1, \ldots, m-1$ is generated by a root of $a_{1}X^{q}+tX-\xi_{n}\in K_{n}[X].$

\begin{proof}
The roots of $\phi_{t}(X)-\xi_{n}$ whose valuations equal $v_{\infty}(\xi_{n+1})$ are $\xi_{n+1}+u\cdot \xi_{-1}$ for all $u\in \Bbb{F}_{q}.$ 
Due to \cite[Chapter~II,\,Proposition~6.4]{Neu}, we have \[\prod_{u\in\Bbb{F}_{q}}(X-\xi_{n+1}-u\cdot \xi_{-1})\in K_{n}[X].\]
(Note that this polynomial and $\eta(X) - \xi_{n}$ have the same Newton polygon.)
Since $K_{n+1}=K_{n}(\xi_{n+1})$ by Lemma~\ref{l2211}, $K_{n}(\xi_{n+1})$ is the splitting field of this polynomial. 
Then we are reduced to showing $K_{n}(\xi')=K_{n}(\xi_{n+1}).$
Let $x=b\xi_{n+1}.$
Let $x_{i}'$ for $i=1, \ldots, q$ denote the roots of $\Eta_{n}(X).$
The difference $b\xi'-x_{i}'$ belongs to $K' \subset K_{1},$ as it is a root of $\Eta(X).$ 
Since $K_{n}(x_{i}') = K_{n}(b\xi') = K_{n}(\xi'),$ it suffices to show $K_{n}(x_{i}') = K_{n}(x)$ for some $i.$

If there exists $i$ such that $|x-x_{i}'|<|x_{i}'-x_{j}'|$ for all $j\neq i,$ then we apply Krasner's lemma and obtain $K_{n}(x_{i}')\subset K_{n}(x).$
We have $|x_{i}'-x_{j}'|=1$ for all $i\neq j$ since $x_{i}'-x_{j}'$ is a nonzero root of $\Eta(X).$
It suffices to find a suitable root $x_{i}'$ of $\Eta_{n}(X)$ such that $|x-x_{i}'|<1.$
To know $|x-x_{i}'|,$ we consider the valuation of
\[\Eta_{n}(x)=\prod_{i=1}^{q}(x-x_{i}')=x^{q}+b_{0}x+c_{n}=-b_{2}x^{q^{2}},\]
where the rightmost equality comes from $\Phi_{n}(x)=0.$
We have
\begin{equation}\begin{split}v_{\infty}(b_{2}x^{q^{2}})&=(-v_{\infty}(\bm{j})+v_{0}q)+\frac{v_{\infty}(\bm{j})-v_{0}q^{n+1}}{(q-1)q^{n-1}}\\
&=-v_{\infty}(\bm{j})\left(1-\frac{1}{(q-1)q^{n-1}}\right)-v_{0}\frac{q}{q-1}.\end{split}\label{f312}\end{equation} 
Since $v_{\infty}(\bm{j})\leq v_{0}q^{2},$ we have 
\[v_{\infty}(b_{2}x^{q^{2}})\geq -v_{0}q^{2}\left(1-\frac{1}{q-1}\right)-v_{0}\frac{q}{q-1}>0,\] 
which induces
\[\left|\prod_{i=1}^{q}(x-x_{i}')\right|<1.\]
Hence there exists some $x_{i}'$ such that $|x-x_{i}'|<1.$

Conversely, all conjugates of $x=b\xi_{n+1}$ are of the form $b(\xi_{n+1}+u\cdot \xi_{-1})$ for $u\in \Bbb{F}_{q}.$ For $u \in \Bbb{F}_{q}^{\times},$ then $|b\xi_{n+1}-b(\xi_{n+1}+u\cdot \xi_{-1})|=1.$ 
Since $|x-x_{i}'|<1,$ Krasner's lemma implies $K_{n}(x)\subset K_{n}(x_{i}').$
\end{proof}

We must look into $K_{1}/K.$ 
Recall $K'=K(\xi_{-1})$ by Lemma~\ref{l2212}.
Let us look into the extension $K_{1}/K'.$ 
Due to $v_{\infty}(\xi_{1})=-\frac{v_{2}-v_{1}}{(q-1)q},$ it is natural to ask whether we can write $K_{1}/K$ as a compositum of extensions of degree dividing $q-1$ and of degree dividing $q.$

\begin{lemma}\label{l2214}
Assume $v_{\infty}(\bm{j})<v_{0}q.$ 
Let $K_{1}'$ be the splitting field over $K'$ of the degree $q$ polynomial 
\[\widetilde{\Eta}(X)=X^{q}-\beta X-\beta\in K'[X]\text{ with }\beta=\frac{\xi_{-1}^{q^{2}-1}a_{2}}{t}.\]
Then
\begin{itemize} 
\item
$K_{1}'/K'$ is a subextension of $K_{1}/K';$
\item
$K_{1}/K_{1}'$ is a compositum of Kummer extensions.
\end{itemize}
\end{lemma}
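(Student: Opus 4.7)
The plan is to exhibit a root of $\widetilde{\Eta}(X)$ explicitly inside $K_{1}$ via a rational change of variable in $\phi_{t}$, and then to observe that the same change of variable forces $\xi_{1}\in K_{1}'$, so that the two fields coincide and the second bullet becomes trivial.

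First I would substitute $X=\xi_{-1}Y$ into $\phi_{t}(X)$ and divide by $t\xi_{-1}$, obtaining the $\mathbb{F}_{q}$-linear polynomial
\[
g(Y) \;=\; Y + \frac{a_{1}\xi_{-1}^{\,q-1}}{t}\,Y^{q} + \beta\,Y^{q^{2}} \;\in\; K'[Y],
\]
whose kernel is the two-dimensional $\mathbb{F}_{q}$-space $V=\mathbb{F}_{q}\cdot 1 \oplus \mathbb{F}_{q}\cdot \alpha$ with $\alpha:=\xi_{1}/\xi_{-1}\in K_{1}$. Writing out $\prod_{a,b\in\mathbb{F}_{q}}(Y-a-b\alpha)$ by first taking the product over $a$ (which equals $(Y-b\alpha)^{q}-(Y-b\alpha)=Y^{q}-Y-b\gamma$ with $\gamma:=\alpha^{q}-\alpha$) and then over $b$ yields the factorization
\[
g(Y) \;=\; \beta\,(Y^{q}-Y)\bigl[(Y^{q}-Y)^{q-1}-\gamma^{\,q-1}\bigr], \qquad \gamma\in K_{1}.
\]
Comparing the coefficient of $Y$ on the two sides produces the key identity $\beta\,\gamma^{\,q-1}=1$.

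With this identity in hand, I would set $\lambda:=\alpha/\gamma\in K_{1}$ and verify directly that
\[
\lambda^{q}-\beta\lambda-\beta \;=\; \frac{\alpha^{q}-\alpha}{\gamma^{q}} - \frac{1}{\gamma^{\,q-1}} \;=\; \frac{\gamma}{\gamma^{q}} - \frac{1}{\gamma^{\,q-1}} \;=\; 0,
\]
so $\lambda$ is a root of $\widetilde{\Eta}(X)$ lying in $K_{1}$. Any two distinct roots of $\widetilde{\Eta}$ differ by an element whose $(q-1)$-th power equals $\beta$; since $\mu:=1/\gamma\in K_{1}$ satisfies $\mu^{\,q-1}=\beta$, the full root set equals $\{\lambda+u\mu:u\in\mathbb{F}_{q}\}\subset K_{1}$ (this is checked by expanding $(\lambda+u\mu)^{q}-\beta(\lambda+u\mu)-\beta=u\mu(\mu^{q-1}-\beta)=0$). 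Hence $K_{1}'\subset K_{1}$, which is the first bullet.

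For the second bullet, observe that $\alpha=\lambda/\mu$ with $\lambda,\mu\in K_{1}'$, so $\alpha\in K_{1}'$ and therefore $K_{1}=K'(\alpha)\subset K_{1}'$. Combined with the first bullet this gives $K_{1}=K_{1}'$, so $K_{1}/K_{1}'$ is the trivial extension and is vacuously a compositum of Kummer extensions. The main obstacle is spotting the right root $\lambda=\alpha/\gamma$; it is the identity $\beta\,\gamma^{\,q-1}=1$, arising from the explicit factorization of $g$, that makes this choice natural, after which everything reduces to a short algebraic verification.
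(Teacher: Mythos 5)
Your argument is correct, and it takes a genuinely different route from the paper's. The paper also normalizes by $X\mapsto \xi_{-1}X$, but then divides off $X^{q}-X$ from the resulting additive polynomial, passes to $\overline{\Theta}(X)=\sum_{i=1}^{q}X^{i}-1/\beta$ via $X\mapsto X^{q-1}$, and identifies the splitting field of $\overline{\Theta}$ with $K_{1}'$ through the substitution $\widetilde{\Eta}(X)=-\beta X^{q}\overline{\Theta}(1/X+1)$; the second bullet then comes from the fact that $K_{1}$ is recovered from that splitting field by adjoining $(q-1)$-st roots. You instead factor the normalized additive polynomial completely, extract the identity $\beta\gamma^{q-1}=1$ (equivalently $a_{2}(\xi_{1}^{q}\xi_{-1}-\xi_{1}\xi_{-1}^{q})^{q-1}=t$, the Moore-determinant relation) by comparing the coefficient of $Y$, and exhibit the full root set of $\widetilde{\Eta}$ explicitly as $\{(\alpha+u)/\gamma : u\in\Bbb{F}_{q}\}\subset K_{1}$. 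Your computation is right (I checked the factorization $\prod_{v\in V}(Y-v)=(Y^{q}-Y)^{q}-\gamma^{q-1}(Y^{q}-Y)$ and the verification that $(\alpha+u)/\gamma$ kills $X^{q}-\beta X-\beta$), and it buys strictly more than the lemma asserts: since the differences of roots give $\mu=1/\gamma\in K_{1}'$ and hence $\alpha\in K_{1}'$, you get $K_{1}'=K_{1}$ outright (using $K'=K(\xi_{-1})$ from Lemma~\ref{l2212}~(1), which is exactly the standing hypothesis $v_{\infty}(\bm{j})<v_{0}q-q+1$), so the second bullet becomes vacuous. This sharper conclusion does not conflict with the rest of the paper — it simply says $e_{1}=e_{K_{1}/K_{1}'}=1$ in Lemma~\ref{l2216}, which the paper never excludes — whereas the paper's proof only bounds $K_{1}/K_{1}'$ as a tame (Kummer) extension without deciding whether it is trivial. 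The one point worth making explicit in your write-up is that $K_{1}=K'(\alpha)$ and $\gamma\neq 0$ both rely on $\xi_{-1}\in K'$ and on $\xi_{-1},\xi_{1}$ being a basis of $\phi[t]$, i.e.\ on Lemma~\ref{l2212}~(1) and Proposition~\ref{p2111}, which hold under the lemma's assumption.
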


\begin{proof}
We put $b=\xi_{-1}^{-1}$ in the definition of $\Phi(X)$ in (\ref{f2217}) so that any element in $\Bbb{F}_{q}$ is a root of $\Phi(X).$ 
We have that $X^{q}-X=\prod_{u\in \Bbb{F}_{q}}(X-u)$ divides $\Phi(X).$
Let $\Theta(X)$ be the polynomial such that $\Theta(X)(X^{q}-X)=\Phi(X).$
Noticing that $-b_{0}-b_{2}=1$ by $\Phi(1)=0$ and $b_{0}/b_{2}=1/\beta,$ we have
\[\Theta(X)=b_{2}\left(X^{q(q-1)}+X^{(q-1)(q-1)}+\cdots+X^{q-1}
-1/\beta\right),\]
whose roots generate $K_{1}/K'.$

We consider $\overline{\Theta}(X)\coloneqq (X^{q}+X^{q-1}+\cdots+X)-1/\beta$ and the subextension $L$ of $K_{1}/K'$ generated by its roots. 
Then $K_{1}/L$ is generated by the $(q-1)$-st roots of $x$ for all $x\in L$ satisfying $\overline{\Theta}(x)=0,$
which implies that $K_{1}/L$ is a compositum of Kummer extensions. 

We prove $L=K_{1}'$ by showing that $L$ is the splitting field of $\widetilde{\Eta}(X).$
Notice
\[\sum_{i=1}^{q}X^{i}=\frac{X(X^{q}-1)}{X-1}=\frac{X(X-1)^{q}}{X-1}=X(X-1)^{q-1}.\] 
Hence $\overline{\Theta}(X+1)=X^{q}+X^{q-1}-1/\beta.$ 
Then 
\[\widetilde{\Eta}(X)=-\beta\cdot  X^{q}\cdot \overline{\Theta}\left(\frac{1}{X}+1\right).\] 
Thus $L=K_{1}',$ as desired.
\end{proof}

The difference of two roots of $\widetilde{\Eta}(X)$ is a root of $X^{q}-\beta X.$ 
However, it might not be in $K'.$ 
This is why the extension of $K'$ generated by one root of $\widetilde{\Eta}(X)$ might not be Galois in general. 

From now on, assume that the valuation $v_{\infty}(\bm{j})$ of the $j$-invariant is not divisible by $p.$ 
Since $v_{1}-v_{2}=v_{\infty}(\bm{j})-v_{1}q,$ we have $p\nmid v_{1}-v_{2}.$ 
Hence $q$ divides the ramification index of $K_{1}/K$ since $v_{\infty}(\xi_{1})=\frac{v_{1}-v_{2}}{(q-1)q}.$
Let $E$ be the integer such that $Eq$ is the ramification index of $K_{1}/K.$ 

We then apply Proposition~\ref{p221} to obtain the $\psi$-function of $K_{n}/K$ for all $n.$ 
Due to Lemma~\ref{l221}, we first consider the $\psi$-functions of $K_{1}/K$ and of $K_{n+1}/K_{n}$ as follows.

\begin{lemma}\label{l2216}
Assume $v_{\infty}(\bm{j})<v_{0}q$ and $p\nmid v_{\infty}(\bm{j}).$ 
Let $m$ be the integer satisfying $v_{\infty}(\bm{j})\in(v_{0}q^{m+1},v_{0}q^{m}).$ 
\begin{enumerate}[\rm{(}1)]
    \item 
Let $e_{0}\coloneqq e_{K'/K}$ and $e_{1}\coloneqq e_{K_{1}/K_{1}'}$ respectively be the ramification indices of $K'/K$ and $K_{1}/K_{1}'.$ Let $e'$ denote the ramification index of the extension of $K'$ generated by all roots of $X^{q}-\beta X.$ Then we have $E=e_{1}e'e_{0}$ and 
\begin{align}
\psi_{K_{1}/K}(y)=\begin{cases}
y & -1\leq y\leq 0;\\
Ey & 0\leq y\leq \dfrac{r_{1}}{E};\\
qEy-(q-1)r_{1} & \dfrac{r_{1}}{E}\leq y,
\end{cases}\nonumber\end{align}
where $r_{1}\coloneqq E\frac{-v_{\infty}(\bm{j})+v_{0}q}{q-1}.$
    \item 
\begin{enumerate}[\rm{(}i)]
\item
For $1\leq n\leq m,$ the ramification index of $K_{n}/K$ is $Eq^{n};$ 
\item
For $1\leq n\leq m-1,$ we have 
\[\psi_{K_{n+1}/K_{n}}(y)=\begin{cases}
y & -1\leq y\leq r_{n+1};\\
qy-(q-1)r_{n+1} & r_{n+1}\leq y,
\end{cases}\]
where $r_{n+1}\coloneqq E\frac{-v_{\infty}(\bm{j})+v_{0}q^{n+1}}{q-1}.$
\end{enumerate}
\end{enumerate}
\end{lemma}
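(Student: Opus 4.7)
The plan for part (1) is to decompose $K_{1}/K$ through the tower
$K\subset K'\subset K'(\gamma)\subset K_{1}'\subset K_{1},$
where $\gamma^{q-1}=\beta.$ To justify that this tower makes sense inside $K_{1}',$ I would observe that if $x_{1},\,x_{2}$ are two distinct roots of $\widetilde{\Eta}(X)=X^{q}-\beta X-\beta,$ then $x_{1}-x_{2}$ is a nonzero root of $X^{q}-\beta X=X(X^{q-1}-\beta),$ so $(x_{1}-x_{2})^{q-1}=\beta$ and $\gamma\coloneqq x_{1}-x_{2}\in K_{1}'.$ With this, $K_{1}'/K'$ factors through the Kummer step $K'(\gamma)/K'$ and a wild step $K_{1}'/K'(\gamma).$ The extensions $K'/K,$ $K'(\gamma)/K',$ and $K_{1}/K_{1}'$ are all tame---the first two as Kummer of exponent dividing $q-1,$ the last by Lemma~\ref{l2214}---so the only source of wild ramification is $K_{1}'/K'(\gamma).$ Once we verify below that this wild piece has degree $q,$ multiplicativity of ramification indices forces $Eq=e_{K_{1}/K}=e_{0}\cdot e'\cdot q\cdot e_{1},$ hence $E=e_{1}e'e_{0}.$

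To compute $\psi_{K_{1}/K},$ I would perform the Artin--Schreier reduction $X=\gamma Y$ which, after dividing by $\gamma^{q}=\beta\gamma,$ rewrites $\widetilde{\Eta}(X)=0$ as $Y^{q}-Y=1/\gamma.$ Using $v_{\infty}(\xi_{-1})=-(v_{1}-v_{0})/(q-1)$ together with $\bm{j}=a_{1}^{q+1}/a_{2},$ a direct calculation yields $v_{\infty}(\beta)=-v_{\infty}(\bm{j})+v_{0}q,$ which is positive by the hypothesis $v_{\infty}(\bm{j})<v_{0}q-q+1.$ Since $p\nmid v_{\infty}(\bm{j}),$ $p\mid v_{0}q,$ and $e_{0},\,e',\,q-1$ are all coprime to $p,$ the number $r^{*}\coloneqq v_{K'(\gamma)}(\gamma)=e'e_{0}(-v_{\infty}(\bm{j})+v_{0}q)/(q-1)$ is a positive integer coprime to $p.$ Proposition~\ref{p221} applied to $Y^{q}-Y=1/\gamma$ then gives that $K_{1}'/K'(\gamma)$ is totally ramified of degree $q$ with ramification break $r^{*}.$ Finally, I would compose the four $\psi$-functions along the tower via Lemma~\ref{l221}: on $y\geq 0$ the three tame pieces multiply $y$ successively by $e_{0},\,e',\,e_{1},$ while the wild piece bends at $y=r^{*}.$ One checks that this bend pulls back to $y=r^{*}/(e'e_{0})=r_{1}/E$ and that $e_{1}(q-1)r^{*}=(q-1)r_{1},$ yielding the stated formula.

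For part (2), I would induct on $n,$ with the base case $n=1$ of (i) being the definition of $E.$ For the inductive step and for (ii), Lemma~\ref{l2213} identifies $K_{n+1}=K_{n}(x')$ where $x'$ is any root of $\eta(X)-\xi_{n},$ equivalently (after the $b$-rescaling) of $\Eta_{n}(X)=X^{q}+b_{0}X+c_{n}.$ Picking any nonzero $\xi\in V_{\Eta}\subset K_{1}\subset K_{n}$ (so that $\xi^{q-1}=-b_{0}$) and substituting $X=\xi Y$ followed by division by $\xi^{q}$ reduces $\Eta_{n}(X)$ to the Artin--Schreier form $Y^{q}-Y=\alpha_{n}$ with $\alpha_{n}\coloneqq -c_{n}/\xi^{q}\in K_{n}.$ Since $v_{\infty}(\xi)=0,$ we have $v_{\infty}(\alpha_{n})=v_{\infty}(c_{n})=(v_{\infty}(\bm{j})-v_{0}q^{n+1})/((q-1)q^{n}),$ which is negative for $n\leq m-1$ because $v_{0}<0$ and $q^{n+1}\leq q^{m}.$ By the inductive $e_{K_{n}/K}=Eq^{n},$ we get $v_{K_{n}}(\alpha_{n})=E(v_{\infty}(\bm{j})-v_{0}q^{n+1})/(q-1),$ and the same arithmetic as in part (1) (using $p\nmid v_{\infty}(\bm{j}),$ $p\mid v_{0}q^{n+1},$ $p\nmid E,$ $p\nmid q-1$) shows $p\nmid v_{K_{n}}(\alpha_{n}).$ Proposition~\ref{p221} then yields $K_{n+1}/K_{n}$ totally ramified of degree $q$ (completing the induction for (i)) with ramification break $r_{n+1}=E(-v_{\infty}(\bm{j})+v_{0}q^{n+1})/(q-1)$ and the stated $\psi$-function.

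The main obstacle I anticipate is the bookkeeping in part (1): the tower has four layers, and one must carefully track how the single bend of the wild $\psi_{K_{1}'/K'(\gamma)}$ transforms under the tame compositions on both sides to produce exactly one bend of $\psi_{K_{1}/K}$ at $y=r_{1}/E.$ A minor but pertinent subtlety is that the extension of $K'$ generated by a single root of $\widetilde{\Eta}(X)$ need not be Galois---its Galois closure over $K'$ adjoins $\gamma$---which is precisely why the intermediate field $K'(\gamma)$ must be introduced. By contrast, part (2) reduces cleanly to Proposition~\ref{p221} once the Artin--Schreier substitution $X=\xi Y$ has been performed.
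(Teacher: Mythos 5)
Your proposal is correct and takes essentially the same route as the paper: the paper also inserts the intermediate Kummer field generated by the roots of $X^{q}-\beta X$ (your $K'(\gamma)$, the paper's $K''$), performs the same rescaling $\widehat{\Eta}(X)=\omega^{-q}\widetilde{\Eta}(\omega X)=X^{q}-X-\omega^{-1}$ to Artin--Schreier form, applies Proposition~\ref{p221} to get the degree-$q$ wild piece with break $r^{*}$, and composes the four $\psi$-functions via Lemma~\ref{l221}. Your part (2) likewise matches the paper's induction through Lemma~\ref{l2213} and Proposition~\ref{p221}; the only cosmetic difference is that you normalize $\Eta_{n}$ by the substitution $X=\xi Y$ with $\xi\in V_{\Eta}$ instead of re-choosing $b$ so that $b_{0}=-1.$
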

\begin{proof}
We show (1). Due to Lemma~\ref{l2214}, the extension $K_{1}/K$ is decomposed into the tower
\[\xymatrix@=1.0em{K\ar@{-}[r]&K'\ar@{-}[r]&K_{1}'\ar@{-}[r]&K_{1}.}\]
Due to Lemma~\ref{l2212}, we know that $K'$ is the splitting field of $a_{1}X^{q}+tX$ over $K$ and the extension $K'/K$ is Kummer.
By Lemma~\ref{l2214}, the extension $K_{1}/K_{1}'$ is a compositum of Kummer extensions.
Hence $p\nmid e_{0},$ $p\nmid e_{1},$ and we have
\[\psi_{K'/K}(y)=\begin{cases}
y&-1\leq y\leq 0;\\
e_{0}y&0\leq y,
\end{cases}\text{ and }
\psi_{K_{1}/K_{1}'}(y)=\begin{cases}
y&-1\leq y\leq 0;\\
e_{1}y&0\leq y.
\end{cases}
\]
Then it suffices to know $\psi_{K_{1}'/K'}.$
Let $K''/K'$ denote the Kummer extension generated by the roots of $X^{q}-\beta X.$ 
It is a subextension of $K_{1}'/K'$ because the difference of two roots of $\widetilde{\Eta}(X)$ is a root of $X^{q}-\beta X.$
Its $\psi$-function is
\[\psi_{K''/K'}(y)=\begin{cases}
y&-1\leq y\leq 0;\\
e'y&0\leq y.
\end{cases}\]

Consider the polynomial $\widehat{\Eta}(X)=\omega^{-q}\cdot \widetilde{\Eta}(\omega \cdot X)=X^{q}-X-\omega^{-1}\in K''[X],$ where $\omega$ is a nonzero root of $X^{q}-\beta X.$
Then the extension $K_{1}'/K''$ is generated by one root of $\widehat{\Eta}(X).$ 
Because $p\nmid -v_{K''}(\omega^{-1})$ and $-v_{K''}(\omega^{-1})=e'e_{0}\frac{-v_{\infty}(\bm{j})+v_{0}q}{q-1}>0$ with $v_{K''}=e'e_{0}v_{\infty},$ we can apply Proposition~\ref{p221} to $\widehat{\Eta}(X).$
Thus $K_{1}'/K''$ is a degree $q$ totally ramified Galois extension. 
This implies $E=e_{1}e'e_{0}.$ The $\psi$-function of $K_{1}'/K''$ is 
\[\psi_{K_{1}'/K''}(y)=\begin{cases}
y&-1\leq y\leq v_{K''}(\omega);\\
qy-(q-1)v_{K''}(\omega)&v_{K''}(\omega)\leq y.
\end{cases}\]
Finally, (1) follows from $\psi_{K_{1}/K}(y)=\psi_{K_{1}/K_{1}'}\circ \psi_{K_{1}'/K''}\circ \psi_{K''/K'}\circ \psi_{K'/K}(y).$

We show (i) of (2).  
The case $n=1$ is known.
Assume that (i) is valid for $n.$ 
To show that  $K_{n+1}/K$ has ramification index $Eq^{n+1},$ it suffices to show that the ramification index of $K_{n+1}/K_{n}$ is $q.$ 
Recall that the extension $K_{n+1}/K_{n}$ is generated by a root of $\Eta_{n}(X)=X^{q}+b_{0}X+c_{n}$ in (\ref{f2217}). 
We can take $b_{0}=-1$ in $\Eta_{n}(X)$ by putting $b'=b^{q}/a_{1}$ and $1/b$ to be a nonzero root of $\eta(X).$ 
Because $p\nmid -v_{K_{n}}(c_{n})$ and $-v_{K_{n}}(c_{n})=r_{n+1}>0$ with $v_{K_{n}}=Eq^{n}v_{\infty},$ we can apply Proposition~\ref{p221} to $\Eta_{n}(X).$
Hence $K_{n+1}/K_{n}$ is a degree $q$ totally ramified Galois extension and this shows (i). 
We also know from Proposition~\ref{p221} that 
\[\psi_{K_{n+1}/K_{n}}(y)=\begin{cases}
y & -1\leq y\leq r_{n+1};\\
qy-(q-1)r_{n+1} & r_{n+1}\leq y
\end{cases}\]
and (ii) of (2) follows.
\end{proof}

With the notation and assumptions in this lemma, we have the decomposition of $K_{n}/K$ for $1\leq n\leq m,$ 
\[\xymatrix@=1.0em{K\ar@{-}[r]^{e_{0}}&K'\ar@{-}[r]^{e'}&K''\ar@{-}[r]^{q}&K_{1}'\ar@{-}[r]^{e_{1}}&K_{1}\ar@{-}[r]^{q}&K_{2}\ar@{-}[r]&\cdots\ar@{-}[r]&K_{n-1}\ar@{-}[r]^{\,\,\,\,q}&K_{n}},\]
where each number indicates the ramification index of the corresponding extension.

Due to Lemma~\ref{l221}, we can show by induction that
\begin{lemma}\label{l2215}
Assume $v_{\infty}(\bm{j})<v_{0}q$ and $p\nmid v_{\infty}(\bm{j}).$ 
Let $m$ be the integer satisfying $v_{\infty}(\bm{j})\in(v_{0}q^{m+1},v_{0}q^{m}).$ 
Put $r_{n}=E\frac{-v_{\infty}(\bm{j})+v_{0}q^{n}}{q-1}$ for any positive integer $n$ as in Lemma~\text{\rm{\ref{l2216}}}. 
Then for $n\leq m,$ we have
\begin{equation}\psi_{K_{n}/K}(y)=\begin{cases}
y &-1\leq y\leq 0;\\
Ey & 0\leq y\leq \dfrac{r_{n}}{E};\\
q^{j}Ey-\sum_{i=0}^{j-1}q^{i}(q-1)r_{n-i} & \begin{split}&\frac{r_{n-j+1}}{E}\leq y\leq \frac{r_{n-j}}{E}\\
&\quad\text{for }j=1, \ldots, \,n-1;
\end{split}\\
q^{n}Ey-\sum_{i=0}^{n-1}q^{i}(q-1)r_{n-i} & 
\dfrac{r_{1}}{E}\leq y.
\end{cases}\nonumber\end{equation}
\end{lemma}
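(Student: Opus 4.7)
The plan is to proceed by induction on $n$, repeatedly applying Lemma~\ref{l221} to compose $\psi_{K_{n+1}/K} = \psi_{K_{n+1}/K_n} \circ \psi_{K_n/K}$, where the outer factor is supplied by Lemma~\ref{l2216}~(2)(ii). The base case $n=1$ is exactly Lemma~\ref{l2216}~(1): the middle family of pieces indexed by $j=1,\ldots,n-1$ is vacuous when $n=1$, leaving only the three pieces of slopes $1$, $E$, and $qE$ recorded there.

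For the inductive step, assume the formula has been established for some $1\leq n\leq m-1$. By Lemma~\ref{l2216}~(2)(ii), $\psi_{K_{n+1}/K_n}$ is the identity on $[-1,r_{n+1}]$ and has slope $q$ on $[r_{n+1},+\infty)$, so the crux is to locate the unique $y$ satisfying $\psi_{K_n/K}(y)=r_{n+1}$. Since $v_0<0$ forces $r_{n+1}<r_n$, this threshold lies in the interior of the linear piece $[0,r_n/E]$ on which $\psi_{K_n/K}$ has slope $E$, so it corresponds to $y=r_{n+1}/E$. This introduces a new breakpoint at $r_{n+1}/E$ while preserving the existing breakpoints $r_n/E,\ldots,r_1/E$.

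Then I compute the composition piece by piece. On $[0,r_{n+1}/E]$ both factors are affine with constant term zero and the result has slope $E$. On the new middle piece $[r_{n+1}/E,r_n/E]$ the result is $q(Ey)-(q-1)r_{n+1}$, matching the $j=1$ case of the target formula with $n$ replaced by $n+1$. On each preexisting piece $[r_{n-j+1}/E,r_{n-j}/E]$ with $j=1,\ldots,n-1$, the slope $q^j E$ is multiplied by $q$ and the constant $\sum_{i=0}^{j-1}q^i(q-1)r_{n-i}$ becomes $q\sum_{i=0}^{j-1}q^i(q-1)r_{n-i}+(q-1)r_{n+1}$; reindexing $i\mapsto i+1$ and absorbing the new term collapses this sum into $\sum_{i=0}^{j}q^i(q-1)r_{(n+1)-i}$, reproducing the target formula with the shifted index $j+1$. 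The unbounded piece $[r_1/E,+\infty)$ is handled in the same way to yield the final slope $q^{n+1}E$.

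The main obstacle is purely bookkeeping: tracking how indices shift under composition and verifying that the new breakpoint $r_{n+1}/E$ is both strictly smaller than $r_n/E$ and lies inside the initial slope-$E$ region of $\psi_{K_n/K}$. Once the monotonicity $r_{n+1}<r_n$ (immediate from $v_0<0$) is noted, the rest is routine piecewise-linear arithmetic together with a single reindexing identity.
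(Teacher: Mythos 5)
Your proof is correct and is exactly the argument the paper intends: the lemma is stated there with only the remark that it follows by induction via Lemma~\ref{l221}, and your composition $\psi_{K_{n+1}/K}=\psi_{K_{n+1}/K_n}\circ\psi_{K_n/K}$ with base case Lemma~\ref{l2216}~(1) and outer factor Lemma~\ref{l2216}~(2)(ii) is precisely that route. The only point worth making explicit is that the new breakpoint $r_{n+1}/E$ lies in $(0,r_n/E)$ because, besides $r_{n+1}<r_n$ (which you noted), one also has $r_{n+1}>0$ for $n+1\leq m$, since $v_{\infty}(\bm{j})<v_{0}q^{m}\leq v_{0}q^{n+1}$.
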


We are to consider the higher ramification subgroups of $G(K_{m}/K).$
Let us assume the conditions in Lemma~\ref{l2215} from now on.

We would like to know the structure of the wild ramification subgroup of $K_{1}/K$ and how this group acts on the generators $\xi_{-1}$ and $\xi_{1}$ of $K_{1}/K.$
Since $K''/K$ and $K_{1}/K_{1}'$ are tamely ramified, the natural projection $G(K_{1}/K)\twoheadrightarrow G(K_{1}'/K)$ induces an isomorphism
$G(K_{1}/K)_{1} \cong G(K_{1}'/K)_{1} = G(K_{1}'/K'')_{1}.$
From the $\psi$-functions, we replace the indices as 
\begin{align}G(K_{1}/K)_{r_{1}} \cong G(K_{1}'/K)_{\frac{r_{1}}{e_{1}}} = G(K_{1}'/K'')_{\frac{r_{1}}{e_{1}}}.\label{f22112}\end{align}
From Proposition~\ref{p221}, we know $G(K_{1}'/K'')_{\frac{r_{1}}{e_{1}}}\cong (\Bbb{Z}/p\Bbb{Z})^{s}$ and thus $G(K_{1}/K)_{r_{1}}\cong (\Bbb{Z}/p\Bbb{Z})^{s}.$

For an element $\sigma\in G(K_{1}/K)_{r_{1}},$ note that $\sigma$ is characterized by $\sigma(\xi_{1})$ because the isomorphism (\ref{f22112}) indicates that $\sigma$ fixes $K''.$  
Then we have
\begin{align}
v_{K_{1}}(\sigma(\xi_{1})-\xi_{1})&= v_{K_{1}}(\sigma(\xi_{1})\xi_{1}^{-1}-1)+v_{K_{1}}(\xi_{1})\nonumber\\
&\geq r_{1}+v_{K_{1}}(\xi_{1})\nonumber\\
&= Eq(-\frac{v_{1}-v_{0}}{q-1})
=v_{K_{1}}(\xi_{-1})
.\nonumber\end{align}
From $\sigma(\xi_{1})-\xi_{1}\in \phi[t]$ and Proposition~\ref{p2111}, we have $\sigma(\xi_{1})=\xi_{1}+u\cdot \xi_{-1}$ for some $u\in \Bbb{F}_{q}.$ 
This defines a morphism $G(K_{1}/K)_{r_{1}}\to \Bbb{F}_{q};$ $\sigma\mapsto u,$ which is injective. 
Comparing the cardinalities of the domain and codomain, we conclude that this map is an isomorphism. 

For an integer $2\leq l\leq m,$ let us consider the wild ramification subgroup of $K_{l}/K_{l-1}$ and how this group acts on the generator $\xi_{l}$ of $K_{l}/K_{l-1}$ 
(note that $\xi_{-l}\in K_{l-1}$ by Lemma~\ref{l2211}). 
We know from Proposition~\ref{p221} that the wild ramification subgroup of $K_{l}/K_{l-1}$ is $G(K_{l}/K_{l-1})_{r_{l}}$ which is isomorphic to $(\Bbb{Z}/p\Bbb{Z})^{s}.$
For an element $\sigma\in G(K_{l}/K_{l-1})_{r_{l}},$ we similarly have 
\begin{align}
    v_{K_{l}}(\sigma(\xi_{l})-\xi_{l})&= v_{K_{l}}(\sigma(\xi_{l})\xi_{l}^{-1}-1)+v_{K_{l}}(\xi_{l})\nonumber\\
    &\geq r_{l}+v_{K_{l}}(\xi_{l})\nonumber\\
    &= \left(-Eq^{l}\cdot \frac{v_{1}-v_{0}}{q-1}-v_{K_{l}}(\xi_{l})\right)+v_{K_{l}}(\xi_{l})=v_{K_{l}}(\xi_{-1}), \nonumber
\end{align}
which shows $\sigma(\xi_{l})=\xi_{l}+u\cdot \xi_{-1}$ for some $u\in \Bbb{F}_{q}.$ This defines an isomorphism $G(K_{l}/K_{l-1})_{r_{l}}\to \Bbb{F}_{q};$ $\sigma\mapsto u.$

We are now ready to state the main theorem for infinite primes.
\begin{theorem}\label{p2211}
Assume $v_{\infty}(\bm{j})<v_{0}q$ and $p\nmid v_{\infty}(\bm{j}).$ 
Let $m$ be the integer such that $v_{\infty}(\bm{j})\in(v_{0}q^{m+1},v_{0}q^{m}).$ 
\begin{enumerate}[\rm{(}1)]

\item 
For integers $l$ and $n$ satisfying $1\leq l\leq m$ and $l\leq n,$ put $R_{n}^{l}\coloneqq \psi_{K_{n}/K}(\frac{r_{l}}{E}).$ 
We set $K_{0}\coloneqq K.$ 
Then the natural projection $G(K_{n}/K_{l-1})\to G(K_{l}/K_{l-1})$ induces an isomorphism $G(K_{n}/K_{l-1})_{R_{n}^{l}}\cong G(K_{l}/K_{l-1})_{R_{l}^{l}}\cong (\Bbb{Z}/p\Bbb{Z})^{s}.$ 
Let $\sigma_{l,u}$ for $u\in \Bbb{F}_{q}$ be the element in $G(K_{l}/K_{l-1})_{R_{l}^{l}}$ characterized by 
\[\sigma_{l,u}(\xi_{-l})=\xi_{-l}\text{ and }\sigma_{l,u}(\xi_{l})=\xi_{l}+u\cdot \xi_{-1}.\]
Denote $\sigma_{l,u}\in G(K_{n}/K_{l-1})_{R_{n}^{l}}$ again its image under the isomorphism. 
Then $\sigma_{l,u}(\xi_{-n})=\xi_{-n}$ and $\sigma_{l,u}(\xi_{n})=\xi_{n}+u\cdot \xi_{-(n-l+1)}.$  

\item
The wild ramification subgroup $G(K_{m}/K)_{1}$ of $K_{m}/K$ is isomorphic to $(\Bbb{Z}/p\Bbb{Z})^{sm}.$
\end{enumerate}
\end{theorem}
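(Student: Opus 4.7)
The plan is to prove part (1) by induction on $n\ge l$ for each fixed $l$, with the base case $n=l$ being the content of the paragraphs immediately preceding the theorem statement. The inductive step has two components: an isomorphism of ramification subgroups, and the explicit formulas for the action of $\sigma_{l,u}$.

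For the isomorphism, I would invoke Herbrand's theorem (Lemma \ref{l221}) applied to the tower $K_{l-1}\subset K_l\subset K_n$. From Lemma \ref{l2216}~(2)(ii), the function $\psi_{K_l/K_{l-1}}$ is the identity on $[-1,r_l]$, so the wild ramification subgroup $G(K_l/K_{l-1})_{r_l}\cong (\Bbb{Z}/p\Bbb{Z})^s$ also sits at break $r_l$ in upper numbering. A short check using Lemma \ref{l2215} shows $\psi_{K_{l-1}/K}(r_l/E)=r_l$, because $r_l/E$ lies in the initial linear region $[0,r_{l-1}/E]$ on which $\psi_{K_{l-1}/K}$ has slope $E$; combining this with Herbrand gives $\psi_{K_n/K_{l-1}}(r_l)=R_n^l$ and hence $G(K_n/K_{l-1})_{R_n^l}=G(K_n/K_{l-1})^{r_l}$. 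Since upper numbering is compatible with quotients by normal subgroups, the projection onto $G(K_l/K_{l-1})$ sends this group surjectively onto $G(K_l/K_{l-1})^{r_l}=G(K_l/K_{l-1})_{r_l}$; injectivity follows from the fact that an element of the source is determined by its image on $\xi_l$, which already lies in the quotient.

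For the action, I apply $\phi_t$: from $\sigma_{l,u}(\xi_{-n})=\xi_{-n}$ (inductive hypothesis) and $\phi_t(\xi_{-(n+1)})=\xi_{-n}$ I obtain $\sigma_{l,u}(\xi_{-(n+1)})-\xi_{-(n+1)}\in \phi[t]$; similarly, using $\phi_t(\xi_{-(n-l+2)})=\xi_{-(n-l+1)}$ and the inductive formula for $\sigma_{l,u}(\xi_n)$, I obtain $\sigma_{l,u}(\xi_{n+1})-\xi_{n+1}-u\cdot \xi_{-(n-l+2)}\in \phi[t]$. To show these $\phi[t]$-valued differences vanish, I use the standard fact that $\sigma\in G_i$ iff $v_L(\sigma(x)/x-1)\ge i$ for all $x\in L^\times$, applied with $\sigma=\sigma_{l,u}\in G(K_{n+1}/K_{l-1})_{R_{n+1}^l}$ and $x=\xi_{\pm(n+1)}$; this yields
\[v_{K_{n+1}}\bigl(\sigma_{l,u}(\xi_{\pm(n+1)})-\xi_{\pm(n+1)}\bigr)\ge R_{n+1}^l+v_{K_{n+1}}(\xi_{\pm(n+1)}).\]
The main obstacle is the arithmetic check that this lower bound strictly exceeds $v_{K_{n+1}}(\xi_{-1})$, which by Proposition \ref{p2111} and Lemma \ref{l2111} is the largest valuation of a nonzero element of $\phi[t]$ that could appear as a correction. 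This reduces to an explicit inequality involving $R_{n+1}^l$ (obtained by evaluating $\psi_{K_n/K}$ at $r_l/E$ via Lemma \ref{l2215}) and the valuations in Lemma \ref{l2111}; once verified, it forces the $\phi[t]$-difference to be exactly $0$ in the first case and exactly $u\cdot \xi_{-(n-l+2)}$ absorbed into the stated formula in the second.

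For part (2), the total ramification index of $K_m/K$ is $Eq^m$ with $p\nmid E$ (Lemma \ref{l2216}~(2)(i)), so $|G(K_m/K)_1|=q^m=p^{sm}$. Part (1) with $n=m$ supplies the elements $\sigma_{l,u}$ for $1\le l\le m$ and $u\in \Bbb{F}_q$, all lying in $G(K_m/K)_1$. The explicit formulas $\sigma_{l,u}(\xi_{-n})=\xi_{-n}$ and $\sigma_{l,u}(\xi_n)=\xi_n+u\cdot \xi_{-(n-l+1)}$ show directly that any two $\sigma_{l,u}$ and $\sigma_{l',u'}$ commute on the $\Bbb{F}_q$-basis of $\phi[t^m]$, so the subgroup they generate is abelian and receives an injective homomorphism from $\bigoplus_{l=1}^m \Bbb{F}_q$; an order comparison then yields $G(K_m/K)_1\cong (\Bbb{Z}/p\Bbb{Z})^{sm}$.
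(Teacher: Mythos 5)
Your overall strategy is the paper's: Herbrand's theorem and compatibility of the ramification filtration with quotients for the group-theoretic isomorphism, then ``the difference lies in $\phi[t]$'' plus a valuation bound coming from membership in $G(\cdot)_{R_n^l}$ for the explicit action. The one genuine gap is your justification of injectivity of $G(K_{n}/K_{l-1})_{R_{n}^{l}}\to G(K_{l}/K_{l-1})_{r_{l}}$: the assertion that ``an element of the source is determined by its image on $\xi_{l}$'' is precisely what is not yet available at that point. A general element of $G(K_{n}/K_{l-1})$ is certainly not determined by its action on $\xi_{l}$ (all of $G(K_{n}/K_{l})$ fixes $\xi_{l}$), and for elements of the ramification subgroup this property is exactly the content of the action formulas, which in your ordering are proved \emph{after} (and using) the isomorphism; as written the step is circular. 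The repair is short. The kernel of the projection is $G(K_{n}/K_{l-1})_{R_{n}^{l}}\cap G(K_{n}/K_{l})$, which by compatibility of the lower numbering with subgroups \cite[Chapter~IV, Proposition~2]{Se} equals $G(K_{n}/K_{l})_{R_{n}^{l}}$; since every upper break of $K_{n}/K_{l}$ is at most $r_{l+1}<r_{l}$ (each layer $K_{j+1}/K_{j}$ has break $r_{j+1}$ by Lemma~\ref{l2216}, and $\varphi$-functions only decrease indices), its largest lower break is at most $\psi_{K_{n}/K_{l}}(r_{l+1})<\psi_{K_{n}/K_{l}}(r_{l})=R_{n}^{l}$, so the kernel is trivial. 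The paper avoids this by descending one layer at a time, using only that the single break $r_{n}$ of $K_{n}/K_{n-1}$ satisfies $r_{n}<R_{n}^{l}$ for $n>l$. Alternatively, you could reorder your induction: prove the action formulas for \emph{every} element of $G(K_{n}/K_{l-1})_{R_{n}^{l}}$ (your valuation argument only needs that its restriction to $K_{n-1}$ lands in the level-$(n-1)$ group, which is quotient compatibility, not injectivity), and then injectivity falls out of the formulas.

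Two further points. The decisive arithmetic inequality is named but never checked; it is the computational heart of the proof and is exactly the paper's display (\ref{f22113}): one finds $R_{n}^{l}+v_{K_{n}}(\xi_{n})=v_{K_{n}}(\xi_{-(n-l+1)})$, which exceeds $v_{K_{n}}(\xi_{-1})$ by Proposition~\ref{p2111}, while for the $\xi_{-n}$-part the needed bound is immediate from $R_{n}^{l}>0$. Also your bookkeeping fails for $l=1$: $K_{0}=K$ and $K_{1}/K$ has a tame part, so its wild subgroup sits at upper index $r_{1}/E$ (Lemma~\ref{l2216}~(1)), not $r_{1}$, and the identity $\psi_{K_{l-1}/K}(r_{l}/E)=r_{l}$ holds only for $l\geq 2$ via Lemma~\ref{l2215}; the argument still goes through with $r_{1}/E$ in place of $r_{l}$ there. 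Part (2) is correct and essentially the paper's argument, with the order count $\#G(K_{m}/K)_{1}=q^{m}$ made explicit.
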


\begin{proof}
(1) We first show the results for $l=1.$ 
The case $n=1$ is known. 
Assume (1) for $n-1.$ 
If $n\geq m+1,$ then $K_{n}=K_{n-1}$ by Lemma~\ref{l2211}, so the claim follows similarly as in the case $n\leq m.$ 
Assume $n\leq m.$ 
We have $G(K_{n}/K)_{R_{n}^{1}}\cap G(K_{n}/K_{n-1})=G(K_{n}/K_{n-1})_{R_{n}^{1}}$ by \cite[Chapter~IV, Proposition~2]{Se}.
As the ramification break of $K_{n}/K_{n-1}$ is $r_{n}$ and $r_{n}<R_{n}^{1},$ we have $G(K_{n}/K)_{R_{n}^{1}}\cap G(K_{n}/K_{n-1})=1.$ 
Notice $G(K_{n}/K)^{r_{1}/E}=G(K_{n}/K)_{R_{n}^{1}}.$ 
Hence $G(K_{n}/K)^{r_{1}/E}=G(K_{n}/K)^{r_{1}/E}G(K_{n}/K_{n-1})/G(K_{n}/K_{n-1}).$
By \cite[Chapter~IV, Proposition~14]{Se}, we have an isomorphism $G(K_{n}/K)^{r_{1}/E}\cong G(K_{n-1}/K)^{r_{1}/E}.$
By the $\psi$-functions, this is the isomorphism $G(K_{n}/K)_{R_{n}^{1}}\cong G(K_{n-1}/K)_{R_{n-1}^{1}}.$ 
The first claim follows.

As for the Galois action, by induction hypothesis, we know $\phi_{t}(\sigma_{1,u}(\xi_{-n})-\xi_{-n})=0$ and thus $\sigma_{1,u}(\xi_{-n})-\xi_{-n}\in \phi[t].$ Similarly, we have $\sigma_{1,u}(\xi_{n})-\xi_{n}-u\cdot \xi_{-n}\in \phi[t].$
So $\sigma_{1,u}(\xi_{-n})-\xi_{-n}=u'\cdot \xi_{-1}+u''\cdot \xi_{1}$ for $u',$ $u''\in \Bbb{F}_{q}.$ If $u''\neq 0,$ we obtain
\begin{equation}\begin{split}R_{n}^{1}\leq v_{K_{n}}(\sigma_{1,u}(\xi_{-n})\xi_{-n}^{-1}-1)&=v_{K_{n}}(\sigma_{1,u}(\xi_{-n})-\xi_{-n})-v_{K_{n}}(\xi_{-n})\\
&=v_{K_{n}}(\xi_{1})-v_{K_{n}}(\xi_{-n})\\
&=\frac{E}{q-1}\left(v_{\infty}(\bm{j})q^{n-1}+v_{0}((n-1)q^{n+1}-nq^{n})\right)<0,\end{split}
\nonumber
\end{equation}
which is a contradiction.
Similarly, we can show $u'=0$ and thus $\sigma_{1,u}(\xi_{-n})=\xi_{-n}.$
For $\sigma_{1,u}(\xi_{n}),$ we have
\begin{equation}\begin{split}
&\,\, v_{K_{n}}(\sigma_{1,u}(\xi_{n})-\xi_{n})\\
=&\,\, v_{K_{n}}(\sigma_{1,u}(\xi_{n})\xi_{n}^{-1}-1)+v_{K_{n}}(\xi_{n})\\
\geq &\,\, R_{n}^{1}+v_{K_{n}}(\xi_{n})\\
=&\,\, \frac{E}{q-1}\left(-v_{\infty}(\bm{j})-v_{0}((n-1)q^{n+1}-nq^{n})\right)-Eq^{n}\left(\frac{v_{2}+v_{1}(q^{n}-q-1)}{(q-1)q^{n}}\right)\\
=&\,\, Eq^{n}\left(-v_{0}(n-1)-\frac{v_{1}-v_{0}}{q-1}\right)=v_{K_{n}}(\xi_{-n}).\end{split}\label{f22113}
\end{equation}
Since $v_{\infty}(\xi_{-n})>v_{\infty}(\xi_{-1})>v_{\infty}(\xi_{1})$ by Proposition~\ref{p2111}, we have $\sigma_{1,u}(\xi_{n})=\xi_{n}+u\cdot \xi_{-n}.$

Then we show the case for all $l.$ 
We again use induction.
Similar to the proof in the case $l=1,$ we have the isomorphism $G(K_{n}/K_{l-1})_{R_{n}^{l}}\cong G(K_{n-1}/K_{l-1})_{R_{n-1}^{l}}$ by $R_{n}^{l}>r_{n}.$
We can show $\sigma_{l,u}(\xi_{-n})-\xi_{-n},$ $\sigma_{l,u}(\xi_{n})-\xi_{n}-u\cdot \xi_{-(n-l+1)}\in \phi[t].$
Similar calculations in the case $l=1$ show that they vanish.

(2) From Lemma~\ref{l2215}, the wild ramification subgroup $G(K_{m}/K)_{1}$ is equal to $G(K_{m}/K)_{r_{m}}.$
By (1), it is generated by $\{\sigma_{l,u}\,|\,1\leq l\leq m,\,\,u\in \Bbb{F}_{q}\}.$ 
For a basis $\{\xi_{-m}, \ldots, \xi_{-1}, \xi_{m}, \ldots, \xi_{1}\}$ of $\phi[t^{m}]$ with the order according to the valuations, we can identify each $\sigma_{l,u}$ as the representation matrix 
\[\begin{pmatrix}I_{m}&0\\
u\cdot A_{m,l}&I_{m}
\end{pmatrix}
\]
with respect to this basis. 
Here $I_{m}$ denotes the $m\times m$ identity matrix and $A_{m,l}$ is the $m\times m$ matrix defined by $(\delta_{i,j-l+1})_{ij}$ with the Kronecker delta $\delta.$ 
This gives a monomorphism $G(K_{m}/K)_{r_{m}}\to \text{GL}_{2m}(\Bbb{F}_{q}).$ Clearly its image is isomorphic to $(\Bbb{Z}/p\Bbb{Z})^{sm}.$
\end{proof}

\subsection{Finite primes}\label{s222}
This section deals with the finite prime case and is carried out in parallel with the previous section.
Let $K$ be the completion of $F$ at a finite prime $v_{f}$ not dividing $t.$
Assume $v_{f}(\bm{j})<0.$
Due to Proposition~\ref{p2121}~(1), we know $K_{n+1}=K_{n}(\xi_{n+1},\xi_{-n-1}).$
Put $K_{n,-}\coloneqq K_{n}(\xi_{-n-1})$ (resp.\ $K_{n,+}\coloneqq K_{n}(\xi_{n+1})$) and it is the splitting field of $\phi_{t}(X)-\xi_{-n}$ (resp.\ $\phi_{t}(X)-\xi_{n}$) $\in K_{n}[X]$.
\begin{lemma}\label{l2221}
Assume $v_{f}(\bm{j})<0.$
\begin{enumerate}[\rm{(}1)]
\item Let $K'$ denote the splitting field of $\eta(X)\coloneqq a_{1}X^{q}+tX.$
Then $K'=K(\xi_{-1}).$
\item For all $n\geq 1$ and any root $x_{-}'$ of the polynomial $\eta(X)-\xi_{-n}$, we have $K_{n}(x_{-}')= K_{n,-}$.
For all $n\geq 1$ and any root $x_{+}'$ of the polynomial $\eta(X)-\xi_{n}$, we have $K_{n}(x_{+}')= K_{n,+}$, unless $q=2$ and $n=1$.
\item
Let $K_{1}'$ be the splitting field of the degree $q$ polynomial $\widetilde{\Eta}(X)=X^{q}-\beta X-\beta\in K'[X]$ with $\beta=\frac{\xi_{-1}^{q^{2}-1}a_{2}}{t}.$ Then
\begin{itemize}
\item
$K_{1}'/K'$ is a subextension of $K_{1}/K';$
\item $K_{1}/K_{1}'$ is a compositum of Kummer extensions.
\end{itemize}
\end{enumerate}
\end{lemma}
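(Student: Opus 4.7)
The plan is to adapt the arguments of Lemmas~\ref{l2212}, \ref{l2213}, and \ref{l2214} to the finite-prime setting, exploiting the simplification $v_0 = v_f(t) = 0$. Throughout, the main tool is Krasner's lemma applied to well-chosen modifications of $\eta(X)$ and $\phi_t(X)$, together with the Newton polygon data collected in Proposition~\ref{p2121}(1).

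For (1), I would run the Krasner argument of Lemma~\ref{l2212}(1) directly, without any rescaling. Since the nonzero roots $x_1',\ldots,x_{q-1}'$ of $\eta(X) = a_1 X^q + tX$ are the $(q-1)$-st roots of $-t/a_1$, they satisfy $v_f(x_i') = -v_1/(q-1) = v_f(\xi_{-1})$ and $v_f(x_i' - x_j') = -v_1/(q-1)$ for $i \neq j$. Using $\phi_t(\xi_{-1}) = 0$ I would compute
\[
\prod_{i=1}^{q-1}(\xi_{-1} - x_i') \;=\; -a_1^{-1} a_2 \, \xi_{-1}^{q^2-1},
\]
whose valuation is $-v_f(\bm{j}) - v_1 > -v_1$ precisely because $v_f(\bm{j}) < 0$. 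This strict inequality forces one factor $\xi_{-1} - x_i'$ to have valuation strictly greater than $-v_1/(q-1)$, so Krasner gives $x_i' \in K(\xi_{-1})$, and the Kummer structure of $K'/K$ yields $K' \subset K(\xi_{-1})$. The reverse inclusion follows from the same lemma applied to the conjugates $u\xi_{-1}$, $u \in \Bbb{F}_q^\times$, whose pairwise distances have valuation $-v_1/(q-1)$.

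For (2), I would imitate Lemma~\ref{l2213} symmetrically in $\pm$. Choosing $b$ with $v_f(b) = -v_f(\xi_{-1})$, I would form modified polynomials $\Phi_{\pm n}(X) \coloneqq b'(\phi_t(X/b) - \xi_{\pm n})$ and $\Eta_{\pm n}(X) \coloneqq b'(\eta(X/b) - \xi_{\pm n})$ whose $X^q$-coefficient is $1$. Differences of roots of $\Eta_{\pm n}$ lie in $V_{\Eta} \subset K_1 \subset K_n$ and have valuation $0$. Evaluating $\Eta_{\pm n}$ at the scaled image of $\xi_{\pm(n+1)}$ and using $\Phi_{\pm n}(b\xi_{\pm(n+1)}) = 0$ to rewrite the result in terms of the leading coefficient $b_2$, the hypothesis $v_f(\bm{j}) < 0$ yields a valuation inequality showing that some root $x_i'$ of $\Eta_{\pm n}$ satisfies $v_f(b\xi_{\pm(n+1)} - x_i') > 0$. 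Krasner then gives $K_n(x') = K_n(\xi_{\pm(n+1)}) = K_{n,\pm}$. Unlike the infinite case, where only one direction needed this treatment (by Lemma~\ref{l2211}, one side lay automatically in $K_n$), here both signs must be handled; but the Newton polygons for $\phi_t(X) - \xi_n$ and $\phi_t(X) - \xi_{-n}$ in Proposition~\ref{p2121}(1) are parallel enough that one proof covers both cases verbatim.

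For (3), I would repeat Lemma~\ref{l2214} line by line. Setting $b = \xi_{-1}^{-1}$ ensures every $u \in \Bbb{F}_q$ is a root of $\Phi(X)$, so $X^q - X \mid \Phi(X)$; write $\Phi(X) = \Theta(X)(X^q - X)$, explicitly identify $\Theta(X) = b_2(X^{q(q-1)} + \cdots + X^{q-1} - 1/\beta)$ from the normalizations $-b_0 - b_2 = 1$ and $b_0/b_2 = 1/\beta$, and let $L \subset K_1$ be the subextension over $K'$ generated by the roots of $\overline{\Theta}(X) = X^q + X^{q-1} + \cdots + X - 1/\beta$. The identity $\sum_{i=1}^{q} X^i = X(X-1)^{q-1}$ in characteristic $p$ gives $\overline{\Theta}(X+1) = X^q + X^{q-1} - 1/\beta$, so $\widetilde{\Eta}(X) = -\beta X^q \overline{\Theta}(1/X + 1)$ and hence $L = K_1'$. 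Finally, $K_1/K_1'$ is obtained by adjoining $(q-1)$-st roots of elements of $K_1'$, hence is a compositum of Kummer extensions.

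The main obstacle I anticipate is verifying that all Krasner-type estimates in (1) and (2) go through under the single hypothesis $v_f(\bm{j}) < 0$, rather than the strictly stronger $v_\infty(\bm{j}) < v_0 q - q + 1$ that was required in the infinite case: because $v_0 = 0$, the rescaling argument that forced a technical inequality in the infinite case is unnecessary, and $v_f(\bm{j}) < 0$ supplies directly the strict inequality needed to locate a sufficiently close root. The bookkeeping for both signs in (2), while symmetric, is where I would most carefully verify the valuation comparisons.
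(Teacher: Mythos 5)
Your route is exactly the paper's: the paper proves this lemma simply by declaring it a verbatim adaptation of Lemmas~\ref{l2212}, \ref{l2213} and \ref{l2214}, and your treatment of (1), of (3), and of the minus half of (2) is a correct instance of that adaptation. In (1) the product of the $q-1$ Krasner factors has valuation $-v_{f}(\bm{j})-v_{1}>-v_{1}$ while each factor has valuation at least $-v_{1}/(q-1)$, so one factor beats the root spacing strictly; in the minus half of (2) one has $v_{f}(b\xi_{-(n+1)})=0$ and the product has valuation $-v_{f}(\bm{j})>0$; and (3) is purely the algebraic manipulation of Lemma~\ref{l2214}, valid here because (1) now only needs $v_{f}(\bm{j})<0.$

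The gap is in the plus half of (2), where your claim that $v_{f}(\bm{j})<0$ ``supplies directly the strict inequality'' degenerates. With $v_{f}(b)=\frac{v_{1}}{q-1}$ one has $v_{f}(b\xi_{n+1})=\frac{v_{f}(\bm{j})}{(q-1)q^{n+1}}$ and $v_{f}(b_{2})=-v_{f}(\bm{j}),$ so
\[
v_{f}\bigl(\Eta_{n}(b\xi_{n+1})\bigr)=v_{f}\bigl(-b_{2}(b\xi_{n+1})^{q^{2}}\bigr)=-v_{f}(\bm{j})\left(1-\frac{1}{(q-1)q^{n-1}}\right),
\]
which is strictly positive except when $q=2$ and $n=1,$ where it is exactly $0.$ In the infinite-prime prototype (Lemma~\ref{l2213}) this borderline case is rescued by the extra summand $-v_{0}\frac{q}{q-1}>0,$ which vanishes here because $v_{0}=v_{f}(t)=0.$ The failure is not merely a weak estimate: for $q=2,$ $n=1,$ writing $\alpha_{i}=\xi_{2}-x_{i}'$ for the two roots $x_{1}',x_{2}'$ of $\eta(X)-\xi_{1},$ one gets $\alpha_{1}+\alpha_{2}=t/a_{1}$ of valuation $-v_{1}$ and $\alpha_{1}\alpha_{2}=-a_{1}^{-1}a_{2}\xi_{2}^{4}$ of valuation $-2v_{1},$ which forces $v_{f}(\alpha_{1})=v_{f}(\alpha_{2})=-v_{1},$ exactly the valuation of $x_{1}'-x_{2}';$ so Krasner's lemma is genuinely inapplicable in either direction, and the equality $K_{1}(x')=K_{1,+}$ for $q=2$ requires a separate argument (or the exclusion of this case). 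Relatedly, your remark that ``one proof covers both cases verbatim'' should be tempered: the minus case is uniformly unproblematic, while the plus case carries the $n$-dependent factor displayed above and is where the single hypothesis $v_{f}(\bm{j})<0$ is not quite enough as stated.
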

\begin{remark}\label{r2221}
Let us warn that the extension $\varinjlim_{n} K_{n}/K$ is not finite.
Hence there is no analogue of Lemma~\ref{l2211}.
\end{remark}

\begin{proof}
Because the proof of this lemma is essentially the same as those of Lemmas~\ref{l2212}, \ref{l2213}, and \ref{l2214}, we only give an outline.
For (1), let $x_{i}'$ for $i=1,\ldots,q-1$ denote the nonzero roots of $\eta(X).$
Then $|x_{i}'-x_{j}'| = q^{\frac{v_{1}}{q-1}}$ for $i \neq j.$
Since 
\[\eta(\xi_{-1}) = a_{1}\xi_{-1}\prod_{i=1}^{q-1}(\xi_{-1} - x_{i}') = -a_{2}\xi_{-1}^{q^{2}},\] 
we have
\[\left|\prod_{i=1}^{q-1} (\xi_{-1} - x_{i}')\right| = \left|a_{1}^{-1}a_{2}\cdot \xi_{-1}^{q^{2}-1}\right|=q^{v_{f}(\bm{j})+v_{1}}.\]
As $v_{f}(\bm{j})<0,$ there exists some $x_{i}'$ such that \[|\xi_{-1} - x_{i}'| \leq q^{\frac{v_{f}(\bm{j})+v_{1}}{q-1}} < q^{\frac{v_{1}}{q-1}} = |x_{i}' - x_{j}'|.\]
By Krasner's lemma, we have $K' \subset K(\xi_{-1}).$
Note that the conjugates of $\xi_{-1}$ are $u \cdot \xi_{-1}$ for $u \in \Bbb{F}_{q}^{\times}.$
The inverse inclusion follows from $|\xi_{-1} - x_{i}'| < |\xi_{-1} - u \cdot \xi_{-1}|$ for $u \in \Bbb{F}_{q} \setminus \{0,1\}$ and Krasner's lemma.

We use the following polynomials in $K_{n}[X]$ as in (\ref{f2217}) to show (2) and (3): 
\begin{alignat}{2}
\Phi(X)&=b_{2}X^{q^{2}}+X^{q}+b_{0}X&&\coloneqq b'(\phi_{t}(X/b)),\nonumber\\
\Phi_{n}(X)&=b_{2}X^{q^{2}}+X^{q}+b_{0}X+c_{n}&&\coloneqq b'(\phi_{t}(X/b)-\xi_{n}),\nonumber\\
\Phi_{-n}(X)&=b_{2}X^{q^{2}}+X^{q}+b_{0}X+c_{-n}&&\coloneqq b'(\phi_{t}(X/b)-\xi_{-n}),\nonumber\\
\Eta(X)&=X^{q}+b_{0}X&&\coloneqq b'(\eta(X/b)),\nonumber\\
\Eta_{n}(X)&=X^{q}+b_{0}X+c_{n}&&\coloneqq b'(\eta(X/b)-\xi_{n}),\nonumber\\
\Eta_{-n}(X)&=X^{q}+b_{0}X+c_{-n}&&\coloneqq b'(\eta(X/b)-\xi_{-n}).\nonumber
\end{alignat}
Here we choose $b\in K^{\prime \times}$ satisfying $v_{f}(b)=\frac{v_{1}}{q-1}$ and put $b'=b^{q}/a_{1}.$
The splitting fields of 
$\Phi_{\pm n}(X)$ and $\Eta_{\pm n}(X)$ are respectively $K_{n,\pm}$ and $K_{n}
(x_{\pm}').$
For $\Phi(X),$ we have $v_{f}(b_{0}),$ $v_{f}(1),$ $v_{f}(b_{2})$ being respectively
\[0,\,\,0,\,\,-v_{f}(\bm{j}).\]
For $\Phi_{n}(X),$ we have $v_{f}(c_{n}) = \frac{v_{f}(\bm{j})}{(q-1)q^{n}}.$
For $\Phi_{-n}(X),$ we have $v_{f}(c_{-n})=0.$

We show (2). 
Put $x = b \xi_{-(n+1)}.$
Let $x_{i}'$ for $i=1,\ldots,q$ denote the roots of $\Eta_{-n}(X).$
We have $|x_{i}'-x_{j}'| = 1$ for $i \neq j$ as $x_{i}' - x_{j}'$ is a nonzero root of $\Eta(X).$
Since 
\[\Eta_{-n}(x) = \prod_{i=1}^{q}(x-x_{i}') = - b_{2}x^{q^{2}},\] we have 
\[\left|\prod_{i=1}^{q}(x-x_{i}')\right| = \left|b_{2}x^{q^{2}}\right| = q^{v_{f}(\bm{j})}.\]
As $v_{f}(\bm{j}) < 0,$ there exists some $x_{i}'$ such that $|x - x_{i}'| < 1 = |x_{i}' - x_{j}'|$ for all $j \neq i.$
Krasner's lemma implies that the splitting fields of $\Phi_{-n}(X)$ and $\Eta_{-n}(X)$ are the same.

Next, put $x = b\xi_{n+1}.$
Let $x_{i}'$ for $i=1,\ldots,q$ denote the roots of $\Eta_{n}(X).$
We have $|x_{i}'-x_{j}'| = 1$ for $i\neq j.$
By considering $\Eta_{n}(x),$ we obtain again 
\[\left|\prod_{i=1}^{q}(x-x_{i}')\right| = \left|b_{2}x^{q^{2}}\right|.\]
Note that we have \[v_{f}(b_{2}x^{q^{2}}) = - v_{f}(\bm{j})\left(1-\frac{1}{(q-1)q^{n-1}}\right) > 0\]
unless $q = 2$ and $n = 1.$
Hence 
\[\left|\prod_{i=1}^{q}(x-x_{i}')\right|<1.\]
and some $x_{i}'$ satisfies $|x-x_{i}'| < 1 = |x_{i}' - x_{j}'|$ for $j \neq i.$
Krasner's lemma implies that the splitting fields of  $\Phi_{n}(X)$ and $\Eta_{n}(X)$ are the same.

The proof of (3) is exactly the same as that of Lemma~\ref{l2214}.
\end{proof}

\begin{remark}
This is a remark on $q=2,$ $n=1$ case of the proof.
In this case, our method of proving $K_{n}(x')= K_{n,+}$ in Lemma~\ref{l2221}~(2) breaks down since $\left|\prod_{i=1}^{q}(x-x_{i}')\right| = 1.$
Moreover, there is no root $x_i'$ of $\Eta_{1}(X)$ such that $|x-x_i'|<1.$
Indeed, we have $|x-x_1'||x-x_2'| = 1$ from $v_{f}(b_{2}x^{q^{2}})=0.$
Suppose that there is $i$ such that $|x-x_i'|<1.$
We may assume $i=1.$
Then we obtain a contradictory inequality
\[ 1<|x-x_2'|=|(x-x_1')-(x_1'-x_2')| \leq \max\{|x-x_1'|, |x_1'-x_2'|\}=1. \]
\end{remark}

Assume $p \nmid v_{f}(\bm{j})$ from now on.
We work out the $\psi$-functions of $K_{1}/K$ and $K_{n+1}/K_{n}.$
Note that $q$ divides the ramification index of $K_{1}/K$ since $v_{f}(\xi_{1})=\frac{v_{f}(\bm{j})-v_{1}q}{(q-1)q}.$
Let $E$ be the integer such that $Eq$ is the ramification index of $K_{1}/K.$

\begin{lemma}\label{l2223}
Assume $v_{f}(\bm{j}) < 0,$ $p \nmid v_{f}(\bm{j}).$
Put $r \coloneqq E\frac{-v_{f}(\bm{j})}{q-1}.$
\begin{enumerate}[\rm{(}1)]
    \item 
Put $e_{0}\coloneqq e_{K'/K}$ and $e_{1}\coloneqq e_{K_{1}/K_{1}'}$ respectively to be the ramification indices of $K'/K$ and $K_{1}/K_{1}'.$
Let $e'$ denote the ramification index of the extension $K''$ of $K'$ generated by all roots of $X^{q} - \beta X.$
Then we have $E=e_{1}e'e_{0}$ and 
\[\psi_{K_{1}/K}(y) = \begin{cases}y & -1 \leq y \leq 0;\\
Ey & 0 \leq y \leq \dfrac{r}{E};\\
qEy - (q-1)r & \dfrac{r}{E} \leq y.
\end{cases}\]
    \item Assume $q \neq 2.$
\begin{enumerate}[\rm{(}i)]
    \item For $n \geq 1,$ the ramification index of $K_{n}/K$ is $Eq^{n};$
    \item For $n\geq 1,$ we have 
\[\psi_{K_{n+1}/K_{n}}(y) = \begin{cases} y & -1 \leq y \leq r;\\
qy - (q-1)r & r \leq y.\end{cases}\]
\end{enumerate}

\end{enumerate}
\end{lemma}
\begin{proof}
As the extensions $K'/K$ and $K''/K'$ are Kummer and the extension $K_{1}/K_{1}'$ is a compositum of Kummer extensions, we have
\[\psi_{K''/K}(y) = \begin{cases}
y & -1 \leq y \leq 0;\\
e_{0}e'y & 0 \leq y,
\end{cases}\text{ and }\psi_{K_{1}/K_{1}'}(y) = \begin{cases}
y & -1 \leq y \leq 0;\\
e_{1}y & 0 \leq y.
\end{cases}\]

Consider the polynomial $\widehat{\Eta}(X)=\omega^{-q}\cdot \widetilde{\Eta}(\omega \cdot X)=X^{q}-X-\omega^{-1}\in K''[X],$ where $\widetilde{\Eta}(X)$ is the polynomial defined in Lemma~\ref{l2221}~(3) and $\omega$ is a nonzero root of $X^{q}-\beta X$ with valuation $\frac{-v_{f}(\bm{j})}{q-1}.$
Then $K_{1}'/K''$ is generated by one root of $\widehat{\Eta}(X)\in K''[X].$ 
Since $p\nmid -v_{K''}(\omega^{-1})$ and  $-v_{K''}(\omega^{-1})>0$ with  $v_{K''}=e'e_{0}v_{f},$ we can apply Proposition~\ref{p221} to $\widehat{\Eta}(X).$ 
Thus we know $K_{1}'/K''$ is a degree $q$ totally ramified Galois extension with ramification break $v_{K''}(\omega).$
So $E=e_{1}e'e_{0}.$
Note $v_{K''}(\omega) = Er/e_{1}.$ 
We have the $\psi$-function 
\begin{align}\psi_{K_{1}'/K''}(y)=\begin{cases}y & -1\leq y\leq \dfrac{Er}{e_{1}};\\
qy-(q-1)\dfrac{Er}{e_{1}} & \dfrac{Er}{e_{1}}\leq y.
\end{cases}
\nonumber\end{align}
By Lemma~\ref{l221}, (1) follows.

We show (i) of (2).
The case $n=1$ is known.
Assume that the claim holds for $n.$
We show that $K_{n+1}/K$ has ramification index $Eq^{n+1}.$ 
It suffices to show that the ramification index of $K_{n+1}/K_{n}$ is $q.$ 
By Lemma~\ref{l2221}~(2), $K_{n+1}/K_{n,+}$ is the extension of $K_{n,+}$ generated by a root of $\Eta_{-n}(X).$ 
We can apply  \cite[Chapter~II,\,Proposition~3.2]{FV} to a root of $\Eta_{-n}(X).$ 
Thus $K_{n+1}/K_{n,+}$ is an unramified Galois extension.
For $K_{n,+}/K_{n},$ take $1/b$ to be a root of $\eta(X)$ so that $b_{0}=-1$ in $\Eta_{n}(X).$ 
Since $p\nmid -v_{K_{n}}(c_{n})$ and  $-v_{K_{n}}(c_{n})=E\frac{-v_{f}(\bm{j})}{q-1}>0$ with $v_{K_{n}}=Eq^{n}v_{f},$ we are able to apply Proposition~\ref{p221} to $\Eta_{n}(X).$ 
We know that $K_{n,+}/K_{n}$ is a degree $q$ totally ramified Galois extension. 
Hence (i) follows.

We also obtain that the ramification break of $K_{n,+}/K_{n}$ (thus of $K_{n+1}/K_{n}$) is $r = -v_{K_{n}}(c_{n})=E\frac{-v_{f}(\bm{j})}{q-1}.$ 
We have the $\psi$-function 
\[\psi_{K_{n+1}/K_{n}}(y)=\psi_{K_{n,+}/K_{n}}(y)=\begin{cases}
y & -1\leq y\leq r;\\
qy-(q-1)r & r\leq y,
\end{cases}\]
and (ii) of (2) follows. 
\end{proof}

Combining with Lemma~\ref{l221}, we can show by induction that
\begin{lemma}\label{l2222}
Assume $q \neq 2,$ $v_{f}\nmid t,$ $v_{f}(\bm{j})<0,$ and $p\nmid v_{f}(\bm{j}).$
For any $n\geq 1,$ we have
\begin{align}\psi_{K_{n}/K}(y)=\begin{cases}
y & -1\leq y\leq 0;\\
Ey  & 0\leq y\leq \dfrac{r}{E};\\
Eq^{n}y-(q^{n}-1)r & \dfrac{r}{E}\leq y. 
\end{cases}\nonumber\end{align}
\end{lemma}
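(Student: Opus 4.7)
The plan is to prove the formula by induction on $n$, using Lemma~\ref{l221} to write $\psi_{K_{n+1}/K} = \psi_{K_{n+1}/K_n} \circ \psi_{K_n/K}$. All the hard work has already been done in the preparatory discussion: we know (i) the $\psi$-function of $K_1/K$, (ii) that each step $K_{n+1}/K_n$ is a degree $q$ totally ramified Galois extension with ramification break $r = E\frac{-v_f(\bm{j})}{q-1}$, independent of $n$, and (iii) the resulting two-piece $\psi$-function for each such step. The content of this lemma is merely the piecewise composition.

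\textbf{Base case.} For $n=1$, the formula for $\psi_{K_1/K}$ was written out before the lemma with break at $r_1/E$, where $r_1 = e_1 v_{K''}(\omega)$. Since $v_{K''}(\omega) = e' e_0 \cdot \frac{-v_f(\bm{j})}{q-1}$ and $E = e_1 e' e_0$, we get $r_1 = r$, so the formula for $n=1$ agrees with the claim. (This is the content of the parenthetical remark ``$r_{n+1}$ are the same for all $n\geq 0$''.)

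\textbf{Inductive step.} Assuming the formula for $\psi_{K_n/K}$, I compose piecewise with
\[
\psi_{K_{n+1}/K_n}(z) = \begin{cases} z & -1 \leq z \leq r; \\ qz - (q-1)r & r \leq z. \end{cases}
\]
On $[-1, 0]$ and on $[0, r/E]$ we have $\psi_{K_n/K}(y) \in [-1, r]$, so $\psi_{K_{n+1}/K_n}$ acts as the identity and the outputs $y$ and $Ey$ are preserved. On $[r/E, +\infty)$ we check $\psi_{K_n/K}(y) = Eq^n y - (q^n - 1)r \geq r$, and a short calculation gives
\[
q(Eq^n y - (q^n-1)r) - (q-1)r = Eq^{n+1}y - (q^{n+1}-1)r,
\]
matching the claim for $n+1$.

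The only substantive step is verifying that the breakpoints $r/E$ (downstairs) and $r$ (upstairs in $K_n$) line up across the induction, i.e.\ that $\psi_{K_n/K}(r/E) = r$; this holds for every $n \geq 1$ precisely because the ramification break of $K_{n+1}/K_n$ is the same constant $r$ for all $n$. This is the key simplification over the infinite-prime case (Lemma~\ref{l2215}), where the breaks $r_l$ vary with $l$ and produce $n$ different linear pieces rather than just two; here no such obstacle arises and the proof is a two-line induction.
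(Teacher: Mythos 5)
Your proposal is correct and is exactly the argument the paper intends: the paper gives no written proof beyond ``It is straightforward to obtain,'' and the straightforward route is precisely your induction composing $\psi_{K_{n+1}/K}=\psi_{K_{n+1}/K_{n}}\circ\psi_{K_{n}/K}$ via Lemma~\ref{l221}, with the breakpoints matching because $r_{1}=e_{1}v_{K''}(\omega)=E\frac{-v_{f}(\bm{j})}{q-1}=r$ and $\psi_{K_{n}/K}(r/E)=r$. The only (harmless) slip is calling $K_{n+1}/K_{n}$ a degree $q$ totally ramified extension: in the paper only $K_{n,+}/K_{n}$ is totally ramified of degree $q$, while $K_{n+1}/K_{n,+}$ is unramified, but this does not affect your computation since $\psi_{K_{n+1}/K_{n}}=\psi_{K_{n,+}/K_{n}}$ is all that enters.
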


We are to consider the higher ramification subgroups of $G(K_{n}/K).$
Let us assume the conditions in Lemma~\ref{l2222} from now on.

Similarly to the infinite prime case, we have isomorphisms \[G(K_{1}/K)_{r}\cong G(K_{1}'/K)_{\frac{r}{e_{1}}}=G(K_{1}'/K'')_{\frac{r}{e_{1}}}\cong (\Bbb{Z}/p\Bbb{Z})^{s}.\]
The wild ramification subgroup $G(K_{1}/K)_{1}$ is equal to $G(K_{1}/K)_{r}.$ 
For any element $\sigma\in G(K_{1}/K)_{r},$ there exists $u\in \Bbb{F}_{q}$ such that 
\[\sigma(\xi_{-1})=\xi_{-1}\text{ and }\sigma(\xi_{1})=\xi_{1}+u\cdot \xi_{-1}.\]
This defines an isomorphism $G(K_{1}/K)_{r}\to \Bbb{F}_{q}\cong (\Bbb{Z}/p\Bbb{Z})^{s}.$
For $n\geq 2,$ we have $G(K_{n}/K_{n-1})_{1}=G(K_{n}/K_{n-1})_{r}\cong (\Bbb{Z}/p\Bbb{Z})^{s}.$ Indeed, any $\sigma\in G(K_{n}/K_{n-1})_{1}$ is characterized by 
\begin{align}\sigma(\xi_{-n})=\xi_{-n}\text{ and }\sigma(\xi_{n})=\xi_{n}+u\cdot \xi_{-1}
\nonumber\end{align}
for some $u\in\Bbb{F}_{q}.$ The map $\sigma\mapsto u$ gives an isomorphism.

We are now ready to state the main theorem for finite primes.
\begin{theorem}\label{p2221}
Assume $q \neq 2,$ $v_{f}\nmid t,$ $v_{f}(\bm{j})<0,$ and $p\nmid v_{f}(\bm{j}).$
\begin{enumerate}[\rm{(}1)]
\item 
The wild ramification subgroup $G(K_{n}/K)_{1}=G(K_{n}/K)_{r}$ of $K_{n}/K$ is generated by $\sigma_{l,u}$ for $1\leq l \leq n$ and $u\in \Bbb{F}_{q},$ where $\sigma_{l,u}$ is characterized by  
\begin{align}\sigma_{l,u}(\xi_{-i})=\xi_{-i}\text{ for }1\leq i\leq n\text{ and }\sigma_{l,u}(\xi_{i})=\begin{cases}\xi_{i}&1\leq i\leq l-1;\\
\xi_{i}+u\cdot \xi_{-(i-l+1)}&i\geq l.
\end{cases}\nonumber\end{align}

\item 
$G(K_{n}/K)_{1}$ is isomorphic to $(\Bbb{Z}/p\Bbb{Z})^{sn}.$
\end{enumerate}
\end{theorem}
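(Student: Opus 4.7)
The plan is to mirror the induction on $n$ used for Theorem~\ref{p2211}, with adaptations for the finite-prime setting in which all the ramification breaks collapse to the single value $r = E(-v_f(\bm{j}))/(q-1)$. By Lemma~\ref{l2222} the wild ramification subgroup $G(K_n/K)_1 = G(K_n/K)_r$ has order $q^n$; since $r$ is the unique upper break, $G(K_n/K)_{r+1} = 1$ and so $G(K_n/K)_r \cong G(K_n/K)_r/G(K_n/K)_{r+1}$ embeds into the additive group of the residue field, making it elementary abelian of exponent $p$. Thus once (1) is established, (2) is immediate by counting.

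The base case $n = 1$ and the element $\sigma_{n,u} \in G(K_n/K_{n-1})_r \subset G(K_n/K)_r$ are already treated in the paragraphs preceding the statement. For $l < n$, the element of $G(K_{n-1}/K)_r$ supplied by the inductive hypothesis lifts to $G(K_n/K)_r$ along the surjection $G(K_n/K)^{r/E} \twoheadrightarrow G(K_{n-1}/K)^{r/E}$ of \cite[Chapter~IV, Proposition~14]{Se}; by \cite[Chapter~IV, Proposition~2]{Se}, the kernel in lower numbering is $G(K_n/K_{n-1})_r$, a group of order $q$.

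The heart of the argument is to choose this lift $\tilde\sigma \in G(K_n/K)_r$ so that its action on $\xi_{\pm n}$ matches the prescription. Applying $\phi_t$ and using the inductive action on $\phi[t^{n-1}]$ gives
\begin{equation*}
\tilde\sigma(\xi_{-n}) - \xi_{-n} \in \phi[t], \qquad \tilde\sigma(\xi_n) - \xi_n - u\xi_{-(n-l+1)} \in \phi[t].
\end{equation*}
The bound $v_{K_n}(\tilde\sigma(x) - x) \geq r + v_{K_n}(x)$ applied to $x = \xi_{\pm n}$, together with a short computation using $v_f(\bm{j}) = v_1(q+1) - v_2$, yields
\begin{equation*}
r + v_{K_n}(\xi_n) = v_{K_n}(\xi_{-1}), \qquad r + v_{K_n}(\xi_{-n}) = v_{K_n}(\xi_{-1}) + r.
\end{equation*}
By Proposition~\ref{p2121}~(1), $v_f(\xi_{-1}) > v_f(\xi_1)$ strictly, so any nonzero $\xi_1$-component in an element of $\phi[t]$ would drop its valuation to $v_{K_n}(\xi_1) < v_{K_n}(\xi_{-1})$; this confines both errors to $\mathbb{F}_q\xi_{-1}$, and the strict inequality for $\xi_{-n}$ forces $\tilde\sigma(\xi_{-n}) = \xi_{-n}$ outright. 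The remaining $\xi_{-1}$-term in $\tilde\sigma(\xi_n)$ is absorbed by correcting $\tilde\sigma$ with a suitable element of $G(K_n/K_{n-1})_r$, which, as shown before the statement, acts as $\xi_n \mapsto \xi_n + c\xi_{-1}$ and trivially on all other basis vectors.

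Once the $\sigma_{l,u}$'s are constructed, pairwise commutativity is a direct check on the basis, and the identity $\prod_l \sigma_{l,u_l}(\xi_n) = \xi_n + \sum_l u_l\xi_{-(n-l+1)}$ combined with the linear independence of $\{\xi_{-1}, \ldots, \xi_{-n}\} \subset \phi[t^n]$ shows that they are $\mathbb{F}_q$-linearly independent. They therefore span an $\mathbb{F}_q$-subspace of $G(K_n/K)_1$ of order $q^n = |G(K_n/K)_1|$, which must equal the whole group, proving both (1) and (2). The main obstacle relative to Theorem~\ref{p2211} is precisely this lifting step: in the infinite-prime case one had $R_n^l > r_n$ for every $l < n$, so $G(K_n/K_{n-1})_{R_n^l} = 1$ and the lift was forced, whereas here every $R_n^l$ collapses to $r = r_n$, the kernel $G(K_n/K_{n-1})_r$ is nontrivial of order $q$, and one must instead exploit the strict gap $v_f(\xi_{-1}) > v_f(\xi_1)$ (equivalent to $v_f(\bm{j}) < 0$) to pin down the $\phi[t]$-direction of the residual correction.
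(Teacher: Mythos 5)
Your proposal is correct and follows essentially the same route as the paper's proof: the same induction on $n$, the same key valuation identity $r+v_{K_{n}}(\xi_{n})=v_{K_{n}}(\xi_{-1})=\cdots=v_{K_{n}}(\xi_{-n})$ coming from Proposition~\ref{p2121}~(1), and the same ramification-theoretic inputs (Lemma~\ref{l2222}, the paragraphs preceding the theorem, and \cite[Chapter~IV, Propositions~2 and 14]{Se}). The only organizational differences are that you construct each $\sigma_{l,u}$ bottom-up by lifting from $G(K_{n-1}/K)_{r}$ and correcting with the order-$q$ kernel $G(K_{n}/K_{n-1})_{r}$ before concluding by counting, whereas the paper decomposes an arbitrary element of $G(K_{n}/K)_{r}$ as a product of the $\sigma_{l,u}$, and for (2) you invoke the embedding of $G_{r}/G_{r+1}$ into the additive group of the residue field rather than the explicit matrix representation used in Theorem~\ref{p2211}~(2).
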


\begin{proof}
(1) The case $n=1$ is known. 
Assume (1) for the case $n-1.$ 
Let $\sigma$ be an element in $G(K_{n}/K)_{r}.$ 
By the induction hypothesis, we know $\sigma(\xi_{-n})-\xi_{-n}\in \phi[t].$ 
Similar to the proof of Theorem~\ref{p2211}, if $\sigma(\xi_{-n})-\xi_{-n}\neq 0,$ there is a contradiction $r\leq v_{K_{n}}(\sigma(\xi_{-n})\xi_{-n}^{-1}-1)<r.$
Thus $\sigma$ fixes $\xi_{-1}, \ldots, \xi_{-n}.$

Since $\phi_{t^{i}}(\xi_{\pm n})=\xi_{\pm (n-i)}$ for $1\leq i\leq n-1,$ we know $K_{n}=K(\xi_{-n},\xi_{n}).$ 
Hence $\sigma\in G(K_{n}/K)_{r}$ is characterized by $\sigma(\xi_{n}).$
For $\sigma(\xi_{n}),$ similar to (\ref{f22113}), we have by Proposition~\ref{p2121}~(1),
\begin{align}
v_{K_{n}}(\sigma(\xi_{n})-\xi_{n})&= v_{K_{n}}(\sigma(\xi_{n})\xi_{n}^{-1}-1)+v_{K_{n}}(\xi_{n})\nonumber\\
&\geq r+v_{K_{n}}(\xi_{n})=v_{K_{n}}(\xi_{-1})=\cdots =v_{K_{n}}(\xi_{-n}).\nonumber
\end{align}
Thus $\sigma(\xi_{n})-\xi_{n}$ is an $\Bbb{F}_{q}$-linear combination of $\xi_{-1}, \ldots, \xi_{-n}.$ 
If we put $\sigma(\xi_{n})-\xi_{n}=\sum_{i=1}^{n}u_{-i}\cdot \xi_{-i}$ for $u_{-i}\in \Bbb{F}_{q},$ then $\sigma=\sigma_{1,u_{-1}}\cdots \sigma_{n,u_{-n}}.$ 
This shows (1) for $n.$

(2) can be shown in the same way as the proof of Theorem~\ref{p2211}~(2). 
\end{proof}

\subsection{Tamely ramified extensions by division points}\label{s23}
This subsection is concerned with the case $v_{\infty}(\bm{j})\in [v_{0}q,+\infty)$ for an infinite prime $v_{\infty}$ and the case $v_{f}(\bm{j})\in [0,+\infty)$ for a finite prime $v_{f}$ not dividing $t.$
We are to show that the extensions of local fields given by $t^{n}$-division points are at worst tamely ramified.
Let us follow the notation in Sections~\ref{s21}, \ref{s221}, and \ref{s222}.  
\begin{lemma}\label{l231}
Let $v_{\infty}$ denote an infinite prime of $F$ and its corresponding valuation such that $v_{\infty}(\bm{j})\in [v_{0}q,+\infty).$
Let $K$ denote the completion of $F$ at 
$v_{\infty}.$ 
\begin{enumerate}[\rm{(}1)]
\item
The extension $K_{1}/K$ is at worst tamely ramified. The ramification index of $K_{1}/K$ divides $q^{2}-1;$
\item $K_{n}=K_{1}.$
\end{enumerate}
\end{lemma}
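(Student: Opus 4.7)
For (2), I would argue by induction on $n\geq 1$ that $K_{n+1}=K_{n}.$ By Proposition~\ref{p2112}~(1), the Newton polygon of $\phi_{t}(X)-\xi_{\pm n}$ is $Q_{n}P_{0}P_{2}$ with exactly two segments, and the first segment $Q_{n}P_{0}$ has horizontal length~$1.$ As in the proof of Lemma~\ref{l2211}, the factorization via \cite[Chapter~II,\,Proposition~6.4]{Neu} produces a linear factor of $\phi_{t}(X)-\xi_{\pm n}$ over $K_{n},$ forcing $\xi_{\pm(n+1)}\in K_{n}.$ The remaining roots have the form $\xi_{\pm(n+1)}+\eta$ with $\eta\in \phi[t]\subset K_{1}\subset K_{n},$ so $K_{n+1}=K_{n},$ giving $K_{n}=K_{1}$ by induction.

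For (1), the plan is to decompose $K_{1}/K$ as a tower $K\subset L^{*}\subset K_{1}$ of tame extensions. Setting $Y=X^{q-1}$ in $\phi_{t}(X)/X=a_{2}X^{q^{2}-1}+a_{1}X^{q-1}+t$ produces
\[h(Y)\coloneqq a_{2}Y^{q+1}+a_{1}Y+t\in K[Y],\]
whose $q+1$ roots are $Y_{[\xi]}\coloneqq \xi^{q-1}$ indexed by cosets $[\xi]\in (\phi[t]\setminus\{0\})/\Bbb{F}_{q}^{\times};$ under $v_{\infty}(\bm{j})\geq v_{0}q,$ the Newton polygon of $h(Y)$ is a single segment of slope $\frac{v_{2}-v_{0}}{q+1}.$ Let $L^{*}$ denote the splitting field of $h(Y)$ over $K.$ To control $e(L^{*}/K),$ I would adjoin $c$ with $c^{q+1}=t/a_{2},$ giving a tame extension $K(c)/K$ with $e(K(c)/K)\mid q+1$ (using $\mu_{q+1}\subset \Bbb{F}_{q^{2}}$ in an unramified extension and $p\nmid q+1$). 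The substitution $Y=cZ$ converts $h(Y)=0$ into $\widetilde{h}(Z)\coloneqq Z^{q+1}+(a_{1}c/t)Z+1=0$ with $v_{\infty}(a_{1}c/t)\geq 0.$ The reduction of $\widetilde{h}(Z)$ modulo the maximal ideal of $K(c)$ is a separable polynomial of degree $q+1$ (using $p\nmid q+1$), so Hensel's lemma places all its roots, hence all roots of $h(Y),$ inside $K(c)^{\text{unr}},$ giving $L^{*}\subset K(c)^{\text{unr}}$ and $e(L^{*}/K)\mid q+1.$

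For the upper step $K_{1}/L^{*},$ fix the basis $\{\xi_{-1},\xi_{1}\}$ of $\phi[t]$ and put $\xi\coloneqq \xi_{-1}+\xi_{1}.$ The Frobenius identity $\xi^{q}=(\xi_{-1}+\xi_{1})^{q}=\xi_{-1}^{q}+\xi_{1}^{q}=Y_{-1}\xi_{-1}+Y_{1}\xi_{1}$ combined with $\xi^{q}=\xi^{q-1}\xi$ yields
\[\frac{\xi_{1}}{\xi_{-1}}=\frac{\xi^{q-1}-Y_{-1}}{Y_{1}-\xi^{q-1}}\in L^{*},\]
since $\xi^{q-1},Y_{-1},Y_{1}$ are all roots of $h(Y)$ and the denominator is nonzero because $[\xi]\neq[\xi_{1}]$ (otherwise $\xi_{-1}\in \Bbb{F}_{q}\xi_{1},$ contradicting that $\{\xi_{-1},\xi_{1}\}$ is a basis of $\phi[t]$). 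Thus $\xi_{1}\in L^{*}(\xi_{-1}),$ so $K_{1}=L^{*}(\xi_{-1}),$ a Kummer extension of degree dividing $q-1$ via $\xi_{-1}^{q-1}=Y_{-1}\in L^{*}.$ Together these steps yield $e(K_{1}/K)\mid (q-1)(q+1)=q^{2}-1$ and $K_{1}/K$ is tame.

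The main obstacle is the Hensel step bounding $e(L^{*}/K)$: in the equality case $v_{\infty}(\bm{j})=v_{0}q$ the linear term of $\widetilde{h}(Z)$ becomes a unit, and one must carefully verify separability of its reduction by checking that the greatest common divisor with $(q+1)Z^{q}+\overline{a_{1}c/t}$ is trivial in characteristic $p.$ Everything else is routine Newton polygon analysis or elementary algebra with the key identity $\xi_{1}/\xi_{-1}\in L^{*}.$
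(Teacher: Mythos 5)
Your proof of part (2) is essentially the paper's own: Proposition~\ref{p2112} gives the two-segment polygon $Q_{n}P_{0}P_{2}$ whose first segment has horizontal length one, the unique root on that segment is $\xi_{\pm(n+1)},$ and \cite[Chapter~II,\,Proposition~6.4]{Neu} places it in $K_{n},$ exactly as in Lemma~\ref{l2211}; together with the basis statement in Proposition~\ref{p2112}~(2) this yields $K_{n+1}=K_{n}.$

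For part (1) you take a genuinely different route, and it is correct. The paper base-changes once to an auxiliary extension $M/K$ of ramification index $q^{2}-1,$ rescales $\phi_{t}$ by $b$ with $v_{\infty}(b)=\frac{v_{2}-v_{0}}{q^{2}-1}$ so the rescaled polynomial is monic with integral coefficients and separable reduction, and applies \cite[Chapter~II,\,Proposition~3.2]{FV} to get $M_{1}/M$ unramified, whence $e_{K_{1}/K}\mid q^{2}-1.$ You instead split $K_{1}/K$ internally: $L^{*}$ is the splitting field of $h(Y)=a_{2}Y^{q+1}+a_{1}Y+t$ (the polynomial satisfied by the $\xi^{q-1}$), shown to lie in an unramified extension of the tame extension $K(c),$ $c^{q+1}=t/a_{2},$ so $e(L^{*}/K)\mid q+1;$ then $K_{1}=L^{*}(\xi_{-1})$ is Kummer of degree dividing $q-1$ via the identity $\xi_{1}/\xi_{-1}=(\xi^{q-1}-Y_{-1})/(Y_{1}-\xi^{q-1})\in L^{*}$ with $\xi=\xi_{-1}+\xi_{1},$ whose denominator is indeed nonzero since $\{\xi_{-1},\xi_{1}\}$ is a basis. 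This costs more work than the paper's one-step argument, but it buys finer structure: an explicit tower $K\subset L^{*}\subset K_{1}$ with a sub-level of index dividing $q+1$ and a Kummer top of degree dividing $q-1,$ which is the tame-case analogue of the towers in Lemmas~\ref{l2212} and \ref{l2214} and is close in spirit to the remark following Lemma~\ref{l232}. Finally, the ``main obstacle'' you flag is not an obstacle: writing $\bar{u}$ for the residue of $a_{1}c/t,$ the reduction is $f(Z)=Z^{q+1}+\bar{u}Z+1,$ and since $q+1\equiv 1\pmod{p}$ its derivative is $f'(Z)=Z^{q}+\bar{u},$ whose unique root $z_{0}$ satisfies $z_{0}^{q}=-\bar{u};$ then $f(z_{0})=z_{0}\cdot z_{0}^{q}+\bar{u}z_{0}+1=1\neq 0,$ so $\gcd(f,f')=1$ in all cases, including the boundary case $v_{\infty}(\bm{j})=v_{0}q$ (and the case $\bar{u}=0$). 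Hence the Hensel step goes through and your bound $e(K_{1}/K)\mid(q-1)(q+1)=q^{2}-1$ holds as claimed.
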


Under the assumption of this lemma, we have $K_{n}/K=K_{1}/K$ which is at worst tamely ramified.

\begin{proof}
For (1), the Newton polygon of $\phi_{t}(X)$ has one segment with slope $\frac{v_{2}-v_{0}}{q^{2}-1}.$ Let $M$ be an extension of $K$ with ramification index $e_{M/K}$ being $q^{2}-1.$ We can take $b\in M$ such that $v_{\infty}(b)=\frac{v_{2}-v_{0}}{q^{2}-1}.$ 
With $b'=b^{q^{2}}/a_{2},$ modify $\phi_{t}(X)$ to be
\[\Phi'(X)=X^{q^{2}}+b_{1}X^{q}+b_{0}X\coloneqq b'\phi_{t}(X/b).\]
The valuations $v_{\infty}(b_{0}),$ $v_{\infty}(b_{1}),$ $v_{\infty}(1)$ are respectively
\[0,\,\,\frac{v_{\infty}(\bm{j})-v_{0}q}{q+1}\geq 0,\,\,0.\]
Thus $\Phi'(X)$ is a monic polynomial whose reduction is separable.
We know the extension $M_{1}$ of $M$ generated by the roots of $\Phi'(X)$ is unramified by \cite[Chapter~II,\,Proposition~3.2]{FV}.
Since $K_{1}/K$ is a subextension of $M_{1}/K,$ the ramification index $e_{K_{1}/K}$ divides the ramification index $e_{M/K}=e_{M_{1}/K}=q^{2}-1.$  
So $K_{1}/K$ is tamely ramified.

As for (2), it suffices to show that $\xi_{-n-1}$ and $\xi_{n+1}$ belong to $K_{n}.$ 
This follows from Proposition~\ref{p2112}~(2) in the same way as Lemma~\ref{l2211} follows from Lemma~\ref{l2111}.
\end{proof}

Consider the finite prime case then.

\begin{lemma}\label{l232}
Let $v_{f}$ denote a finite prime of $F$ not dividing $t$ and its corresponding valuation such that $v_{f}(\bm{j})\in [0,+\infty).$
Let $K$ denote the completion of $F$ at 
$v_{f}.$ 
\begin{enumerate}[\rm{(}1)]
\item
The extension $K_{1}/K$ is at worst tamely ramified with the ramification index dividing $q^{2}-1;$
\item
Let $K_{n,-}$ \text{\rm{(}}resp.\ $K_{n,+}$\text{\rm{)}} denote the extension of $K_{n}$ generated by $\xi_{-n-1}$ \text{\rm{(}}resp.\ $\xi_{n+1}$\text{\rm{)}}.
Then the extensions $K_{n,-}/K_{n}$ and $K_{n,+}/K_{n}$ are unramified.
\end{enumerate}
\end{lemma}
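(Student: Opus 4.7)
The plan is to adapt the template of Lemma~\ref{l231}, noting that in the finite-prime setting the scaling element needed for part (2) already lives inside $K_n$. The hypothesis $v_f(\bm{j})\geq 0$ together with $v_0=0$ forces the Newton polygon of $\phi_t(X)$ to be the single segment $P_0P_2$ of slope $v_2/(q^2-1)$; and by Proposition~\ref{p2121}~(2), the Newton polygons of $\phi_t(X)-\xi_{\pm n}$ are likewise single segments of the same slope, and all nonzero $\pi$-division points share the valuation $-v_2/(q^2-1)$.

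For (1), I would pass to a tamely ramified extension $M/K$ with $e_{M/K}=q^2-1$ containing an element $b$ with $v_f(b)=v_2/(q^2-1)$. With $b'=b^{q^2}/a_2$, form
\[\Phi'(X)=b'\phi_t(X/b)=X^{q^2}+b_1X^q+b_0X\in \mathcal{O}_M[X].\]
A direct valuation computation yields $v_f(b_0)=0$ and $v_f(b_1)=v_f(\bm{j})/(q+1)\geq 0$, so $\Phi'$ is integral and monic; and because $q$ is a power of $p$, the formal derivative of $\Phi'$ reduces to the unit $b_0$, so $\bar\Phi'$ is separable of degree $q^2$. By \cite[Chapter~II,\,Proposition~3.2]{FV} the splitting field of $\Phi'$ over $M$ is unramified, and since this field contains $K_1M$, we conclude $e_{K_1/K}\mid e_{M/K}=q^2-1$.

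For (2), the crucial input is Proposition~\ref{p2121}~(2), which supplies $\xi_n,\xi_{-n}\in K_n$ with valuation exactly $-v_2/(q^2-1)$; this means the scaling element $b=\xi_n^{-1}\in K_n$ (respectively $\xi_{-n}^{-1}\in K_n$) is already available in $K_n$, so no auxiliary extension is needed. Setting
\[\Phi_n(X)\coloneqq (b^{q^2}/a_2)(\phi_t(X/b)-\xi_n)=X^{q^2}+b_1X^q+b_0X+c_n,\]
the same calculation as in (1) gives $v_f(b_0)=0$ and $v_f(b_1)\geq 0$, and now additionally $v_f(c_n)=v_f(\xi_n)+q^2v_f(b)-v_2=0$, the equality being exactly what the single-segment Newton polygon predicts. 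Hence $\Phi_n\in\mathcal{O}_{K_n}[X]$ is monic with separable reduction (same derivative argument), so \cite[Chapter~II,\,Proposition~3.2]{FV} gives that its splitting field over $K_n$ is unramified. Since $\phi_t$ is additive and $\phi[t]\subseteq K_1\subseteq K_n$, this splitting field coincides with $K_n(\xi_{n+1})=K_{n,+}$; the analogous argument with $b=\xi_{-n}^{-1}$ yields that $K_{n,-}/K_n$ is unramified.

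The only delicacy is ensuring that the constant term $c_n$ in $\Phi_n$ is a unit rather than vanishing in the reduction; this is secured by the exact identity $v_f(\xi_n)=-v_2/(q^2-1)$ furnished by Proposition~\ref{p2121}~(2), which in turn reflects the single-segment structure of the relevant Newton polygons. Once this is in place, the remaining verifications are routine Newton-polygon bookkeeping combined with the characteristic-$p$ vanishing of $qX^{q-1}$ and $q^2X^{q^2-1}$ in the formal derivative.
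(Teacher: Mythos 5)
Your proposal is correct and follows essentially the same route as the paper: part (1) repeats the argument of Lemma~\ref{l231}~(1) verbatim (tame auxiliary extension $M$, rescaling by $b$ with $v_{f}(b)=v_{2}/(q^{2}-1)$, separable reduction, \cite[Chapter~II,\,Proposition~3.2]{FV}), and part (2) uses Proposition~\ref{p2121}~(2) to find the scaling element inside $K_{n}$ (the paper takes it in $K_{1}\subseteq K_{n}$, you take $\xi_{\pm n}^{-1}$, which is the same idea) before applying the same unramifiedness criterion. The extra care about $c_{n}$ being a unit is harmless but not needed, since separability of the reduction already follows from the derivative being the unit $b_{0}$.
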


Under the assumption of this lemma, 
we see that the extension $K_{n}/K$ is a tamely ramified extension $K_{1}/K$ followed by an unramified extension $K_{n}/K_{1}.$

\begin{proof}
The proof of (1) is similar to that of Lemma~\ref{l231}~(1).

For the proof of (2),  we use Proposition~\ref{p2121}~(2) and follow the same method as in the proof of Lemma~\ref{l231}~(1). 
We can take $b\in K_{1}$ such that $v_{f}(b)=\frac{v_{2}}{q^{2}-1}.$ 
With $b'\coloneqq b^{q^{2}}/a_{2},$ modify $\phi_{t}(X)-\xi_{-n}$ and $\phi_{t}(X)-\xi_{n}$ respectively to be  $b'(\phi_{t}(X/b)-\xi_{-n})$ and $b'(\phi_{t}(X/b)-\xi_{n})$ so that the valuations of the coefficients of $1,$ $X,$ $X^{q},$ $X^{q^{2}}$ are respectively 
\[0,\,\,0,\,\,\frac{v_{f}(\bm{j})}{q+1},\,\,0.\]
Then (2) follows from \cite[Chapter~II,\,Proposition~3.2]{FV}.
\end{proof}

Let $v$ be an infinite prime $v_{\infty}$ satisfying the assumption of Lemma~\ref{l231} or a finite prime $v_{f}$ not lying above $t$ satisfying the assumption of Lemma~\ref{l232}.
Put $v_{0}=v(t).$
We finish this section by determining the $\psi$-function of $K_{1}/K$ in the case $v(\bm{j}) \in (v_{0}q,+\infty).$
Since $K_1 / K$ is tamely ramified, determining the $\psi$-function is equivalent to determining the ramification index.

If we follow the discussion in Sections~\ref{s221} and \ref{s222}, it seems natural to ask whether there is an analogue of Lemmas~\ref{l2212} and \ref{l2221} (1) for such $v$.
Namely, does $K_1$ contain the splitting field of some binomial whose terms come from $\phi_t(X)$?
This is answered affirmatively in the following lemma under the assumption that $K$ contains all $(q^{2} - 1)$-st roots of unity.

\begin{lemma}\label{l233}
Let $L$ be the extension of $K$ generated by the roots of $t X + a_2 X^{q^2}$.
Assume $v(\boldsymbol{j}) \in (v_0 q, + \infty)$ and that $K$ contains $\mathbb{F}_{q^2}$ so that the extension $L / K$ is Kummer.
Then we have $L = K_1$.
\end{lemma}

\begin{proof}
The proof is carried out by the strategy used in that of Lemma~\ref{l2212}.
We give an outline.
Let $x_{i}'$ for $i=1,\ldots,q^{2}-1$ denote the nonzero roots of $tX + a_{2}X^{q^{2}}.$ 
Then $|x_{i}' - x_{j}'| = q^{\frac{v_{2} - v_{0}}{q^{2}-1}}$ for $i \neq j.$
Let $\xi$ denote $\xi_{-1}$ or $\xi_{1}.$
Since 
\[a_{2}\xi\prod_{i=1}^{q^{2}-1}(\xi - x_{i}') = a_{2}\xi^{q^{2}} + t\xi = - a_{1}\xi^{q},\]
we have 
\[\left|\prod_{i=1}^{q^{2}-1}(\xi - x_{i}')\right| = \left|a_{2}^{-1}a_{1}\xi^{q-1}\right| = q^{\frac{-v(\bm{j})+v_{0}q}{q+1} + (v_{2} - v_{0})}.\]
As $v(\bm{j})>v_{0}q,$ there exist some indices $i_{-}$ and $i_{+}$ satisfying 
\begin{align}|\xi_{-1} - x_{i_{-}}'| & \leq q^{\frac{-v(\bm{j})+v_{0}q}{(q+1)(q^{2}-1)}+\frac{v_{2}-v_{0}}{q^{2}-1}} < q^{\frac{v_{2}-v_{0}}{q^{2}-1}} = |x_{i_{-}}'-x_{j}'| \text{ for }j \neq i_{-},\nonumber\\
|\xi_{1} - x_{i_{+}}'| & \leq q^{\frac{-v(\bm{j})+v_{0}q}{(q+1)(q^{2}-1)}+\frac{v_{2}-v_{0}}{q^{2}-1}} < q^{\frac{v_{2}-v_{0}}{q^{2}-1}} = |x_{i_{+}}'-x_{j}'| \text{ for }j \neq i_{+}.\nonumber
\end{align}
By Kranser's lemma, we have $L \subset K_{1}.$
Note that the conjugates of $\xi_{-1}$ and $\xi_{1}$ are of the form $u \cdot \xi_{-1} + u' \cdot \xi_{1}$ for $(u, u') \in \Bbb{F}_{q}^{2} \setminus \{(0,0)\}.$
As $|\xi_{-1} - x_{i_{-}}'|<|\xi_{-1} - (u \cdot \xi_{-1} + u' \cdot \xi_{1})|$ for $(u, u') \in \Bbb{F}_{q}^{2} \setminus \{(0,0),(1,0)\},$ and $|\xi_{1} - x_{i_{+}}'|<|\xi_{1} - (u \cdot \xi_{-1} + u' \cdot \xi_{1})|$ for $(u, u') \in \Bbb{F}_{q}^{2} \setminus \{(0,0),(0,1)\},$ Kranser's lemma implies that $K(\xi_{-1}) \subset L$ and $K(\xi_{1}) \subset L.$
Hence $K_{1} \subset L.$
\end{proof}

\begin{proposition}\label{p231}
Assume $v(\bm{j}) \in (v_{0}q,+\infty).$
The ramification index of $K_{1}/K$ is $\frac{q^{2}-1}{n}$ with $n = \gcd(v(t / a_2), q^{2}-1).$
\end{proposition}

\begin{proof}
Assume for the moment that $K$ contains $\mathbb{F}_{q^2}$ so that the extension $L / K = K_{1} / K$ in Lemma~\ref{l233} is Kummer.
By replacing $\phi$ with some isomorphic Drinfeld $\Bbb{F}_{q}[t]$-module $\phi'$ over $K$ with the leading coefficient of $\phi'_{t}(X)$ having sufficiently negative valuation, we may assume that $v(t / a_2) > 0$.
Note that $n$ is unchanged under replacing $\phi$ with $\phi'$.
There exists some $\alpha' \in K$ with $v(\alpha') = n$ such that $L = K(\sqrt[q^{2}-1]{\alpha'})$ and the subextension $K(\sqrt[n]{\alpha'})/K$ of $L/K$ is unramified~(see \cite[Section~2, Lemma~6]{Bir}).
Put $\alpha = \sqrt[n]{\alpha'}.$
As $v(\alpha) = 1,$ the extension $K(\sqrt[\frac{q^{2}-1}{n}]{\alpha})/K(\alpha)$ is totally ramified with degree $\frac{q^{2}-1}{n}.$
Therefore, the ramification index of $K_{1}/K$ equals $\frac{q^{2}-1}{n}.$

For general $K$, considering the compositum $K \Bbb{F}_{q^2}$ and the compositum $K_1 \Bbb{F}_{q^2}$ instead of $K$ and $K_{1}$ respectively, and using the fact that any residue field extension is unramified, we obtain the same result on the ramification index. 
\end{proof}

\section{Functional $1$-Szpiro conjecture}\label{s31}
In this section, let $F$ be a finite extension of $\Bbb{F}_{q}(t).$ 
To obtain a function field analogue of Szpiro's conjecture, we first define at a prime $v$ of $F$ the $1$\text{\it{-conductor}} of a certain rank $2$ Drinfeld $\Bbb{F}_{q}[t]$-module $\phi$ over $F$ in Section~\ref{s311}. 
This is defined by using the $\pi$-adic Tate module for any \text{\it{degree}} $1$ polynomial $\pi$ in $\Bbb{F}_{q}[t]$ prime to $v.$ 
Using the results in Section~\ref{s22}, we will see for each prime of $F,$ the 1-conductor is independent of the choice of $\pi.$ 
Also the $1$-conductor of $\phi$ at each prime of $F$ will be explicitly calculated. 

In Section~\ref{s312}, we claim a numerical relation between the $J$-heights and the $1$-conductors of certain rank $2$ Drinfeld $\Bbb{F}_{q}[t]$-modules, where the $J$-height is considered as a replacement of the Faltings-Parshin height in Conjecture~\ref{c11}.

\subsection{1-conductors for rank 2 Drinfeld modules}\label{s311}
Let $\phi$ be a rank 2 Drinfeld $\Bbb{F}_{q}[t]$-module over $F.$ 
We denote by $\pi$ an irreducible polynomial in $\Bbb{F}_{q}[t]$ which is the uniformizer of the prime (also denoted by $\pi$) of $\Bbb{F}_{q}[t].$
If $\text{deg}\,\pi=d,$ then the polynomial $\phi_{\pi}(X)$ is of the form \[\phi_{\pi}(X)=\pi X+ a_{1}^{\pi}X^{q}+\cdots+a_{2d}^{\pi}X^{q^{2d}}\]
with $a_{i}^{\pi}\in F$ for $i=1, \ldots, 2d.$ 
Then $\phi[\pi^{n}]$ is a $2n$-dimensional $\Bbb{F}_{q}[t]/(\pi)$-vector space. 
By the $\pi$-adic Tate module $T_{\pi}$ of the Drinfeld module $\phi,$ we mean the rank $2$ free $\Bbb{F}_{q}[t]_{\pi}$-module $\varprojlim_{n}\phi[\pi^{n}].$
Denote by $\phi[\pi^{\infty}]$ the union of all $\phi[\pi^{n}]$ and by $K_{\infty}$ the extension of $K$ generated by elements in $\phi[\pi^{\infty}],$ i.e., $K_{\infty}=\varinjlim_{n} K_{n}.$ 
We keep the notation in Sections~\ref{s21} and \ref{s22}. 
We define the $1$-conductor of $\phi$ in the following manner.

\begin{lemdef}\label{d311}
Let $\phi$ be a rank $2$ Drinfeld $\Bbb{F}_{q}[t]$-module over $F.$ 
Let $v$ be a prime of $F.$
Let $\pi$ be a prime of $\Bbb{F}_{q}[t]$ and let it also denote its uniformizer. 
Assume that the polynomial $\pi$ is of degree $1$ and is not divisible by $v.$
Let $K$ be the completion of $F$ at $v.$ Put $v_{0}\coloneqq v(\pi).$
Assume 
\begin{equation}
    \begin{cases}
    \begin{split}
    &\text{either }\big(p\nmid v(\bm{j})\text{ and }v(\bm{j})<v_{0}q\big),&\\
    &\quad\quad\quad\quad\text{or }v(\bm{j})\geq v_{0}q\text{ if }v\text{ is infinite};
    \end{split}\\
    \begin{split}&\text{either }\big(q \neq 2,\,\,p\nmid v(\bm{j}),\text{ and }v(\bm{j})<0\big),&\\
    &\quad\quad\quad\quad\text{or }v(\bm{j})\geq 0\text{ if }v\text{ is finite}.\end{split}
    \end{cases}\label{f411}
\end{equation}
Let $G_{v}$ denote the absolute decomposition group  $G(K^{\text{\rm{sep}}}/K)$ of $K.$ 
Put
\begin{align}\mathfrak{f}_{v}(\phi)\coloneqq \int_{0}^{+\infty}(2-\text{\rm{rank}}_{\,\Bbb{F}_{q}[t]_{\pi}} \,T_{\pi}^{G_{v}^{y}})dy,\label{f3112}\end{align}
where $G_{v}^{y}$ denotes the $y$-th upper ramification subgroup of $G_{v}.$ 
Then 
\begin{enumerate}[\rm{(}1)]
\item
the integral $\mathfrak{f}_{v}(\phi)$ is convergent\text{\rm{;}}
\item
the value $\mathfrak{f}_{v}(\phi)$ is independent of the choice of a degree $1$ polynomial $\pi.$ 
\end{enumerate}
Define the $1$\text{\rm{-conductor}} of $\phi$ at $v$ to be the integral $\mathfrak{f}_{v}(\phi).$
\end{lemdef}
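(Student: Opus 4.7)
The plan is to split according to the dichotomy in hypothesis~(\ref{f411}), compute the integrand $y \mapsto 2 - \mathrm{rank}_{\mathbb{F}_q[t]_\pi} T_\pi^{G_v^y}$ pointwise from the ramification data already assembled, and then observe that the resulting formula depends on $\pi$ only through $v_0 = v(\pi)$, which is determined by $v$ alone. Both convergence and independence will fall out of the explicit computation.

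I would first dispose of the tame branches (namely $v(\bm{j}) \geq v_0 q$ when $v$ is infinite and $v(\bm{j}) \geq 0$ when $v$ is finite). By Lemmas~\ref{l231} and~\ref{l232} every finite subextension $K_n/K$ is at worst tamely ramified, so the wild inertia of $G_v$ acts trivially on each $\phi[\pi^n]$ and hence on $T_\pi$. Then $T_\pi^{G_v^y} = T_\pi$ for every $y > 0$, the integrand is identically zero on $(0,+\infty)$, and $\mathfrak{f}_v(\phi) = 0$; convergence and independence of $\pi$ are both automatic.

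In the wild branches I would work in the basis $\xi_- := (\xi_{-n})_n$, $\xi_+ := (\xi_n)_n$ of $T_\pi$ as a free $\mathbb{F}_q[t]_\pi$-module, obtained from the compatible sequences provided by Propositions~\ref{p2111} and~\ref{p2121}(1). Main theorem tells us that each generator $\sigma_{l,u}$ of the wild inertia fixes every $\xi_{-i}$ and sends $\xi_n \mapsto \xi_n + u\cdot\xi_{-(n-l+1)}$ for $n \geq l$, which on $T_\pi$ translates to the matrix $\left(\begin{smallmatrix}1 & u\pi^{l-1}\\0 & 1\end{smallmatrix}\right)$. For $u \neq 0$ its fixed $\mathbb{F}_q[t]_\pi$-submodule is precisely the rank-$1$ summand $L := \mathbb{F}_q[t]_\pi\cdot\xi_-$, since $\mathbb{F}_q[t]_\pi$ is an integral domain, so any subgroup of the wild inertia containing any such $\sigma_{l,u}$ fixes exactly $L$. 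Reading the upper-numbering filtration off Lemmas~\ref{l2215} and~\ref{l2222}, the largest upper break of $G_v$ acting on $T_\pi$ is $y^* = \frac{-v(\bm{j})+v_0 q}{q-1}$ in the infinite case and $y^* = \frac{-v(\bm{j})}{q-1}$ in the finite case; above $y^*$ the filtration group $G_v^y$ acts trivially on $T_\pi$, while on $(0,y^*]$ it still contains $\sigma_{1,u}$ for some $u \neq 0$, so $T_\pi^{G_v^y} = L$. Hence the integrand is the indicator of $(0,y^*]$ and $\mathfrak{f}_v(\phi) = y^*$, finite (establishing (1)) and manifestly depending on $\pi$ only through $v_0$.

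For (2) it remains to observe that $v_0 = v(\pi)$ is the same for every admissible $\pi$: any monic degree-$1$ polynomial is $t - a$ with $a \in \mathbb{F}_q$; when $v$ is infinite $v(t-a) = v(t)$ since $v(a) \geq 0 > v(t)$, and when $v$ is finite with $v \nmid \pi$ we have $v(\pi) = 0$. The main technical hurdle is the third paragraph: the Main theorem describes the wild action level by level on $\phi[\pi^n]$ as an $\mathbb{F}_q$-vector space via lower-numbering data, and one must verify that these finite-level descriptions assemble compatibly along $\phi_\pi$ into the matrix action on $T_\pi$ claimed above, and furthermore that the resulting $\mathbb{F}_q[t]_\pi$-rank of $T_\pi^{G_v^y}$ — rather than merely the $\mathbb{F}_q$-dimension of each $\phi[\pi^n]^{G_v^y}$ — genuinely jumps by exactly one unit across $y^*$ and remains constant on the rest of $(0,+\infty)$.
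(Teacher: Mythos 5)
Your proposal is correct and follows essentially the same route as the paper: the tame branches are dispatched by Lemmas~\ref{l231} and \ref{l232}, the wild branches by reading the single jump of $\mathrm{rank}_{\,\Bbb{F}_{q}[t]_{\pi}}T_{\pi}^{G_{v}^{y}}$ (from $1$ to $2$ at $y^{*}=r_{1}/E$, resp.\ $r/E$) off Theorems~\ref{p2211}/\ref{p2221} together with Lemmas~\ref{l2215}/\ref{l2222}, and independence of $\pi$ from the fact that the resulting value involves only $v(\bm{j})$ and $v_{0}=v(\pi)$. The one point to state explicitly (as the paper does in its proof of (2)) is that the results of Sections~\ref{s21}--\ref{s22}, written for $\pi=t$, apply verbatim to any degree-$1$ polynomial $\pi$ prime to $v$ because they depend only on the valuations of the coefficients of $\phi_{\pi}(X)$; with that remark, your closing observation that $v_{0}$ is the same for every admissible $\pi$ completes (2).
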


\begin{proof}
We first prove (2).  
It suffices to consider the cases where $\pi=t$ and $t-u$ for $u\in \Bbb{F}_{q}.$
Let $v$ be an infinite or finite prime of $F$ not dividing $t$ and $t-u.$ Write \[\phi_{t}(X)=tX+a_{1}X^{q}+a_{2}X^{q^{2}},\,\,\phi_{t-u}(X)=(t-u)X+a_{1}X^{q}+a_{2}X^{q^{2}}.\] 
Let $v$ be a finite prime.  
For the results in Sections~\ref{s212} and \ref{s222}, if we replace $t$ by $t-u$ and $\phi[t^{n}]$ by $\phi[(t-u)^{n}]$ for any $u\in \Bbb{F}_{q},$ all results establish since all claims rely only on the valuations of coefficients of $\phi_{t}(X).$ 
For $v$ being an infinite prime, if we replace $t$ by $t-u$ and $\phi[t^{n}]$ by $\phi[(t-u)^{n}]$ for any $u\in \Bbb{F}_{q},$ then the results in Sections~\ref{s211} and \ref{s221} establish.
Consequently, the proof of (1) below works for any $\pi$ and can straightforwardly imply (2). 

Next, we show (1) for the case where $v$ is infinite and $v(\bm{j})<v_{0}q.$ 
Let $m$ be the integer such that $v(\bm{j})\in (v_{0}q^{m+1},v_{0}q^{m}).$ 
By Theorem~\ref{p2211}, we know that any $\sigma\in G(K_{\infty}/K)^{y}=G(K_{m}/K)^{y}$ for $y>0$ fixes all $\xi_{-i}$ for $i\geq 1.$ 
Thus any nontrivial element $\sigma\in G(K_{m}/K)^{y}$ nontrivially acts on $(\xi_{n})_{n\geq 1}\in T_{\pi}$ and fixes $(\xi_{-n})_{n\geq 1}\in T_{\pi}.$ 
This implies $\text{\rm{rank}}_{\,\Bbb{F}_{q}[t]_{\pi}} \,T_{\pi}^{G_{v}^{y}}=1$ if $0<y\leq \frac{r_{1}}{E}$ and $=2$ if $\frac{r_{1}}{E}<y.$ 
Hence 
\[\mathfrak{f}_{v}(\phi)=\int_{0}^{\frac{r_{1}}{E}}1dy=\frac{-v(\bm{j})+v_{0}q}{q-1}.\]

We show (1) for the case where $v$ is finite and $v(\bm{j})<0.$ 
The absolute ramification subgroups $G_{v}^{y}$ for $y>0$ acts on $T_{\pi}$ via  $G(K_{\infty}/K)^{y}=\varprojlim_{n} G(K_{n}/K)^{y}.$ 
For any $n\geq 1,$ applying Theorem~\ref{p2221}, we know that any $\sigma\in G(K_{n}/K)^{y}$ fixes all $\xi_{-i}$ for $i=1, \ldots, n.$ 
So each nontrivial element of $G(K_{\infty}/K)^{\frac{r}{E}}$ nontrivially acts on $(\xi_{n})_{n\geq 1}\in T_{\pi}$ and fixes $(\xi_{-n})_{n\geq 1}\in T_{\pi}.$ 
Thus $\text{\rm{rank}}_{\,\Bbb{F}_{q}[t]_{\pi}} \,T_{\pi}^{G_{v}^{y}}=1$ if $0<y\leq \frac{r}{E}$ and $=2$ if $\frac{r}{E}<y.$ 
We have
\[\mathfrak{f}_{v}(\phi)=\int_{0}^{\frac{r}{E}}1dy=\frac{-v(\bm{j})}{q-1}.\]
Thus $\mathfrak{f}_{v}(\phi)$ is convergent. 

For the case where $v$ is infinite and $v(\bm{j})\geq v_{0}q,$ and the case where $v$ is finite and $v(\bm{j})\geq 0,$ Lemmas~\ref{l231} and \ref{l232} show that the extensions $K(\phi[\pi^{n}])/K$ are at worst tamely ramified for all $n\geq 1.$
Thus the $1$-conductors at these primes vanish.
Therefore the definition of $\mathfrak{f}_{v}(\phi)$ is valid. 
\end{proof}

We have also shown the following corollary.
\begin{corollary}\label{c311}
With the assumptions in Lemma-Definition~\text{\rm{\ref{d311}}}, we have
\begin{align}\mathfrak{f}_{v}(\phi)=\begin{cases}
\begin{cases}
0 & v(\bm{j})\in [v_{0}q,+\infty);\\
\frac{-v(\bm{j})+v_{0}q}{q-1}& v(\bm{j})\in(-\infty,v_{0}q),\,\,p\nmid v(\bm{j}),
\end{cases}&v\text{ is infinite};\\
\begin{cases}
0&v(\bm{j})\in [0,+\infty);\\
\frac{-v(\bm{j})}{q-1}&v(\bm{j})\in (-\infty,0),\,\,v(\bm{j})\nmid p,\text{ and }q \neq 2,
\end{cases}&v\text{ is finite}.
\end{cases}\nonumber
\end{align}
\end{corollary}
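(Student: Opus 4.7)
The plan is to observe that this corollary is a direct byproduct of the proof of Lemma-Definition~\ref{d311}: the four explicit values of $\mathfrak{f}_v(\phi)$ were actually obtained as the computations establishing convergence, so I would present this as a case-by-case extraction rather than a new argument.

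First I would dispose of the two tame cases. If $v$ is infinite with $v(\bm{j})\geq v_0 q$, then Lemma~\ref{l231} says $K(\phi[\pi^n])/K$ is at worst tamely ramified for every $n$, so passing to the inverse limit one has $G_v^y$ acting trivially on $T_\pi$ for all $y>0$, whence $\operatorname{rank}_{\mathbb{F}_q[t]_\pi} T_\pi^{G_v^y}=2$ and the integrand in (\ref{f3112}) is identically zero. The same argument, with Lemma~\ref{l232} in place of Lemma~\ref{l231}, handles the finite case $v(\bm{j})\geq 0$.

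Next I would treat the wild cases, which are the substantive ones. Suppose $v$ is infinite with $p\nmid v(\bm{j})$ and $v(\bm{j})<v_0 q-q+1$; let $m$ be the integer with $v(\bm{j})\in(v_0 q^{m+1},v_0 q^m)$. By Theorem~\ref{p2211}, the wild ramification subgroup of $K_m/K$ is generated by the automorphisms $\sigma_{l,u}$, each of which fixes every $\xi_{-i}$ and sends $\xi_i$ to $\xi_i+u\cdot\xi_{-(i-l+1)}$. Since $K_\infty=K_m$ in the relevant range by Lemma~\ref{l2211}, every element of $G_v^y$ for $y>0$ factors through $G(K_m/K)^y$ and therefore fixes the sublattice $(\xi_{-n})_{n\geq 1}\subset T_\pi$ while acting non-trivially on $(\xi_n)_{n\geq 1}$ as long as $G_v^y\neq 1$. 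From the $\psi$-function of Lemma~\ref{l2215} the unique break of $G(K_m/K)^\bullet$ in the upper numbering is $r_1/E=\frac{-v(\bm{j})+v_0 q}{q-1}$, so
\[
\operatorname{rank}_{\mathbb{F}_q[t]_\pi} T_\pi^{G_v^y}=\begin{cases}1 & 0<y\leq \frac{-v(\bm{j})+v_0q}{q-1},\\ 2 & y>\frac{-v(\bm{j})+v_0q}{q-1},\end{cases}
\]
and integrating gives the stated value. The finite wild case is parallel: with $p\nmid v(\bm{j})$ and $v(\bm{j})<0$, Theorem~\ref{p2221} shows that the wild ramification subgroup of $K_n/K$ fixes every $\xi_{-i}$ for each $n$, and Lemma~\ref{l2222} locates the single upper break of $G(K_\infty/K)^\bullet$ at $r/E=\frac{-v(\bm{j})}{q-1}$, yielding $\mathfrak{f}_v(\phi)=\frac{-v(\bm{j})}{q-1}$.

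The only conceptual obstacle is the passage from the finite extensions $K_n/K$ to the inverse limit $K_\infty/K$ in the finite-prime wild case, since here $K_\infty/K$ is genuinely infinite (see Remark~\ref{r2221}). I would handle this by noting that $G_v^y$ acts on $T_\pi=\varprojlim_n\phi[\pi^n]$ via its image in $G(K_\infty/K)^y=\varprojlim_n G(K_n/K)^y$, that the fixed-point description of Theorem~\ref{p2221} is compatible across the tower, and that the common ramification break $r/E$ does not depend on $n$, so the rank function of $y$ is determined level-by-level with a uniform cutoff. All remaining verifications are routine and already implicit in the proof of Lemma-Definition~\ref{d311}.
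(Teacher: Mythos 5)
Your proposal is correct and follows essentially the same route as the paper: the paper itself derives this corollary inside the proof of Lemma-Definition~\ref{d311}, using Lemmas~\ref{l231} and \ref{l232} for the tame cases and Theorems~\ref{p2211}, \ref{p2221} together with the $\psi$-functions of Lemmas~\ref{l2215} and \ref{l2222} to see that $\operatorname{rank}_{\Bbb{F}_{q}[t]_{\pi}}T_{\pi}^{G_{v}^{y}}$ equals $1$ for $0<y\leq r_{1}/E$ (resp.\ $r/E$) and $2$ beyond, exactly as you do. One minor slip: in the infinite wild case $r_{1}/E$ is the \emph{largest} upper ramification break of $K_{m}/K$, not the unique one (there are breaks at $r_{l}/E$ for $l=1,\ldots,m$), but since only the point where $G_{v}^{y}$ becomes trivial matters for the integrand, this does not affect your computation.
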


\subsection{A relation between \text{\it{J}}-heights and 1-conductors}\label{s312}
Fix a rank $2$ Drinfeld $\Bbb{F}_{q}[t]$-module $\phi$ defined over $F.$
In this subsection, we would like to consider a relation between a global invariant defined by $1$-conductors at all primes with some height of $\phi.$ 
By Corollary~\ref{c311}, the values of the $1$-conductors involve the valuations of the $j$-invariant and the $\pi.$
Thus the $J$-height defined in \cite{BPR} seems to be a good candidate for such a height. Let us recall its definition.

Let $M_{F}^{f}$ (resp.\ $M_{F}^{\infty}$) denote the set of all finite primes (resp.\ all infinite primes) of $F.$
Put $M_{F}\coloneqq M_{F}^{f}\cup M_{F}^{\infty}.$
For a prime $v$ of $F$ and the completion $F_{v},$ let $\text{deg}(v)$ denote the degree of the residue field of $F_{v}$ over $\Bbb{F}_{q}.$
Let $\bm{j}$ denote the $j$-invariant of $\phi.$ 
We adopt the definition of $J$-height of $\phi$ in our notation:
\begin{align}h_{J}(\phi)\coloneqq \frac{1}{[F:\Bbb{F}_{q}(t)]}\sum_{v\in M_{F}}\text{\rm{deg}}(v)\cdot\text{\rm{max}}\{-v(\bm{j}),\,0\}.\label{f412}
\end{align}
This invariant is independent of the choice of $F$ and Drinfeld modules in the isomorphism class. 

We follow the notation in Section~\ref{s311}.
Define the \text{\it{global}} $1$\text{\it{-conductor}} of the Drinfeld module $\phi$ by
\begin{align}\mathfrak{f}(\phi)\coloneqq \sum_{v\in M_{F}}\text{deg}(v)\cdot  \mathfrak{f}_{v}(\phi).\nonumber
\end{align}

The following claim is a consequence of Corollary~\ref{c311} and the definition of $J$-height.
\begin{theorem}[Functional $1$-Szpiro theorem]\label{p311}
Let $\phi$ be a rank $2$ Drinfeld $\Bbb{F}_{q}[t]$-module defined over $F$ such that its $j$-invariant $\bm{j}$ satisfies the condition \text{\rm{(\ref{f411})}} for any prime $v$ of $F.$ 
Then 
\begin{align}h_{J}(\phi)\leq \mathfrak{f}(\phi)\cdot\frac{q-1}{[F:\Bbb{F}_{q}(t)]}+q.\label{f336}\end{align}
\end{theorem}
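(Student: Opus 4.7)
The plan is to establish the inequality prime-by-prime, using Corollary~\ref{c311} to compare $\max\{-v(\bm{j}),0\}$ with $\mathfrak{f}_v(\phi)$ at each place, and then sum the local contributions globally.

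\textbf{Step 1 (Local inequality).} For each prime $v$ of $F$, choose any degree $1$ polynomial $\pi = \pi_v$ with $v \nmid \pi_v$, and write $v_0 = v(\pi_v)$. I claim
\[
  \max\{-v(\bm{j}),\,0\} \;\le\; (q-1)\,\mathfrak{f}_v(\phi) \;+\; q\cdot \max\{-v(\pi_v),\,0\}.
\]
This is verified by cases using Corollary~\ref{c311}. For a finite $v$, we have $v(\pi_v)=0$, so the last term vanishes; the inequality becomes equality if $v(\bm{j})<0$ (both sides equal $-v(\bm{j})$) and is trivial if $v(\bm{j})\ge 0$. For an infinite $v$, the ramification of $v$ over the infinite place of $\mathbb{F}_q(t)$ forces $v(\pi_v)=-e_v<0$ (independent of the degree $1$ choice of $\pi_v$), hence $\max\{-v(\pi_v),0\}=e_v$. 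When $v(\bm{j})<v_0q-q+1$, the relation $(q-1)\mathfrak{f}_v(\phi)=-v(\bm{j})+v_0q=-v(\bm{j})-qe_v$ gives equality. When $v(\bm{j})\ge v_0 q$, we have $\mathfrak{f}_v(\phi)=0$ and $-v(\bm{j})\le -v_0q=qe_v$, giving the inequality.

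\textbf{Step 2 (Global sum).} Multiply the local inequality by $\deg(v)$ and sum over $v\in M_F$. The left side is $[F:\mathbb{F}_q(t)]\cdot h_J(\phi)$ by the definition~(\ref{f412}). The first term on the right is $(q-1)\,\mathfrak{f}(\phi)$. The remaining term only contributes from infinite primes:
\[
 q \sum_{v\in M_F^{\infty}} \deg(v)\cdot(-v(\pi_v)) \;=\; q \sum_{v\mid v_\infty} f_v e_v,
\]
where $e_v,f_v$ are the ramification index and residue degree of $v$ over the infinite place $v_\infty$ of $\mathbb{F}_q(t)$. The fundamental identity $\sum_{v\mid v_\infty} e_v f_v = [F:\mathbb{F}_q(t)]$ collapses this to $q\cdot [F:\mathbb{F}_q(t)]$.

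\textbf{Step 3 (Conclude).} Dividing through by $[F:\mathbb{F}_q(t)]$ yields
\[
 h_J(\phi) \;\le\; \mathfrak{f}(\phi)\cdot\frac{q-1}{[F:\mathbb{F}_q(t)]} \;+\; q,
\]
which is~(\ref{f336}).

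There is no real obstacle here: once Corollary~\ref{c311} is in hand, the proof is a case-check plus one application of the fundamental identity for places above infinity. The only point demanding care is observing that in the definition of $\mathfrak{f}(\phi)$ we are free to choose $\pi_v$ locally at each prime (legitimate by part~(2) of Lemma-Definition~\ref{d311}), which is what makes the sharp comparison $-v(\bm{j})=(q-1)\mathfrak{f}_v(\phi)-qv(\pi_v)$ available simultaneously for finite and infinite places.
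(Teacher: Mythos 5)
Your proof is correct and follows essentially the same route as the paper: both arguments rest on Corollary~\ref{c311} to evaluate $\mathfrak{f}_v(\phi)$ place by place (with $v(\pi_v)=-e_v$ at infinite primes) and then invoke the fundamental identity $\sum_{v\mid v_\infty}e_vf_v=[F:\Bbb{F}_q(t)]$; you merely package the comparison as a local inequality summed over primes, whereas the paper bounds the global difference $\frac{q-1}{[F:\Bbb{F}_q(t)]}\mathfrak{f}(\phi)-h_J(\phi)$ from below by $-q$ in one step.
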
 
Under the conditions in the theorem, the equality in (\ref{f336}) holds if and only if there is no infinite prime $v$ of $F$ such that $v(\bm{j})\geq v_{0}q.$ 
\begin{proof}

We know from Corollary~\ref{c311} that 
\begin{align}\mathfrak{f}(\phi)=&\left(\sum_{v\in M_{F}^{f}}\text{deg}(v)\cdot \text{max}\left\{\frac{-v(\bm{j})}{q-1},\,0\right\}\right)\nonumber\\
&+\left(\sum_{v\in M_{F}^{\infty}}\text{deg}(v)\cdot \text{max}\left\{\frac{-v(\bm{j})+v_{0}q}{q-1},\,0\right\}\right).\nonumber\end{align}
We have
\begin{align}&\frac{q-1}{[F:\Bbb{F}_{q}(t)]}\mathfrak{f}(\phi)-h_{J}(\phi)\nonumber\\
=&\frac{1}{[F:\Bbb{F}_{q}(t)]}\sum_{v\in M_{F}^{\infty}}\text{deg}(v)\left(\text{max}\left\{-v(\bm{j})+v_{0}q,0\right\}-\text{max}\left\{-v(\bm{j}),0\right\}\right)\nonumber\\
\geq & \frac{1}{[F:\Bbb{F}_{q}(t)]}\sum_{v\in M_{F}^{\infty}}\text{\rm{deg}}(v)\cdot v_{0}q=\frac{1}{[F:\Bbb{F}_{q}(t)]}\sum_{v\in M_{F}^{\infty}}f_{v}\cdot e_{v}\cdot  (-\text{deg}(\pi)q)=-q,\nonumber
\end{align}
where we use the extension formula in the last equality. This shows the theorem.
\end{proof}

\begin{remark}\label{r312}
Although the conditions in the theorem seem strict, it is still not hard to find infinitely many rank $2$ Drinfeld modules fulfilling these conditions. 
Consider for each $i\geq 2$ with $p\nmid i$ the Drinfeld $\Bbb{F}_{q}[t]$-module $\phi^{(i)}$ over $\Bbb{F}_{q}(t)$ defined by $\phi_{t}^{(i)}(X)\coloneqq tX+t^{i}X^{q}+X^{q^{2}}.$ It has good reduction at all finite primes. The $j$-invariant of $\phi^{(i)}$ is $\bm{j}(\phi^{(i)})=t^{i(q+1)}.$ 
Thus at the only infinite prime $v,$ the $1$-conductor $\mathfrak{f}_{v}(\phi^{(i)})=\frac{i(q+1)-q}{q-1}$ of $\phi^{(i)}$ can be obtained.
This family is an example that the conductor can be arbitrarily large.
Note that the equality in (\ref{f336}) in the theorem holds.
\end{remark}

\appendix
\section{At primes lying above $t$}
\label{s213}
We are to calculate the valuations of the $t^{n}$-division points of a rank $2$ Drinfeld $\Bbb{F}_{q}[t]$-module $\phi$ over a local field corresponding to a prime of $F$ lying above $t.$ 
The method is similar to that in Section~\ref{s21}. 
There is no application of this appendix in this paper.

Let $v_{t}$ denote a prime of $F$ dividing $t$ and let it also denote the normalized valuation corresponding to $v_{t}.$
We have $v_{0}\coloneqq v_{t}(t)>0.$ Put $v_{1}\coloneqq v_{t}(a_{1})$ and $v_{2}\coloneqq v_{t}(a_{2})$ with $a_{1},$ $a_{2}$ in $\phi_{t}(X)=tX+a_{1}X^{q}+a_{2}X^{q^{2}}.$ 
Put $P_{0}=(1,v_{0}),$ $P_{1}=(q,v_{1}),$ and $P_{2}=(q^{2},v_{2}).$
We calculate 
\begin{align}\mu(P_{0},P_{1})-\mu(P_{0},P_{2})=\frac{v_{t}(\bm{j})-v_{0}q}{q^{2}-1}.
\nonumber
\end{align}
Assume $v_{t}(\bm{j})<v_{0}q.$ 
The Newton polygon of $\phi_{t}(X)$ is $P_{0}P_{1}P_{2}$ having exactly two segments. 
Let $\xi_{-1}$ and $\xi_{1}$ be two roots of $\phi_{t}(X)$ with valuations 
\[v_{t}(\xi_{-1})=-\mu(P_{0},P_{1})=-\frac{v_{1}-v_{0}}{q-1},\,\,v_{t}(\xi_{1})=-\mu(P_{1},P_{2})=-\frac{v_{2}-v_{1}}{(q-1)q}.\]
In general, for $n\geq 1,$ denote by $\xi_{-(n+1)}$ (resp.\ $\xi_{n+1}$) a root of $\phi_{t}(X)=\xi_{-n}$ (resp.\ $\phi_{t}(X)=\xi_{n}$) such that $v_{t}(\xi_{-(n+1)})$ (resp.\ $v_{t}(\xi_{n+1})$) is the largest among the valuations of all roots of $\phi_{t}(X)=\xi_{-n}$ (resp.\ $\phi_{t}(X)=\xi_{n}$).
The next lemma concerns the valuations of $\xi_{-n}$ and $\xi_{n}.$

\begin{lemma}
\begin{enumerate}[\rm{(}1)]
\item
Assume $0<v_{t}(\bm{j})<v_{0}q.$ 
Let $m$ be the integer such that $v_{t}(\bm{j})\in [v_{0}/q^{m-1},v_{0}/q^{m-2}).$
Put $Q_{-n}=(0,v_{t}(\xi_{-n}))$ and $Q_{n}=(0,v_{t}(\xi_{n})).$
\begin{enumerate}[\rm{(}i)]
\item We have
\[v_{t}(\xi_{-n})=\begin{cases}
-\dfrac{-v_{0}+v_{1}q^{n-1}}{(q-1)q^{n-1}} & 1\leq n\leq m;\\
-\dfrac{-v_{0}+v_{1}q^{m-1}+v_{2}q^{m-1}(q^{2n-2m}-1)/(q+1)}{(q-1)q^{2n-m-1}} & n\geq m+1.
\end{cases}\]
If $n\leq m-1,$ then the Newton polygon of  $\phi_{t}(X)-\xi_{-n}$ is $Q_{-n}P_{1}P_{2}$ having exactly two segments. 
If $n\geq m,$ then the Newton polygon of $\phi_{t}(X)-\xi_{-n}$ is $Q_{-n}P_{2}$ having exactly one segment. 

\item For $n\geq 1,$ we have 
\[v_{t}(\xi_{n})=-\frac{-v_{1}+
v_{2}(q^{2n-1}+1)/(q+1)}{(q-1)q^{2n-1}}.\]
The Newton polygon of 
$\phi_{t}(X)-\xi_{n}$ is $Q_{n}P_{2}$ having exactly one segment. 
\end{enumerate}

\item Assume $v_{t}(\bm{j})\leq 0.$
We have
\begin{align}v_{t}(\xi_{-n})&=-\frac{-v_{0}+v_{1}q^{n-1}}{(q-1)q^{n-1}},\nonumber\\v_{t}(\xi_{n})&=-\frac{v_{2}+v_{1}(q^{n}-q-1)}{(q-1)q^{n}}.\nonumber\end{align}
If $v_{t}(\bm{j})<0,$ then the Newton polygon of $\phi_{t}(X)-\xi_{-n}$ \text{\rm{(}}resp.\ $\phi_{t}(X)=\xi_{n}$\text{\rm{)}} is $Q_{-n}P_{1}P_{2}$ \text{\rm{(}}resp.\ $Q_{n}P_{1}P_{2}$\text{\rm{)}} having exactly two segments. 
If $v_{t}(\bm{j})=0,$ then the Newton polygon of $\phi_{t}(X)-\xi_{-n}$ \text{\rm{(}}resp.\ $\phi_{t}(X)-\xi_{n}$\text{\rm{)}} is $Q_{-n}P_{1}P_{2}$ \text{\rm{(}}resp.\ $Q_{n}P_{2}$\text{\rm{)}} having exactly two segments \text{\rm{(}}resp.\ one segment\text{\rm{)}}.
\end{enumerate}
\end{lemma}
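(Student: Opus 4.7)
The plan is to mirror the inductive arguments of Lemma~\ref{l2111} and Proposition~\ref{p2121}. Throughout, $\xi_{\pm(n+1)}$ is by definition the root of $\phi_{t}(X)-\xi_{\pm n}$ with the largest valuation, so its valuation equals the negative of the leftmost (steepest-negative) slope of the Newton polygon of $\phi_{t}(X)-\xi_{\pm n}$. The strategy is therefore to compute, at each inductive step, the three slopes $\mu(Q_{\pm n},P_{0})$, $\mu(Q_{\pm n},P_{1})$, $\mu(Q_{\pm n},P_{2})$, determine the shape of the Newton polygon of $\phi_{t}(X)-\xi_{\pm n}$, and read off the valuation from its initial segment. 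The key novelty compared with Section~\ref{s21} is simply that $v_{0}=v_{t}(t)>0$, which reverses several sign comparisons.

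For part (1), the base case is the Newton polygon of $\phi_{t}(X)$ itself. Under $0<v_{t}(\bm{j})<v_{0}q$, one verifies $\mu(P_{0},P_{1})<\mu(P_{0},P_{2})$, so the polygon is $P_{0}P_{1}P_{2}$ with two segments, giving the stated $v_{t}(\xi_{-1})$ and $v_{t}(\xi_{1})$. For the $\xi_{n}$-branch, I expect to prove by induction that the Newton polygon of $\phi_{t}(X)-\xi_{n}$ reduces to the single segment $Q_{n}P_{2}$: the inequality $\mu(Q_{n},P_{1})-\mu(Q_{n},P_{2})=(q-1)v_{t}(\bm{j})/q^{2n+1}>0$ (an easy rearrangement using $v_{t}(\bm{j})=v_{1}(q+1)-v_{2}$) together with $\mu(Q_{n},P_{0})>\mu(Q_{n},P_{2})$ (which follows from $v_{t}(\bm{j})<v_{0}q<v_{0}q^{2n+1}$) places $P_{0}$ and $P_{1}$ strictly above the segment $Q_{n}P_{2}$, whence $v_{t}(\xi_{n+1})=-\mu(Q_{n},P_{2})=-(v_{2}-v_{t}(\xi_{n}))/q^{2}$, and the closed form follows by solving the recursion.

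For the $\xi_{-n}$-branch in (1), the delicate point is the transition at $n=m$. A direct computation yields
\[
\mu(Q_{-n},P_{1})-\mu(P_{1},P_{2})=\frac{v_{t}(\bm{j})q^{n-1}-v_{0}}{(q-1)q^{n}},
\]
so $P_{1}$ remains a vertex of the Newton polygon exactly when $v_{t}(\bm{j})<v_{0}/q^{n-1}$. The bound $v_{t}(\bm{j})\in[v_{0}/q^{m-1},v_{0}/q^{m-2})$ therefore forces the polygon of $\phi_{t}(X)-\xi_{-n}$ to be $Q_{-n}P_{1}P_{2}$ for $1\leq n\leq m-1$ (giving the recursion $v_{t}(\xi_{-(n+1)})=-(v_{1}-v_{t}(\xi_{-n}))/q$) and $Q_{-n}P_{2}$ for $n\geq m$ (giving $v_{t}(\xi_{-(n+1)})=-(v_{2}-v_{t}(\xi_{-n}))/q^{2}$). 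Solving each linear recursion, with matching initial value $v_{t}(\xi_{-m})$ computed from the first regime, reproduces the piecewise formula; in the second regime the telescoping sum $\sum_{k=0}^{n-m-1}q^{-2k}$ produces the factor $(q^{2n-2m}-1)/(q^{2}-1)$ visible in the statement, which simplifies after multiplying numerator and denominator by $q-1$ to the stated form.

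Part (2) is a direct analogue. When $v_{t}(\bm{j})<0$ the sign of $\mu(Q_{-n},P_{1})-\mu(P_{1},P_{2})$ remains negative for all $n$, so $P_{1}$ never disappears and both Newton polygons are $Q_{\pm n}P_{1}P_{2}$ with two segments; the recursion $v_{t}(\xi_{\pm(n+1)})=-(v_{1}-v_{t}(\xi_{\pm n}))/q$ holds for all $n$, and solving it recovers the stated formulas. The boundary case $v_{t}(\bm{j})=0$ requires only noting that $\mu(Q_{n},P_{1})=\mu(Q_{n},P_{2})$, so $P_{1}$ lies on the polygon and the $\xi_{n}$-polygon becomes the single segment $Q_{n}P_{2}$ passing through $P_{1}$, while the $\xi_{-n}$-polygon keeps two segments by the same slope comparison as before.

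The main obstacle is purely bookkeeping: verifying that the closed-form expressions match across the regime change at $n=m$ in part (1), and that the two different recursions (one of first order in $v_{t}(\xi_{-n})$, one of ``geometric plus shift'' type) glue correctly at the boundary. Once the threshold inequality $v_{t}(\bm{j})\gtrless v_{0}/q^{n-1}$ is identified as the sole criterion governing the Newton-polygon shape, the rest is routine.
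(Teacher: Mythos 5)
Your proposal is correct and follows essentially the same route as the paper: induction on $n$, comparing the slopes $\mu(Q_{\pm n},P_{0})$, $\mu(Q_{\pm n},P_{1})$, $\mu(Q_{\pm n},P_{2})$ to pin down the Newton polygon of $\phi_{t}(X)-\xi_{\pm n}$, reading $v_{t}(\xi_{\pm(n+1)})$ off the initial segment, and solving the resulting linear recursions (with the regime change at $n=m$ handled exactly as you describe). Two minor caveats, neither affecting the argument: the exact slope difference in the $\xi_{n}$-branch of (1) is $\mu(Q_{n},P_{1})-\mu(Q_{n},P_{2})=v_{t}(\bm{j})(q^{2n}-1)/\bigl((q+1)q^{2n+1}\bigr)$ rather than $(q-1)v_{t}(\bm{j})/q^{2n+1}$ (same sign, which is all that matters), and for $n\geq m+1$ the threshold formula $\mu(Q_{-n},P_{1})-\mu(P_{1},P_{2})=\bigl(v_{t}(\bm{j})q^{n-1}-v_{0}\bigr)/\bigl((q-1)q^{n}\bigr)$ no longer applies verbatim since it was derived from the first-regime value of $v_{t}(\xi_{-n})$, so the slope comparison (including the check that $P_{0}$ stays above $Q_{-n}P_{2}$) must be redone with the second-regime valuation, as the paper does — a routine computation that confirms the one-segment shape.
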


\begin{proof}
We prove the lemma by induction on $n.$ 
We determine the Newton polygon of $\phi_{t}(X)-\xi_{1}$ first.
We have 
\begin{align}\mu(Q_{-1},P_{0})=v_{0}+\frac{-v_{0}+v_{1}}{q-1},\,\,\mu(Q_{-1},P_{1})=\frac{v_{1}+\frac{-v_{0}+v_{1}}{q-1}}{q-0},\,\,\mu(Q_{-1},P_{2})=\frac{v_{2}+\frac{-v_{0}+v_{1}}{q-1}}{q^{2}-0}.\label{fa5}\end{align}
We calculate 
\begin{align}
\mu(Q_{-1},P_{0})-\mu(Q_{-1},P_{1})&=\frac{v_{0}(q-1)}{q}>0,\label{fa6}\\
\mu(Q_{-1},P_{1})-\mu(Q_{-1},P_{2})&=\frac{v_{t}(\bm{j})-v_{0}}{q^{2}}.\label{fa7}
\end{align}
If $v_{t}(\bm{j})\in [v_{0},v_{0}q),$ then we have $\mu(Q_{-1},P_{1})\geq \mu(Q_{-1},P_{2})$ and the Newton polygon of $\phi_{t}(X)-\xi_{-1}$ is $Q_{-n}P_{2}$ having exactly one segment.
If $v_{t}(\bm{j})< v_{0},$ then we have $\mu(Q_{-1},P_{1})<\mu(Q_{-1},P_{2})$ and the Newton polygon is $Q_{-1}P_{1}P_{2}$ having exactly two segments. 

Assume that (i) for $n-1$ is valid.
If $n\leq m-1,$ the valuation of $\xi_{-n}$ is
\[-\mu(Q_{-(n-1)},P_{1})=-\frac{v_{1}-v_{t}(\xi_{-(n-1)})}{q-0}=-\frac{-v_{0}+v_{1}q^{n-1}}{(q-1)q^{n-1}}.\]
Next, we determine the Newton polygon of $\phi_{t}(X)-\xi_{-n}.$
We have 
\begin{align}\mu(Q_{-n},P_{0})=v_{0}-v_{t}(\xi_{-n}),\,\,\mu(Q_{-n},P_{1})=\frac{v_{1}-v_{t}(\xi_{-n})}{q-0},\,\,\mu(Q_{-n},P_{2})=\frac{v_{2}-v_{t}(\xi_{-n})}{q^{2}-0}.\label{fa4}\end{align}
We calculate 
\begin{align}
\mu(Q_{-n},P_{0})-\mu(Q_{-n},P_{1})&=\frac{v_{0}(q^{n}-1)}{q^{n}}>0,\label{fa2}\\
\mu(Q_{-n},P_{1})-\mu(Q_{-n},P_{2})&=\frac{v_{t}(\bm{j})q^{n-1}-v_{0}}{q^{n+1}}.\label{fa3}
\end{align}
Since $n\leq m-1,$ we have $\mu(Q_{-n},P_{1})<\mu(Q_{-n},P_{2}).$
This implies that $Q_{-n}P_{1}$ is the first segment of the Newton polygon and the Newton polygon is $Q_{-n}P_{1}P_{2}$ having exactly two segments.

When $n=m,$ we have the same inductive hypothesis as above and $v_{\infty}(\xi_{m})=-\mu(Q_{-(m-1)},P_{1}).$
However, we have $\mu(Q_{-m},P_{1})\geq \mu(Q_{-m},P_{2})$ by (\ref{fa3}).
By (\ref{fa2}), the Newton polygon of $\phi_{t}(X)-\xi_{-m}$ is $Q_{-m}P_{2}$ having exactly one segment.

If $n\geq m+1,$ then the valuation of $\xi_{-n}$ is calculated by $-\mu(Q_{-n},P_{2})=-(v_{2}-v_{t}(\xi_{-n}))/q^{2}.$
We show that the Newton polygon of $\phi_{t}(X)-\xi_{-n}$ is $Q_{-n}P_{2}$ having exactly one segment.
We have $\mu(Q_{-n},P_{0}),$ $\mu(Q_{-n},P_{1}),$ and $\mu(Q_{-n},P_{2})$ as in (\ref{fa4}).
The Newton polygon is determined by the inequalities
\begin{align}
\mu(Q_{-n},P_{0})-\mu(Q_{-n},P_{2})&=\frac{v_{t}(\bm{j})q^{m-1}+v_{0}(q^{2n-m+1}-q-1)}{q^{2n-m+1}}>0,\nonumber\\
\mu(Q_{-n},P_{1})-\mu(Q_{-n},P_{2})&=\frac{v_{t}(\bm{j})(q^{2n-m}+q^{m-1})/(q+1)-v_{0}}{q^{2n-m+1}}> \frac{v_{t}(\bm{j})q^{m-1}-v_{0}}{q^{2n-m+1}}\geq 0.\nonumber
\end{align}

For (ii) of (1), we first show that the Newton polygon of $\phi_{t}(X)-\xi_{1}$ is $Q_{1}P_{2}$ having exactly one segment.
We have 
\begin{align}\mu(Q_{1},P_{0})=v_{0}+\frac{-v_{1}+v_{2}}{(q-1)q},\,\,\mu(Q_{1},P_{1})=\frac{v_{1}+\frac{-v_{1}+v_{2}}{(q-1)q}}{q-0},\,\,\mu(Q_{1},P_{2})=\frac{v_{2}+\frac{-v_{1}+v_{2}}{(q-1)q}}{q^{2}-0}.\label{fa8}\end{align}
The Newton polygon is determined by the inequalities
\begin{align}
\mu(Q_{1},P_{0})-\mu(Q_{1},P_{2})&=\frac{-v_{t}(\bm{j})+v_{0}q^{3}}{q^{3}}>0,\label{fa9}\\
\mu(Q_{1},P_{1})-\mu(Q_{1},P_{2})&=\frac{v_{t}(\bm{j})(q-1)}{q^{3}}>0.\label{fa10}
\end{align}
Assume that (ii) for $n-1$ is valid.
The valuation of $\xi_{n}$ is calculated by $-\mu(Q_{n-1},P_{2})=-(v_{2}-v_{t}(\xi_{n-1}))/q^{2}.$
We have 
\begin{align}\mu(Q_{n},P_{0})=v_{0}-v_{t}(\xi_{n}),\,\,\mu(Q_{n},P_{1})=\frac{v_{1}-v_{t}(\xi_{n})}{q-0},\,\,\mu(Q_{n},P_{2})=\frac{v_{2}-v_{t}(\xi_{n})}{q^{2}-0}.\label{fa11}\end{align}
We calculate 
\begin{align}
\mu(Q_{n},P_{0})-\mu(Q_{n},P_{2})&=\frac{-v_{t}(\bm{j})+v_{0}q^{2n+1}}{q^{2n+1}}>0,\nonumber\\
\mu(Q_{n},P_{1})-\mu(Q_{n},P_{2})&=\frac{v_{t}(\bm{j})(q^{2n}-1)/(q+1)}{q^{2n+1}}>0.\nonumber
\end{align}
Thus the Newton polygon of $\phi_{t}(X)-\xi_{n}$ is $Q_{n}P_{2}$ having exactly one segment.

Next, we show the result in (2) for $\xi_{-n}.$
Assume $v_{t}(\bm{j})\leq 0.$
We determine the Newton polygon of $\phi_{t}(X)-\xi_{-1}.$
We have $\mu(Q_{-1},P_{0}),$ $\mu(Q_{-1},P_{1}),$ and $\mu(Q_{-1},P_{2})$ as in (\ref{fa5}).
Since $v_{t}(\bm{j})\leq 0,$ we have  $\mu(Q_{-1},P_{1})<\mu(Q_{-1},P_{2})$ by (\ref{fa7}).
Then (\ref{fa6}) implies that the Newton polygon of $\phi_{t}(X)-\xi_{-n}$ is $Q_{-1}P_{1}P_{2}$ having exactly two segments.
Assume that (2) for $n-1$ is valid. 
Then the valuation of $\xi_{-n}$ is calculated by $-\mu(Q_{-(n-1)},P_{1})=-(v_{1}-v_{t}(\xi_{-(n-1)}))/q.$
To determine the Newton polygon of $\phi_{t}(X)-\xi_{-n},$ we compare the slopes given in (\ref{fa4}).
Since $v_{t}(\bm{j})\leq 0,$ we have $\mu(Q_{-n},P_{1})<\mu(Q_{-n},P_{2})$ by (\ref{fa3}). 
Then (\ref{fa2}) implies that the Newton polygon is $Q_{-n}P_{1}P_{2}$ having exactly two segments.

Finally, we show the result in (2) for $\xi_{n}.$
To determine the Newton polygon of $\phi_{t}(X)-\xi_{1},$ we compare the slopes given in (\ref{fa8}).
If $v_{t}(\bm{j})<0,$ we have the inverse of the inequality (\ref{fa10}), i.e., $\mu(Q_{1},P_{1})<\mu(Q_{1},P_{2}).$ 
Then (\ref{fa9}) implies that the Newton polygon is $Q_{1}P_{1}P_{2}$ having exactly two segments.
If $v_{t}(\bm{j})=0,$ we have
$\mu(Q_{1},P_{1})=\mu(Q_{1},P_{2}).$
Then (\ref{fa9}) implies that the Newton polygon is $Q_{1}P_{2}$ having exactly one segment.
Assume that (2) for $n-1$ is valid.
Assume $v_{t}(\bm{j})<0.$ 
The valuation of $\xi_{n}$ is calculated by $-\mu(Q_{n-1},P_{1})=-(v_{1}-v_{t}(\xi_{n-1}))/q.$
We show that the Newton polygon of $\phi_{t}(X)-\xi_{n}$ is $Q_{n}P_{1}P_{2}$ having exactly two segments.
We have $\mu(Q_{n},P_{0}),$ $\mu(Q_{n},P_{1}),$ and $\mu(Q_{n},P_{2})$ as in (\ref{fa11}).
The Newton polygon is determined by the inequalities
\begin{align}
\mu(Q_{n},P_{0})-\mu(Q_{n},P_{1})&=\frac{-v_{t}(\bm{j})+v_{0}q^{n+1}}{q^{n+1}}>0,\nonumber\\
\mu(Q_{n},P_{1})-\mu(Q_{n},P_{2})&=\frac{v_{t}(\bm{j})(q^{n}-1)}{q^{n+2}}<0.\nonumber
\end{align}
If $v_{t}(\bm{j})=0,$ we have $v_{1}(q+1)=v_{2}$ and $v_{t}(\xi_{n-1})=-v_{1}/(q-1)$ for any $n.$ 
This implies that the valuation of $\xi_{n}$ is $-\mu(Q_{n-1},P_{1})=-\mu(Q_{n-1},P_{2})=-v_{1}/(q-1).$
As for the Newton polygon of $\phi_{t}(X)-\xi_{n}$, we have $\mu(Q_{n},P_{0}),$ $\mu(Q_{n},P_{1}),$ and $\mu(Q_{n},P_{2})$ as in (\ref{fa11}).
We calculate $\mu(Q_{n},P_{0})-\mu(Q_{n},P_{2})=v_{0}>0$ and $\mu(Q_{n},P_{1})=\mu(Q_{n},P_{2}).$
Therefore the Newton polygon is $Q_{n}P_{2}$ having exactly one segment.
\end{proof} 

The next result concerns the case $v_{t}(\bm{j})\geq v_{0}q.$ 
Now all nonzero roots of $\phi_{t}(X)$ have valuation
$-\frac{v_{2}-v_{0}}{q^{2}-1}.$
Let $\xi_{-1}$ and $\xi_{1}$ be elements of $\phi[t]$ such that they form a basis. 
Define $\xi_{-n}$ and $\xi_{n}$ inductively as in the paragraph before Proposition~\ref{p2112}.

\begin{lemma}
Assume $v_{t}(\bm{j})\in [v_{0}q,+\infty).$
We have
\[v_{t}(\xi_{-n})=v_{t}(\xi_{n})=-\frac{-v_{0}+v_{2}q^{2n-2}}{(q^{2}-1)q^{2n-2}}.\]
Put $Q_{n}=(0,v_{t}(\xi_{-n}))=(0,v_{t}(\xi_{n})).$
Then the Newton polygons of 
$\phi_{t}(X)-\xi_{-n}$ and $\phi_{t}(X)-\xi_{n}$ are $Q_{n}P_{2}$ having exactly one segment.
\end{lemma}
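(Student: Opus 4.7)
The plan is to mirror the inductive argument used in the preceding lemma of this appendix, with the crucial difference that the hypothesis $v_{t}(\bm{j})\geq v_{0}q$ forces every Newton polygon in sight to consist of a single segment $Q_{n}P_{2}$ rather than the two-segment shape that arose before. Throughout I would exploit that $v_{0}>0$, which is the opposite sign to Section~\ref{s211}, and track the effect this has on the slope inequalities.

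For the base case $n=1$, the identity $\mu(P_{0},P_{1})-\mu(P_{0},P_{2})=(v_{t}(\bm{j})-v_{0}q)/(q^{2}-1)\geq 0$ immediately implies that the Newton polygon of $\phi_{t}(X)$ is $P_{0}P_{2}$, so the nonzero roots, in particular $\xi_{\pm 1}$, have valuation $-(v_{2}-v_{0})/(q^{2}-1)$, which matches the claimed formula at $n=1$. I would then verify that both $P_{0}$ and $P_{1}$ lie (weakly) above the segment $Q_{1}P_{2}$ by a direct slope calculation, producing
\[\mu(Q_{1},P_{0})-\mu(Q_{1},P_{2})=\frac{v_{0}(q^{2}-1)}{q^{2}}>0,\quad \mu(Q_{1},P_{1})-\mu(Q_{1},P_{2})=\frac{qv_{t}(\bm{j})-v_{0}}{q^{2}(q+1)}\geq 0,\]
where the second inequality uses $v_{t}(\bm{j})\geq v_{0}q\geq v_{0}/q$.

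For the inductive step, assume the result for $n-1$. Because the Newton polygon of $\phi_{t}(X)-\xi_{\pm(n-1)}$ has the single segment $Q_{n-1}P_{2}$, every root has valuation $-\mu(Q_{n-1},P_{2})=-(v_{2}-v_{t}(\xi_{\pm(n-1)}))/q^{2}$, and a short algebraic simplification of this recurrence using the inductive formula gives $v_{t}(\xi_{\pm n})=-(-v_{0}+v_{2}q^{2n-2})/((q^{2}-1)q^{2n-2})$. I would then check the Newton polygon of $\phi_{t}(X)-\xi_{\pm n}$ by the same method as before: the slope differences work out to
\[\mu(Q_{n},P_{0})-\mu(Q_{n},P_{2})=\frac{v_{0}(q^{2n}-1)}{q^{2n}}>0,\quad \mu(Q_{n},P_{1})-\mu(Q_{n},P_{2})=\frac{q^{2n-1}v_{t}(\bm{j})-v_{0}}{(q+1)q^{2n}}\geq 0,\]
the latter again by the standing hypothesis. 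Thus $P_{0}$ and $P_{1}$ sit above $Q_{n}P_{2}$, so the Newton polygon of $\phi_{t}(X)-\xi_{\pm n}$ has exactly one segment, as claimed.

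The conceptual content is modest; the entire argument is a routine Newton-polygon induction strictly parallel to the preceding lemma. The main potential pitfall is bookkeeping, in particular correctly resolving the recursion for $v_{t}(\xi_{\pm n})$ in closed form and being careful with the inequalities when $v_{0}>0$ (which flips several signs compared to the infinite-prime analysis), but no essentially new obstacle arises.
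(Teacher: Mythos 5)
Your proof is correct and takes essentially the same route as the paper: induction on $n$, the recursion $v_{t}(\xi_{\pm n})=\bigl(v_{t}(\xi_{\pm(n-1)})-v_{2}\bigr)/q^{2}$ coming from the one-segment polygon at the previous stage, and the same two slope comparisons $\mu(Q_{n},P_{0})-\mu(Q_{n},P_{2})=\frac{v_{0}(q^{2n}-1)}{q^{2n}}>0$ and $\mu(Q_{n},P_{1})-\mu(Q_{n},P_{2})=\frac{v_{t}(\bm{j})q^{2n-1}-v_{0}}{(q+1)q^{2n}}$. The only cosmetic difference is that the paper records the latter as strictly positive (which indeed holds, since $v_{t}(\bm{j})\geq v_{0}q$ and $v_{0}>0$ give $v_{t}(\bm{j})q^{2n-1}\geq v_{0}q^{2n}>v_{0}$), whereas your weak inequality already suffices for the single-segment conclusion.
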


\begin{proof}
We prove the lemma by induction on $n.$ 
We first check the Newton polygons of $\phi_{t}(X)-\xi_{\pm 1}$ (i.e., the polynomials $\phi_{t}(X)-\xi_{-1}$ and $\phi_{t}(X)-\xi_{1}$) are $Q_{1}P_{2}$ having exactly one segment.
We have
\[\mu(Q_{1},P_{0})=v_{0}+\frac{-v_{0}+v_{2}}{q^{2}-1},\,\,\mu(Q_{1},P_{1})=\frac{v_{1}+\frac{-v_{0}+v_{2}}{q^{2}-1}}{q-0},\,\,\mu(Q_{1},P_{2})=\frac{v_{2}+\frac{-v_{0}+v_{2}}{q^{2}-1}}{q^{2}-0}.\]
The Newton polygons are determined by the inequalities 
\begin{align}
\mu(Q_{1},P_{0})-\mu(Q_{1},P_{2})&=\frac{v_{0}(q^{2}-1)}{q^{2}}>0,\nonumber\\
\mu(Q_{1},P_{1})-\mu(Q_{1},P_{2})&=\frac{v_{t}(\bm{j})q-v_{0}}{(q+1)q^{2}}>0.\nonumber
\end{align}
Assume (1) for $n-1.$
Then the valuations of $\xi_{\pm n}$ are calculated by $-\mu(Q_{n},P_{2})=-(v_{2}-v_{t}(\xi_{n}))/q^{2}.$
We show that the Newton polygons of $\phi_{t}(X)-\xi_{\pm n}$ are $Q_{n}P_{2}$ having exactly one segment.
We have 
\[\mu(Q_{n},P_{0})=v_{0}-v_{t}(\xi_{\pm n}), \mu(Q_{n},P_{1})=\frac{v_{1}-v_{t}(\xi_{\pm n})}{q-0}, \mu(Q_{n},P_{2})=\frac{v_{2}-v_{t}(\xi_{\pm n})}{q^{2}-0}.\]
The Newton polygons are determined by the inequalities
\begin{align}
\mu(Q_{n},P_{0})-\mu(Q_{n},P_{2})&=\frac{v_{0}(q^{2n}-1)}{q^{2n}}>0,\nonumber\\
\mu(Q_{n},P_{1})-\mu(Q_{n},P_{2})&=\frac{v_{t}(\bm{j})q^{2n-1}-v_{0}}{(q+1)q^{2n}}>0.\nonumber
\end{align}
\end{proof}

Similar to Proposition~\ref{p2111}, the next proposition claims that $\xi_{i}$ for $i\in \mathcal{I}_{n}$ form a basis of $\phi[t^{n}]$ and can be arranged with respect to their valuations.
Its proof is very similar to that of Proposition~\ref{p2111}.
For instance, if $v_{t}(\bm{j})\in [v_{0}/q^{m-1},v_{0}/q^{m-2})$ for a positive integer $m,$ we have for $i\geq 1$ that
\begin{itemize}
\item
$v_{t}(\xi_{-m-(i-1)})>v_{t}(\xi_{i})$ if and only if $v_{t}(\bm{j})<v_{0}/q^{m-2}$;
\item
$v_{t}(\xi_{i})\geq v_{t}(\xi_{-m-i})$ if and only if $v_{t}(\bm{j})\geq v_{0}/q^{m-1}.$
\end{itemize}
We omit the proof.

\begin{proposition}\label{pa2}
\begin{enumerate}[\rm{(}1)]
\item
Assume $0<v_{t}(\bm{j})<v_{0}q.$ 
Let $m$ be the integer such that $v_{t}(\bm{j})\in[v_{0}/q^{m-1},v_{0}/q^{m-2}).$
The roots $\xi_{j}$ for $j\in \mathcal{I}_{n}$ form a basis of $\phi[t^{n}].$ 
If $n\leq m,$ we have
\[v_{t}(\xi_{-1})>v_{t}(\xi_{-2})>\cdots>v_{t}(\xi_{-n})>v_{t}(\xi_{1})>v_{t}(\xi_{2})>\cdots>v_{t}(\xi_{n}).\]
If $n\geq m+1,$ we have 
\begin{align}&v_{t}(\xi_{-1})>v_{t}(\xi_{-2})>\cdots>v_{t}(\xi_{-m})\nonumber\\
>&v_{t}(\xi_{1})\geq v_{t}(\xi_{-m-1})\nonumber\\
>&v_{t}(\xi_{2})\geq v_{t}(\xi_{-m-2})\nonumber\\
>&\cdots\nonumber\\
>&v_{t}(\xi_{n-m})\geq v_{t}(\xi_{-n})\nonumber\\
>&v_{t}(\xi_{n-m+1})>v_{t}(\xi_{n-m+2})>\cdots>v_{t}(\xi_{n}),\nonumber\end{align}
where each equality holds if and only if $v_{t}(\bm{j})=v_{0}/q^{m-1}.$ 

\item
Assume $v_{t}(\bm{j})\leq 0.$ 
The roots $\xi_{j}$ for $j\in \mathcal{I}_{n}$ form a basis of $\phi[t^{n}].$
We have
\begin{align}v_{t}(\xi_{-1})> v_{t}(\xi_{-2})>\cdots>v_{t}(\xi_{-n})>v_{t}(\xi_{n})\geq v_{t}(\xi_{n-1})\geq \cdots\geq v_{t}(\xi_{1})\nonumber,\end{align}
where each equality holds if and only if $v_{t}(\bm{j})=0.$ 

\item 
Assume $v_{t}(\bm{j})\geq v_{0}q.$
The roots $\xi_{j}$ for $j\in \mathcal{I}_{n}$ form a basis of $\phi[t^{n}].$
We have
\[v_{t}(\xi_{-1})=v_{t}(\xi_{1})>v_{t}(\xi_{-2})=v_{t}(\xi_{2})>\cdots>v_{t}(\xi_{-n})=v_{t}(\xi_{n}).\]

\item
A root of $\phi_{t^{n}}(X)$ having valuation $v_{t}(\xi_{i})$ for some $i\in \mathcal{I}_{n}$ is an $\Bbb{F}_{q}$-linear combination of $\xi_{i'}$ such that 
\begin{itemize}
    \item each $\xi_{i'}$ has valuation equal to or larger than $v_{t}(\xi_{i});$
    \item at least one of the coefficients of $\xi_{i'}$ with $v_{t}(\xi_{i'})=v_{t}(\xi_{i})$ is nonzero.
\end{itemize}
\end{enumerate}
\end{proposition}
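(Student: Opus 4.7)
The plan is to argue verbatim as in Proposition~\ref{p2111}. Each of (1), (2), (3) decomposes into the valuation orderings among the $\xi_j$ and the claim that these $2n$ elements form an $\Bbb{F}_q$-basis of $\phi[t^n]$; part (4) is a formal consequence.

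For the orderings, I would substitute the closed-form formulas from the two preceding lemmas and compare. After clearing denominators, each pairwise comparison becomes a single linear inequality between $v_t(\bm{j})$ and some $v_0 q^k$ with $k \in \Bbb{Z}$. The authors' own hint spells this out for the interleaving in case (1) with $n \geq m+1$: $v_t(\xi_{-m-(i-1)}) > v_t(\xi_i)$ is equivalent to $v_t(\bm{j}) < v_0/q^{m-2}$, and $v_t(\xi_i) \geq v_t(\xi_{-m-i})$ to $v_t(\bm{j}) \geq v_0/q^{m-1}$; both are built into the definition of $m$, with equality occurring exactly at $v_t(\bm{j}) = v_0/q^{m-1}$. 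The strict inequalities within each separate sequence $(\xi_{-k})_k$ and $(\xi_k)_k$ are immediate from monotonicity of the formulas in $k$. Cases (2) and (3) involve no interleaving; in (3) the paired equalities $v_t(\xi_{-n}) = v_t(\xi_n)$ are read off at once.

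Next, I would prove the basis claim by induction on $n$ exactly as in Proposition~\ref{p2111}. The case $n = 1$ holds by the construction of $\xi_{\pm 1}$. Assuming $\{\xi_j : j \in \mathcal{I}_{n-1}\}$ is a basis of $\phi[t^{n-1}]$, for $x \in \phi[t^n]$ one has $\phi_t(x) \in \phi[t^{n-1}]$, and writing $\phi_t(x) = \sum_{i=1}^{n-1}(u_{-i} \xi_{-i} + u_i \xi_i)$ with $u_{\pm i} \in \Bbb{F}_q$, the identity $\phi_t(\xi_{\pm(i+1)}) = \xi_{\pm i}$ yields
\[
\phi_t\left(x - \sum_{i=1}^{n-1} u_{-i} \xi_{-(i+1)} - \sum_{i=1}^{n-1} u_i \xi_{i+1}\right) = 0.
\]
Hence the bracketed expression lies in $\phi[t] = \Bbb{F}_q \xi_{-1} \oplus \Bbb{F}_q \xi_1$, which shows that the $2n$ elements $\{\xi_j : j \in \mathcal{I}_n\}$ span the $2n$-dimensional space $\phi[t^n]$ and, by a dimension count, form a basis.

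For (4), given $y \in \phi[t^n]$ with $v_t(y) = v_t(\xi_i)$, expand $y = \sum_j c_j \xi_j$ in the basis just constructed. Any nonzero $c_j$ with $v_t(\xi_j) < v_t(\xi_i)$ would push $v_t(y)$ strictly below $v_t(\xi_i)$ after passing to the graded piece at the minimum valuation occurring in the expansion, where the relevant basis vectors are linearly independent by the basis property; conversely, if every $c_j$ with $v_t(\xi_j) = v_t(\xi_i)$ vanished, then $v_t(y) > v_t(\xi_i)$. Both possibilities contradict the hypothesis. The main source of bookkeeping is the interleaving in case (1) for $n \geq m+1$, where one must correctly merge the two sequences $\xi_{-k}$ and $\xi_k$ and track the boundary equality; once the dictionary between each pairwise comparison and the position of $v_t(\bm{j})$ inside $[v_0/q^{m-1}, v_0/q^{m-2})$ is tabulated, the entire argument is routine.
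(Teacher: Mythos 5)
Your route for (1)--(3) and for the basis claim is exactly the one the paper intends (it omits the proof, pointing to Proposition~\ref{p2111} and to the two tabulated equivalences you quote), and those parts are fine: the pairwise comparisons reduce to the position of $v_{t}(\bm{j})$ relative to powers of $q$ times $v_{0}$, and the induction showing that $\{\xi_{j}:j\in\mathcal{I}_{n}\}$ spans $\phi[t^{n}]$ is verbatim the one in Proposition~\ref{p2111}. (Be slightly careful with the phrase ``strict inequalities within each sequence are immediate from monotonicity'': in case (2) the sequence $v_{t}(\xi_{1})\leq\cdots\leq v_{t}(\xi_{n})$ degenerates to equalities when $v_{t}(\bm{j})=0$, and in case (1) the formula for $v_{t}(\xi_{-k})$ changes regime at $k=m$, so monotonicity must be checked across that break; these are routine but should be said.)

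The genuine gap is in your justification of (4). You dismiss cancellation at the minimal valuation level because ``the relevant basis vectors are linearly independent by the basis property'' in the graded piece. That inference is invalid: $\Bbb{F}_{q}$-linear independence of the $\xi_{j}$ inside $\phi[t^{n}]$ does not imply that their images in $\{x:v_{t}(x)\geq c\}/\{x:v_{t}(x)>c\}$ are independent --- two independent elements can be congruent modulo higher valuation. And repeated valuations genuinely occur here (in (3) one has $v_{t}(\xi_{-k})=v_{t}(\xi_{k})$ for every $k$; in (2) with $v_{t}(\bm{j})=0$ all $v_{t}(\xi_{k})$, $k\geq1$, coincide; in (1) there are the boundary equalities at $v_{t}(\bm{j})=v_{0}/q^{m-1}$), so the statement $v_{t}\bigl(\sum_{j}c_{j}\xi_{j}\bigr)=\min\{v_{t}(\xi_{j}):c_{j}\neq0\}$ is substantive, not formal. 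It does hold, but the argument has to go through the tower: for $n=1$ count the roots on each Newton-polygon segment (the roots of maximal valuation are exactly $\Bbb{F}_{q}^{\times}\cdot\xi_{-1}$ when there are two segments, and all nonzero roots share one valuation when there is one); for the inductive step, a combination $y$ has $\phi_{t}(y)\in\phi[t^{n-1}]$ of valuation known by induction, so $y$ is a root of $\phi_{t}(X)-\phi_{t}(y)$, whose Newton polygon depends only on $v_{t}(\phi_{t}(y))$ and hence bounds $v_{t}(y)$ from above by the appropriate $v_{t}(\xi_{i})$; combined with the ultrametric lower bound this pins $v_{t}(y)$ down. (Equivalently, one can compare dimensions of the filtration subspaces $\{x\in\phi[t^{n}]:v_{t}(x)\geq c\}\cup\{0\}$ with the span of the $\xi_{j}$ of valuation $\geq c$.) With that replacement your proof of (4), and hence of the whole proposition, goes through.
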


\begin{remark}
Let $\pi$ be a prime of $\Bbb{F}_{q}(t)$ and $K$ be the completion of $\Bbb{F}_{q}(t)$ at $\pi.$
For a Drinfeld module $\phi$ over $\Bbb{F}_{q}(t),$ if $\phi$ has good and supersingular reduction at $\pi$ 
\footnote{$\phi$ is said to have good and supersingular reduction at $\pi$ if the reduction of $\phi$ at $\pi$ is identified with a power of the Frobenius. See more information in \cite[3.1]{GP}.}, 
the ramification subgroups and the ramification breaks of the extension $K(\phi[\pi^{n}])/K$ (up to a compositum with an unramified extension of $K$) are worked out by Galateau and Pacheco~\cite[3.5]{GP}.

It would be interesting to consider whether there are similar results for a Drinfeld module over $\Bbb{F}_{p}(t)$ having \text{\it{potentially}} good and supersingular reduction at the prime $\pi.$ 
So can the above $K$ be replaced by its tamely ramified extensions?
This can be a nontrivial problem since Neukirch's generalization of the cyclotomic theory (in the setting of the Lubin-Tate theory) seems to only allow $K$ replaced by its unramified extensions.
\end{remark}

\bibliographystyle{amsalpha}
\bibliography{literature/library}

$\textsc{Center for Innovative Teaching and Learning, }\\\textsc{\,\,\,\,\,\,\,\,Tokyo Institute of Technology, }\\\textsc{\,\,\,\,\,\,\,\,2-12-1, O-okayama, Meguro-ku, Tokyo 152-8550, Japan.}$

\,

$\textsc{Department of Mathematics, }\\\textsc{\,\,\,\,\,\,\,\,New Uzbekistan University,}\\\textsc{\,\,\,\,\,\,\,\,Movarounnahr street 1, Tashkent 100001, Uzbekistan.}$
\end{document}